\author{Jelena~Jovanovi\'c\\ \emph{University of Belgrade, Faculty of Mathematics }}
\title{Omitting unary and affine types}
\date{}
\theoremstyle{plain}
\newtheorem{teorema}{Theorem}[section]
\newtheorem{definicija}[teorema]{Definition}
\newtheorem{tvrdjenje}[teorema]{Proposition}
\newtheorem{fakt}[teorema]{Fact}
\begin{document}
\maketitle
\begin{abstract}
In this paper we examine the possibility of describing omitting types 1 and 2 by two at most ternary terms and any number of linear identities. All possible cases of systems of linear identities on two at most ternary terms are being analyzed, and it is shown that only three of these systems might describe omitting types 1 and 2. However, we do not resolve whether either of them actually describes omitting mentioned types, but only prove that each of them implies this property, so this question is left for further examination.
\end{abstract}
\section{Introduction}
The theory of finite algebras that we base this paper on is called \emph{tame congruence theory}, and is developed by David Hobby and Ralph McKenzie during 1980s. The main idea of the theory  is that we can obtain much information about an algebra and the variety it generates from so--called \emph{local behavior} of the algebra. We shall present only a brief overview on this (taken almost entirely from \cite{nm}), and for more information the reader is referred to \cite{hm}.
\begin{definicija}
Let $\mathbf{A}$ be a finite algebra and $\alpha$ a minimal congruence of $\mathbf{A}$ (i.e. $0_{\mathbf{A}} <  \alpha$ and if $\beta$ is a congruence of $\mathbf{A}$ with $0_{\mathbf{A}} <  \beta \le \alpha $ then $\beta = \alpha$.)
\begin{itemize}
\item An $\alpha$--minimal set of $\mathbf{A}$ is a subset $U$ of $\mathbf{A}$ that satisfies following two conditions:
\begin{itemize}
\item[-]$U = p(\mathbf{A})$ for some unary polynomial $p(x)$ of $\mathbf{A}$ that is not constant on at least one $\alpha$--class 
\item[-] with respect to containment, $U$ is minimal having this property.
\end{itemize}
\item An $\alpha$--neighbourhood (or $\alpha$--trace) of $\mathbf{A}$ is a subset $N$ of $\mathbf{A}$ such that:
\begin{itemize}
\item[-] $N = U \cap (a /_{\alpha})$ for some $\alpha$--minimal set $U$ and $\alpha$--class $a /_{\alpha}$
\item[-] $\vert N \vert > 1$. 
\end{itemize}
\end{itemize} 
\end{definicija}
We can easily see that a given $\alpha$--minimal set $U$ must contain at least one, and possibly more, $\alpha$--neighbourhoods. The union of all $\alpha$--neighbourhoods in $U$ is called the body of $U$, and the remaining elements of $U$ form the tail of $U$. What is important here is that algebra $\mathbf{A}$ induces uniform structures on all its $\alpha$--neighbourhoods, meaning they (the structures induced) all belong to the same of five possible types. Let us now define an induced structure.
\begin{definicija}
Let $\mathbf{A}$ be an algebra and $U \subseteq \mathbf{A}$. The algebra induced by $\mathbf{A}$ on $U$ is the algebra with universe $U$ whose basic operations consist of the restriction to $U$ of all polynomials of $\mathbf{A}$  under which $U$ is closed. We denote this induced algebra by $\mathbf{A} \vert_{U}$. 
\end{definicija}
\begin{teorema}
Let $\mathbf{A}$ be a finite algebra and $\alpha$ a minimal congruence of $\mathbf{A}$.
\begin{itemize}
\item If $U$ and $V$ are $\alpha$--minimal sets then $\mathbf{A} \vert_{U}$ and $\mathbf{A} \vert_{V}$ are isomorphic and in fact there is a polynomial $p(x)$ that maps $U$ bijectively onto $V$.
\item If $N$ and $M$ are $\alpha$--neighbourhoods then $\mathbf{A} \vert_{N}$ and $\mathbf{A} \vert_{M}$ are isomorphic via the restriction of some polynomial of $\mathbf{A}$.
\item If $N$ is $\alpha$--neighbourhood then $\mathbf{A} \vert_{N}$ is polynomially equivalent to one of:
\begin{enumerate}
\item A unary algebra whose basic operations are all permutations (unary type);
\item A one--dimensional vector space over some finite field (affine type);
\item A $2$--element boolean algebra (boolean type);
\item A $2$--element lattice (lattice type);
\item A $2$--element semilattice (semilattice type);
\end{enumerate}
\end{itemize} 
\end{teorema}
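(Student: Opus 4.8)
The plan is to read the statement as three separate assertions and to obtain the first two (uniqueness, up to polynomial isomorphism, of the induced algebra on $\alpha$--minimal sets and on $\alpha$--traces) from a single connectivity argument, then reduce the third (the list of five types) to the classification of finite \emph{minimal algebras} — those in which every unary polynomial is a permutation or a constant. The only external inputs I would lean on are P\'alfy's theorem on minimal algebras of size at least three and Post's description of the clones on a two--element set; everything else is bookkeeping that the minimality of $\alpha$ keeps rigid. The preparatory step is a normalization: a standard reduction shows every $\alpha$--minimal set $U$ is the range of an \emph{idempotent} unary polynomial $e$ of $\mathbf{A}$, the argument iterating a generating polynomial and using the minimality of $\alpha$ to control which iterates still fail to collapse an $\alpha$--class. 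Idempotency then gives the concrete description of $\mathbf{A}|_{U}$: its $n$--ary operations are the maps $(x_{1},\dots,x_{n})\mapsto e\bigl(f(x_{1},\dots,x_{n})\bigr)$ for $f$ an $n$--ary polynomial of $\mathbf{A}$, and it makes $\alpha|_{U}$ behave like a congruence of $\mathbf{A}|_{U}$.

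For the first two bullets, given $\alpha$--minimal sets $U,V$ with idempotent generators $e,f$, I would build a polynomial $g$ of $\mathbf{A}$ with $g(U)=V$ by chaining unary polynomials: since $\alpha$ is minimal it is the congruence generated by any one of its nontrivial pairs, so a nontrivial $\alpha$--pair inside $U$ can be transported by polynomials of $\mathbf{A}$ to any location where $\alpha$ is nontrivial, in particular into $V$. Composing with $f$ and iterating to secure injectivity (here one uses $|U|=|V|$, itself a consequence of minimality) produces such a $g$, and a back--and--forth with a reverse polynomial $g'$ shows that $f\circ g|_{U}$ is a bijection carrying the operations of $\mathbf{A}|_{U}$ onto those of $\mathbf{A}|_{V}$; this is the first bullet. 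For the second, one arranges $g$ to map a chosen trace of $U$ into a trace of $V$ — legitimate because, by the first bullet applied inside a single minimal set together with the observation that distinct traces of $U$ are permuted among themselves by polynomials of $\mathbf{A}$, all traces of $U$ look alike — and then restricts. The care in this part lies entirely in making the chaining respect the induced operations, which is exactly what the idempotent description provides.

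For the third bullet, fix an $\alpha$--trace $N=U\cap(a/\alpha)$, so $\alpha$ restricted to $N$ is the full relation $N\times N$. I would first show $\mathbf{A}|_{N}$ is a minimal algebra: a unary polynomial of $\mathbf{A}|_{N}$ that is neither constant nor a bijection, lifted through $e$ to a polynomial of $\mathbf{A}$, would have range a proper subset of $U$ that still fails to collapse some $\alpha$--class, contradicting $\alpha$--minimality of $U$. Next — the technical heart — I would produce on (the body of) $U$ a binary polynomial of $\mathbf{A}$ whose restriction to $N$ is either a semilattice operation or else satisfies idempotent Mal'cev/minority--type identities, using the body/tail splitting of $U$ and the idempotent $e$. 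Granting this, $\mathbf{A}|_{N}$ is a finite minimal algebra equipped with such an operation, and I would finish by cases on $|N|$: if $|N|\ge 3$, P\'alfy's theorem says $\mathbf{A}|_{N}$ is polynomially equivalent either to a one--dimensional vector space over a finite field (the affine case) or to an algebra all of whose polynomials are essentially unary, which by the extra trace structure forces those unary polynomials to be permutations, the unary type; if $|N|=2$, every algebra on $N$ is minimal and Post's finite list of clones on a two--element set that contain the constants leaves only the polynomial clones of the two--element semilattice, lattice, Boolean algebra, and of a $G$--set, i.e. types $3,4,5$ together with types $1,2$.

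The genuinely hard points are, externally, P\'alfy's theorem, a substantial argument identifying non--essentially--unary minimal algebras with modules, which I would simply cite; and, internally, the construction of the special binary operation on the body of a minimal set, where one must juggle $\alpha$--minimality, the body/tail decomposition, and the idempotent polynomials at once. Determining precisely which identities that operation satisfies — and hence pinning down the correct type — is the step I expect to cost the most effort.
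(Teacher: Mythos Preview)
The paper does not prove this theorem at all: its entire proof reads ``The theorem in this form is given in \cite{nm}, and the proof can be found in \cite{hm}.'' So there is nothing substantive to compare your argument against within this paper; the result is quoted as background from Hobby--McKenzie's monograph.

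That said, the outline you give is essentially the standard Hobby--McKenzie argument that \cite{hm} contains: idempotent realisation of minimal sets, polynomial connectivity via generation of $\alpha$ by any nontrivial pair to get the isomorphisms in the first two bullets, and then minimality of $\mathbf{A}|_{N}$ followed by P\'alfy's theorem for $|N|\ge 3$ and Post's lattice for $|N|=2$. Your sketch is accurate as a high-level roadmap of that proof. One small caution: the step you flag as ``produce on the body of $U$ a binary polynomial \dots whose restriction to $N$ is either a semilattice operation or else satisfies idempotent Mal'cev/minority--type identities'' is not quite how the split goes in the source; the dichotomy is first between essentially unary and not (P\'alfy handles the non--unary, $|N|\ge 3$ case directly, yielding the vector--space conclusion without you needing to manufacture a specific binary operation beforehand), and the semilattice/lattice/Boolean trichotomy only enters in the two--element analysis via Post. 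So your plan is correct in spirit but slightly reorders where the hard work sits; if you were to write it out you would find the binary--operation construction unnecessary for the $|N|\ge 3$ branch.
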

\begin{proof}
The theorem in this form is given in \cite{nm}, and the proof can be found in \cite{hm}.
\end{proof}
The previous theorem allows us to assign a type to each minimal congruence $\alpha$ of an algebra according to the behaviour of the $\alpha$--neighbourhoods (for example, a minimal congruence whose $\alpha$--neighbourhoods are polynomially equivalent to a vector space is said to have affine type or type $2$).

Taking this idea one step further, given a pair of congruences $(\alpha, \beta)$ of $\mathbf{A}$ with $\beta$ covering $\alpha$ (i.e. $\alpha < \beta$ and there are no congruences of $\mathbf{A}$ strictly between the two), one can form the quotient algebra $\mathbf{A}/_{\alpha}$, and then consider the congruence $\beta /_{\alpha} = \{(a/_{\alpha}, b/_{\alpha}):(a,b) \in \beta \}$. Since $\beta$ covers $\alpha$ in the congruence lattice of $\mathbf{A}$,  $\beta /_{\alpha}$ is a minimal congruence of $\mathbf{A}/_{\alpha}$, so it can be assigned one of the five types. In this way we can assign to each covering pair of congruences of $\mathbf{A}$ a type (unary, affine, boolean, lattice, semilattice, or 1, 2, 3, 4, 5 respectively). Therefore, going through all covering pairs of congruences of this algebra we obtain a set of types, so--called typeset of $\mathbf{A}$, denoted by $typ \{\mathbf{A}\}$. Also, for $\mathcal{K}$  a class of algebras, the typeset of $\mathcal{K}$ is defined to be the union of all the typesets of its finite members, denoted by $typ \{\mathcal{K} \}$.

A finite algebra or a class of algebras is said to omit a certain type if that type does not appear in its typeset. For locally finite varieties omitting certain types can be characterized by Maltsev conditions, i.e. by the existence of certain terms that satisfy certain linear identities, and there are quite a few results on this so far. We shall present two of them concerning omitting types 1 and 2.
\begin{definicija}
An n--ary term $t$, for $n > 1$, is a near--unanimity term for an algebra $\mathbf{A}$ if the identities $t(x,x,\dots ,x,y) \approx t(x,x,\dots ,y,x) \approx  \dots  \approx t(x,y,\dots ,x,x) \approx t(y,x,\dots ,x,x) \approx x $ hold in $\mathbf{A}$.
\end{definicija}
\begin{definicija}
An n--ary term $t$, for $n > 1$, is a weak near--unanimity term for an algebra $\mathbf{A}$ if it is idempotent and the identities $t(x,x,\dots ,x,y) \approx t(x,x,\dots ,y,x) \approx  \dots  \approx t(x,y,\dots ,x,x) \approx t(y,x,\dots ,x,x)$ hold in $\mathbf{A}$.
\end{definicija}
\begin{teorema}

A locally finite variety $\mathcal{V}$ omits the unary and affine types (i.e. types 1 and 2) if and only if there is some $N > 0$ such that for all $k > N$, $\mathcal{V}$ has a weak near--unanimity term of arity $k$.\label{th}

\end{teorema}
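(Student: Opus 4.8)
The plan is to prove the two implications of the biconditional separately. The forward direction --- that weak near--unanimity terms of all large arities force $\mathcal{V}$ to omit types $1$ and $2$ --- is comparatively short and uses only Taylor's theorem together with the linear algebra of affine traces. The converse is where the real work lies, and I expect it to dominate any proof.

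\emph{Forward direction.} Suppose $\mathcal{V}$ has a weak near--unanimity term $w_k$ for every $k > N$, and fix some $k \ge 3$. Since $w_k$ is idempotent and the chain $w_k(y,x,\dots,x) \approx w_k(x,y,x,\dots,x) \approx \dots \approx w_k(x,\dots,x,y)$ holds, for each coordinate $i$ the chain yields, by transitivity, an identity placing $y$ in the $i$--th argument on one side and $x$ there on the other; hence $w_k$ is a Taylor term, and by Taylor's theorem for locally finite varieties \cite{hm} the variety $\mathcal{V}$ omits type $1$. Suppose now, toward a contradiction, that $2\in typ\{\mathcal{V}\}$; passing to a quotient we may assume some finite $\mathbf{A}\in\mathcal{V}$ has a minimal congruence $\mu$ of affine type. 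Passing further to the idempotent reduct of $\mathbf{A}$ (which retains every weak near--unanimity term, these being idempotent) and invoking the localization theory of minimal sets recalled above, one isolates a $\mu$--trace on which the induced algebra is polynomially equivalent to a one--dimensional vector space over $GF(p^m)$ for some prime $p$, in such a way that the weak near--unanimity term operations of $\mathcal{V}$ descend to idempotent weak near--unanimity polynomial operations of this module. Any such operation has the form $r_1x_1 + \dots + r_kx_k$ with $r_1 + \dots + r_k = 1$, and the weak near--unanimity identities force $r_1 = \dots = r_k = r$, whence $k\cdot r = 1$ in a ring of characteristic $p$. Choosing $k > N$ divisible by $p$ makes the left side $0\ne 1$, a contradiction; so $\mathcal{V}$ omits type $2$ as well.

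\emph{Converse.} Assume $\mathcal{V}$ omits types $1$ and $2$. For each arity $k$, the existence of a $k$--ary weak near--unanimity term is a finite system of linear identities, hence --- since $\mathcal{V}$ is locally finite --- a property that can be tested on the finite relatively free algebra $\mathbf{F}_{\mathcal{V}}(z_1,\dots,z_k)$: whether it has an element on which the $k$ homomorphisms to $\mathbf{F}_{\mathcal{V}}(x,y)$ that send a single generator to $y$ and the rest to $x$ all agree, and which the homomorphism sending every generator to $x$ maps to $x$. So it suffices to produce such terms for all sufficiently large $k$. Omitting type $1$ already yields, by the companion characterization of omitting type $1$ (a locally finite variety omits type $1$ if and only if it has a weak near--unanimity term of some arity), a weak near--unanimity term of \emph{some} arity $n_0\ge 3$; moreover a ``product'' composition --- substituting a weak near--unanimity term of arity $b$ into each argument of one of arity $a$ --- produces one of arity $ab$, so the set $A$ of attainable arities is infinite and closed under multiplication. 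The remaining and decisive task is to enlarge $A$ to a cofinite set using the extra structure supplied by omitting type $2$: I would extract from the hypothesis, via tame congruence theory in the spirit of the analysis of congruence identities in \cite{hm}, a bounded family of low--arity terms obeying linear identities, and use them to glue weak near--unanimity terms of arities $a$ and $b$ into one of arity $a+b-1$; combined with multiplicative closure and an elementary appeal to the Frobenius coin problem this would force $A$ to contain every integer past some bound $N$.

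The crux --- and the point at which omitting type $2$ is used essentially --- is that such glueing cannot hold unconditionally: in a vector space over a field of characteristic $p$ the only weak near--unanimity terms have arity prime to $p$, so there $A$ omits an entire residue class. Thus the heart of the matter is to show that omitting type $2$ implies the existence of the requisite low--arity terms; equivalently, in contrapositive form, that if $\mathcal{V}$ has no weak near--unanimity term of any arity in some infinite arithmetic progression, then inside a finite member of $\mathcal{V}$ one can reconstruct a trace of a minimal congruence whose induced algebra is polynomially equivalent to a module over a finite ring of characteristic $p$, contradicting the classification of trace types recalled above. Making this reconstruction precise --- and checking that it delivers affine type rather than merely unary type --- is, I expect, the principal obstacle, and the step in which local finiteness is indispensable.
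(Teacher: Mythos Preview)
The paper does not actually prove this theorem: its proof environment contains only ``The proof can be found in \cite{hhm}.'' The result is quoted from Mar\'oti--McKenzie as background, so there is no in-paper argument against which to compare your attempt.

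On the substance: your forward direction is essentially correct and is the standard argument. Two small points. First, passing to the idempotent reduct is unnecessary and not obviously harmless, since types are computed from the polynomial clone, which changes when you pass to a reduct; simply work in $\mathbf{A}$ itself. Second, a term operation of $\mathbf{A}$ need not map an $\alpha$--trace $N$ into itself; compose first with the idempotent polynomial retraction $e$ onto the minimal set, and note that idempotence of $w_k$ ensures that $e\circ w_k$ restricted to $N^k$ is still a weak near--unanimity operation of $\mathbf{A}|_N$. After that your linear-algebra contradiction (choose $k>N$ with $p\mid k$) goes through.

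Your converse is, as you yourself say, only a plan. Multiplicative closure of the set of attainable weak near--unanimity arities is correct, and aiming at a Frobenius-coin endgame is reasonable, but the step you single out as the crux --- producing from the hypothesis of omitting type~$2$ the low-arity terms that allow ``glueing'' of arities --- is precisely the substantial content of \cite{hhm}, and your sketch does not supply it. What you have written for this direction is a statement of intent rather than a proof; the missing piece is essentially the entire difficult half of the theorem.
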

\begin{proof} The proof can be found in \cite{hhm}.
\end{proof}
\begin{teorema}
A locally finite variety $\mathcal{V}$ omits the unary and affine types if and only if it has 3--ary and 4--ary weak near--unanimity terms, $v$ and $w$ respectively, that satisfy the identity $v(y,x,x) \approx w(y,x,x,x)$.
\end{teorema}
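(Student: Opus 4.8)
The plan is to establish the two implications separately, with Theorem~\ref{th} as the decisive tool in both. For the direction ``the terms exist $\Rightarrow$ $\mathcal V$ omits types 1 and 2'' I would argue purely equationally. Weak near--unanimity of $v$ and of $w$ together with the stated identity produces a binary term $p(x,y)$ equal to each of $v(y,x,x),v(x,y,x),v(x,x,y),w(y,x,x,x),\dots,w(x,x,x,y)$, with $p(x,x)\approx x$; the whole point of the identity is that feeding a single occurrence of $y$ into a $v$--block or into a $w$--block, all other entries being $x$, returns the \emph{same} element $p(x,y)$. I would then manufacture higher--arity weak near--unanimity terms by a block construction: if $n=3a+4b$ and a weak near--unanimity term $s$ of arity $a+b$ has already been obtained, split the $n$ variables into $a$ triples and $b$ quadruples, apply $v$ to the triples and $w$ to the quadruples, and apply $s$ to the resulting $a+b$ values. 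Idempotency is immediate, and each symmetry identity holds because the unique block containing the displaced variable evaluates to $p(x,y)$ regardless of its size while every other block evaluates to $x$, so the composite collapses to $s$ with one displaced argument, which is insensitive to its position. Starting from $v$ and $w$ (arities $3$ and $4$) and iterating, the reachable arities include $[9,16]$, then $[27,64]$, and then everything $\ge 81$; the finitely many remaining small gaps are irrelevant, so Theorem~\ref{th} with $N=80$ forces $\mathcal V$ to omit types 1 and 2.

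For the converse ``$\mathcal V$ omits types 1 and 2 $\Rightarrow$ the terms exist'', I would first invoke Theorem~\ref{th} together with the analysis of $\alpha$--minimal sets that underlies it: a variety omitting types 1 and 2 has weak near--unanimity terms of \emph{every} arity $\ge 3$, in particular a ternary one $v_0$ and a $4$--ary one $w_0$. These need not be compatible, so the remaining job is to pass from the pair $(v_0,w_0)$ to a pair $(v,w)$ whose ``one displaced variable'' binary reducts literally coincide. I would attempt this by deriving both $v$ and $w$ from a single weak near--unanimity term of a convenient composite arity (for instance $12$, which the first part already builds out of $v_0$ and $w_0$) via a symmetrising substitution arranged so that both derived operations inherit the same binary reduct; the identity $v(y,x,x)\approx w(y,x,x,x)$ would then be automatic, provided one can simultaneously check that the substituted operations remain idempotent and weakly symmetric.

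I expect this second implication to be the genuine obstacle. The block construction of the first part is routine once the function of the identity is isolated, but there is no purely formal route from large--arity weak near--unanimity terms down to arities $3$ and $4$, and forcing the two small terms to share a binary reduct appears to require either the local structure theory — ruling out an obstruction on a type~$2$ trace, as one sees from the fact that on an affine trace the unique idempotent weakly symmetric operations are the averaging operations, which are never compatible — or a delicate piece of term surgery on a high--arity weak near--unanimity term. This is the step on which I would expect to lean most heavily on the sources cited for Theorem~\ref{th}.
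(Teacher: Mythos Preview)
The paper does not actually prove this theorem: its entire proof environment consists of the sentence ``The proof can be found in \cite{hhhm}.'' So there is no argument in the paper to compare your sketch against; the result is quoted from Kozik--Krokhin--Valeriote--Willard as background.

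That said, a few remarks on your outline. The forward direction (terms $\Rightarrow$ omitting types $1,2$) is essentially sound: the block construction does produce weak near--unanimity terms of every arity in the closure of $\{3,4\}$ under $k\mapsto[3k,4k]$, and this set eventually contains a tail $[81,\infty)$, so Theorem~\ref{th} applies. One small point: you should observe that the outer term $s$ need only be a weak near--unanimity term, not one sharing the binary reduct $p$, so the iteration is legitimate even though the newly built terms have a different binary reduct.

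The converse is where your sketch is honest about its gaps, and rightly so. Theorem~\ref{th} as stated only gives weak near--unanimity terms of all arities beyond some $N$; you assert the stronger fact that they exist in \emph{every} arity $\ge 3$, which is true for locally finite varieties omitting types $1$ and $2$ but is not a consequence of Theorem~\ref{th} alone and in fact lies close to the content of the very theorem you are trying to prove. The subsequent ``term surgery'' step --- arranging for a ternary and a quaternary weak near--unanimity term to share their binary reduct --- is the genuine work in \cite{hhhm}, and your proposal does not supply it. Your instinct that this requires the local structure theory (or an equivalent combinatorial argument on the free algebra) is correct; a purely formal symmetrisation from a single large--arity term does not obviously force the two derived small--arity terms to be weakly symmetric, let alone linked.
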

\begin{proof}
The proof can be found in \cite{hhhm}.
\end{proof}

Therefore, omitting types 1 and 2 for a locally finite variety can be described by linear identities on 3--ary and a 4--ary term, both idempotent. In this paper we examine whether the same can be done by two at most 3--ary idempotent terms. It is sufficient to focus our attention to idempotent terms only, for the reasons that are explained in detail in \cite{hhm}. We shall use two particular algebras for our examination, both of them omitting types 1 and 2. 

\section{ Examples of algebras omitting unary and affine types } \label{section two}
\noindent $\mathbf{Example \  1}$\\

\noindent Let $\mathbf{B}=\langle \ \{\ \!0 \ ,\ 1 \} \ ,\ \land  \  \rangle$ \label{example 1} be the semilattice with two elements (i.e. $\land$ stands for a commutative, associative and idempotent binary operation). This algebra omits types 1 and 2, for it is easy to see that $typ \{\mathbf{B}\}=\{5\}$ ($1_{\mathbf{B}}$ is the minimal congruence, so $B$ is $1_{\mathbf{B}}$--neighbourhood and induced structure is, of course, polynomially equivalent to algebra $\mathbf{B}$).\\

\noindent $\mathbf{Example \  2}$\\

\noindent Let $\mathbf{A}$ be a finite algebra with at least two elements and a single idempotent basic operation $ f(x_1,x_2,x_3)$, which is a ternary near--unanimity term (i.e. a majority term): \begin{equation*} f(x,x,y) \approx f(x,y,x) \approx f(y,x,x) \approx x \end{equation*} In case no arguments are equal, we can define $f$ like this:
\begin{equation*} f(a,b,c) = a,\  \text{for all}\  a,b,c \in \mathbf{A}\   \text{and}\  a\ne b,\  b \ne c,\  c\ne a
\end{equation*}
We shall prove now that algebra $\mathbf{A}$ omits types 1 and 2:

Let $g(x,y,w,z)$ be a $4$--ary term of this algebra defined by: $g(x,y,w,z) \approx f(x,y,f(x,w,z))$, $f$ being the basic operation. It is easy to check that $g$ is a weak near--unanimity term, and the identity $g(y,x,x,x) \approx  f(y,x,x)$ holds, so algebra $\mathbf{A}$ omits unary and affine types according to the theorem 1.6 from above. 
\vspace{0.3 cm}

This algebra has some interesting properties:
\begin{enumerate}
\item every binary term--operation $t$ of $\mathbf{A}$ must satisfy one of these two identities:\label{binary term} \\$t(x,y)\approx x$ \\ $t(x,y)\approx y$;\\
\noindent In other words the only binary term--operations on $\mathbf{A}$  are projections $\pi_1 , \pi_2$;
\begin{proof}
We shall prove the statement by induction on the complexity of the term $t$: 
\begin{itemize}
\item[-] if $t(x,y)$ is a projection the statement holds
\item[-] if $t(x,y)\approx f(t_1(x,y), t_2(x,y), t_3(x,y))$, where $t_1,t_2,t_3$ are less complex binary terms, then these three are projections by the induction hypothesis, therefore at least two of them are equal, so $t$ must be a projection too.
\end{itemize}
\end{proof}
\item every ternary term--operation $p$  of $\mathbf{A}$ satisfies exactly one of the following:\label{ternary term}\\$p(x,y,z)\approx x$\\ $p(x,y,z)\approx y$\\ $p(x,y,z)\approx z$\\ $p(x,x,y)\approx p(x,y,x)\approx p(y,x,x)\approx x$;\\

\vspace{0.1 cm}
\noindent This means $p$ is either one of the projections $\pi_1,\pi_2,\pi_3$  or a majority term--operation, that is there are no other ternary term--operations except for these four kinds.
\begin{proof} 
We prove the second statement also by induction on the complexity of the term--function $p$:
\begin{itemize}
\item if $p(x,y,z)$ is a projection or the basic operation $f(x,y,z)$, the statement holds 
\item if $p(x,y,z)\approx f(p_1(x,y,z), p_2(x,y,z), p_3(x,y,z))$,where $p_1,p_2,p_3$ are less complex ternary terms, then each of these three is either a projection or a  majority term by the induction hypothesis, so we have the following cases:
\begin{itemize} 
\item[-] if at least two of  $p_1,p_2,p_3$ are majority terms, then $p$ is also a majority term;
\item[-] if exactly one of  $p_1,p_2,p_3$ is a majority term and remaining two are the same projection $\pi_j \ \text{for some}\  j \in \{1,2,3\}$ then $p$ is also a projection $\pi_j$;
\item[-] if exactly one of  $p_1,p_2,p_3$ is a majority term and remaining two are projections $\pi_i,\pi_j \ \text{for some}\ \\ i,j \in \{1,2,3,\}\ \  \text{and}\ \  i \ne j$\  then $p$ is a majority term;
\item[-] if the terms $p_1,p_2,p_3$ are projections $\pi_1,\pi_2,\pi_3$ (in whichever order) then $p(x,y,z)$ is $f(x,y,z)$ up to the permutation of variables, which is still a majority term;
\item[-] if the terms $p_1,p_2,p_3$ are projections $\pi_i,\pi_i,\pi_j$ (again in whichever order) $ \text{for some}\  i,j \in \{1,2,3\} \ \text{and}\  i \ne j$  then $p$ is a projection $\pi_i$;
\end{itemize}
\end{itemize}
\end{proof}
\end{enumerate}
  
\vspace{0.2 cm}
Now that we have listed the examples needed, let us notice that any system of (linear) identities possibly describing omitting types 1 and 2 (including any number of terms) must hold in algebras $\mathbf{B}$ and $\mathbf{A}$ from examples 1 and 2 respectively. We shall make use of this fact in the rest of the paper. 

In section 3 we discuss systems of linear identities on a single binary term, two binary terms, a single ternary term and a binary and a ternary term. We prove that none of these systems describes omitting types 1 and 2 ( in fact none of them even implies omitting these two types). 

In section 4 systems on two ternary terms are being discussed, and we prove that there are only three of them that could possibly describe omitting types 1 and 2 ( all three imply omitting these two types). As mentioned in the abstract, we do not resolve whether any of them actually describes this property.  

\section{Systems of linear identities on a single binary term, two binary terms, a single ternary term and a binary and a ternary term}\label{section2}

As we have already mentioned, in this section we discuss systems of linear identities on a single binary term, two binary terms, a single ternary term and a binary and a ternary term, each of these cases being analyzed in a separate subsection. We shall prove here that none of these systems describes omitting types 1 and 2 (in fact none of them even implies omitting these two types).
\subsection {A single binary idempotent term} \label{singlebinary}
\vspace{0.1cm}

Let $t(x,y)$ be an idempotent binary term. If a system of linear identities on $t(x,y)$ (a single identity or more) describes omitting types 1 and 2, it must hold in algebra $\mathbf{A}$ from example 2, which means $t(x,y)$ has to be a projection map in $\mathbf{A}$. So, the system considered must allow $t(x,y)$ to be a projection map and it must not yield a trivial variety (algebra), but such a system holds in every algebra, therefore does not describe omitting types 1 and 2. 

\vspace{0.1cm}

\noindent We can conclude that omitting types 1 and 2 cannot be described by a single binary idempotent term (using any number of identities).

\subsection{Two binary idempotent terms}\label{twobinary}

\vspace{0.1cm}

Let $t(x,y)$ and $s(x,y)$ be idempotent binary terms. If a system of identities on $t(x,y)$ and $s(x,y)$ describes omitting types 1 and 2, it must hold in algebra $\mathbf{A}$ from example 2, which means both $t$ and $s$ have to be projection maps. This means the identities of the system considered must allow both $t$ and $s$ to be projection maps, but these exist in every algebra, so the system cannot describe omitting types 1 and 2. 

\vspace{0.3 cm}

\noindent From the previous we conclude that omitting types 1 and 2 cannot be described by two binary idempotent terms (using any number of identities).

\subsection{A single ternary idempotent term}\label{singleternary}

In this subsection we prove that omitting types 1 and 2 cannot be described by any number of linear identities on a single ternary idempotent term.
\noindent Previously, let us consider a specific reduct of a module that we use in the proof-- a full idempotent reduct of a module over $\mathbb{Z}_5$ (this is an algebraic structure obtained from a module over $\mathbb{Z}_5$ by taking into consideration only the idempotent term--operations of the module and all such term--operations):

In a full idempotent reduct of a module over $\mathbb{Z}_5$ a ternary term $p(x,y,z)$ must satisfy one of the following identities:
\begin{equation}
\begin{array}{lll}
p(x,y,z)\approx x & p(x,y,z)\approx y & p(x,y,z)\approx z\\
p(x,y,z)\approx 4x + 2y & p(x,y,z)\approx 4x + 2z & p(x,y,z)\approx 4y + 2z\\
p(x,y,z)\approx 2x + 4y & p(x,y,z)\approx 2x + 4z & p(x,y,z)\approx 2x + 4z\\
p(x,y,z)\approx 3x + 3z & p(x,y,z)\approx 3x + 3y & p(x,y,z)\approx 3y + 3z \\
p(x,y,z)\approx x + 2y + 3z & p(x,y,z)\approx x + 3y + 2z & p(x,y,z)\approx 2x + y + 3z\\
p(x,y,z)\approx 2x + 3y + z & p(x,y,z)\approx 3x + 2y + z & p(x,y,z)\approx 3x + y + 2z\\
p(x,y,z)\approx 4x + y + z& p(x,y,z)\approx x + y + 4z & p(x,y,z)\approx x + 4y + z\\
p(x,y,z)\approx 2x + 2y + 2z\\
\end{array}
\end{equation}There are no other ternary terms in this reduct.
\vspace{0.1cm}

\noindent Now we can discuss systems of linear identities on a single ternary idempotent term.

\vspace{0.1cm}

\begin{fakt} Suppose a system of identities on $p(x,y,z)$ describes omitting types 1 and 2, and let us denote it by $\sigma$. Then the system $\sigma$ has to hold in algebras $\mathbf{B}$ and $\mathbf{A}$ from examples 1 and 2 respectively, and it must not hold in any full idempotent reduct of a module over a finite ring (theorem 8 in \cite{nm}, or more detailed in \cite{nnm}).
\end{fakt}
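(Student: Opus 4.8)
\noindent The statement records two necessary conditions on a candidate system $\sigma$, and the plan is to read each one off the meaning of ``$\sigma$ describes omitting types 1 and 2'': for \emph{every} locally finite variety $\mathcal{V}$, one has that $\mathcal{V}$ omits types 1 and 2 if and only if there is a ternary idempotent term of $\mathcal{V}$ satisfying all identities of $\sigma$. Equivalently, $\sigma$ is realisable (by a suitable interpretation of $p$) in every algebra lying in a locally finite variety that omits types 1 and 2, and in no algebra lying in a locally finite variety whose typeset contains 1 or 2.

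\noindent For the first part I would argue as follows. Being finite algebras with finitely many basic operations, $\mathbf{B}$ of Example~1 and $\mathbf{A}$ of Example~2 generate locally finite varieties $\mathcal{V}(\mathbf{B})$ and $\mathcal{V}(\mathbf{A})$. In Section~2 it is verified that $\mathbf{B}$ and $\mathbf{A}$ omit types 1 and 2; in fact the identities witnessing this (a semilattice operation on $\mathbf{B}$, and the weak near--unanimity terms exhibited in Example~2 for $\mathbf{A}$) hold throughout the generated varieties, so by the characterisations of omitting these types recalled in the introduction $\mathcal{V}(\mathbf{B})$ and $\mathcal{V}(\mathbf{A})$ omit types 1 and 2 as well. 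Applying the ``only if'' direction of the assumption on $\sigma$, each of these two varieties --- hence each of the algebras $\mathbf{B}$ and $\mathbf{A}$ --- has a ternary idempotent term satisfying $\sigma$; this is exactly to say that $\sigma$ holds in $\mathbf{B}$ and in $\mathbf{A}$.

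\noindent For the second part I would invoke the cited result, Theorem~8 of \cite{nm} (with the details spelled out in \cite{nnm}): a nontrivial full idempotent reduct $\mathbf{R}$ of a module over a finite ring generates a locally finite variety $\mathcal{V}(\mathbf{R})$ whose typeset contains the affine type 2, so $\mathcal{V}(\mathbf{R})$ does not omit types 1 and 2. If $\sigma$ were realisable in $\mathbf{R}$, then by the ``if'' direction of the assumption on $\sigma$ the variety $\mathcal{V}(\mathbf{R})$ would omit types 1 and 2, a contradiction; hence $\sigma$ cannot hold in any such reduct.

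\noindent Once the meaning of ``describes'' is in hand, both directions are purely bookkeeping; the one genuinely external ingredient --- and therefore the crux --- is the second step, which rests entirely on Theorem~8 of \cite{nm}. It is that theorem which supplies both that these module reducts generate locally finite varieties (so that the identity--based characterisation of omitting types 1 and 2 applies to them at all) and that such varieties are forced to display affine behaviour. One small point to keep in mind is that the one--element algebra satisfies every identity, so ``module over a finite ring'' must be understood as a nontrivial one; this is precisely what the exact formulation of Theorem~8 takes care of. Finally, nothing above bears on a converse: the Fact isolates only necessary screening conditions, and whether a system passing all of them actually describes omitting types 1 and 2 is left open.
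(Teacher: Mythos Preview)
Your argument is correct in substance and matches what the paper leaves implicit: the paper does not supply a separate proof for this Fact, merely embedding the citation to Theorem~8 of \cite{nm} for the module--reduct half and taking the $\mathbf{A}$/$\mathbf{B}$ half as evident from Section~\ref{section two}. Your write-up spells out precisely the two directions the reader is expected to fill in.

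One small wording issue worth tightening: your opening paraphrase ``in no algebra lying in a locally finite variety whose typeset contains 1 or 2'' is too strong as stated (the one-element algebra lies in every variety and realises every $\sigma$; more generally an algebra may sit inside many varieties, some omitting types 1 and 2 and some not). What you actually use in the body, and what is correct, is that if $\sigma$ is realised in $\mathbf{R}$ then the variety $\mathcal{V}(\mathbf{R})$ \emph{generated by} $\mathbf{R}$ has a term satisfying $\sigma$ and hence would omit types 1 and 2, while the cited theorem forces type~2 into $\mathrm{typ}\,\mathcal{V}(\mathbf{R})$. Your proof body handles this correctly; only the informal restatement overshoots.
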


\noindent Based on this fact we can state the following:
\begin{itemize}
\item if identities of the system $\sigma$ allow $p$ to be defined as a projection map in algebra $\mathbf{A}$ (any projection map) then $p$ can be defined as a projection map in any algebra, so the system does not describe omitting types 1 and 2. Therefore the identities of the system must have forms that allow $p$ to be a majority term and only a majority term in $\mathbf{A}$.  
\item if there is an identity of the form $p(x,y,z)\approx p(u,v,w)$ in the system $\sigma$, then $\{x,y,z\} = \{u,v,w\}$, i.e. $(u,v,w)$ is a permutation of $(x,y,z)$, for otherwise $p$ could not be a majority term in $\mathbf{A}$.
\item considering identities with less than three variables on either side: if the left hand side of an identity is some of $p(x,x,y), p(x,y,x), p(y,x,x)$, then on the right there has to be either $x$ alone, or one of the terms $p(x,x,y), p(x,y,x), p(y,x,x),p(x,x,z),p(z,x,x),p(x,z,x)$(again for the same reason, $p$ being necessarily a majority term in $\mathbf{A}$). 
\item if the system $\sigma$ contains only identities having variables $x,y,z$ on both sides and/or identities with $x,x,y$ on both sides (that is $x$ occurs twice, $y$ once on both sides) then the system holds in a full idempotent reduct of a module over $\mathbb{Z}_5$ (and therefore does not describe omitting types 1 and 2), for we can define $p$ to be $2x + 2y + 2z$ in this reduct. This means $\sigma$ has to include an identity with $x,x,y$ on the left and $x,x,z$ on the right (up to a permutation of these variables, of course), or $x,x,y$ on the left and $x$ alone on the right. 
\item since the system $\sigma$ has to hold in algebra $\mathbf{B}$ (example 1), from the previous item we can conclude that $p$ has to be a binary term in $\mathbf{B}$ (it cannot be a projection map for the system does not allow that). Now, if the system allows $p(x,y,z)$ to be defined as a binary term in $\mathbf{B}$, i.e. one of the terms $x \land y$, $y \land z$, $x \land z$, then it also allows $p$ to be one of the terms $3x+3y$, $3y+3z$, $3x+3z$ in a full idempotent reduct of a module over $\mathbb{Z}_5$. This means $\sigma$ does not describe omitting types 1 and 2.
\end{itemize}

\noindent This proves that no system on $p(x, y, z)$ satisfies the necessary conditions for describing omitting types 1 and 2 given above ( i.e. to hold in algebras $\mathbf{B}$ and $\mathbf{A}$ and not to hold in any full idempotent reduct of a module over a finite ring).

\subsection{A binary idempotent term and a ternary idempotent term}\label{bin_ter}

\vspace{0.1cm}

\noindent In this subsection we shall discuss whether a system of any number of linear identities on $t(x,y)$ and $p(x,y,z)$ can describe omitting types 1 and 2 ($t$ and $p$ both being idempotent terms). 

\vspace{0.1 cm}
\noindent Suppose we have a system on $t$ and $p$ describing omitting types 1 and 2, and let us denote it by $\tau$. It has to hold in $\mathbf{A}$, so $t$ has to be a projection map, and $p$ either a projection map or a majority term in this algebra (this is proved in section \ref{section two}). If the system $\tau$ allows both $t$ and $p$ to be projections in $\mathbf{A}$, it holds in any algebra ($t$ and $p$ being the same projection maps as in $\mathbf{A}$), so it cannot describe omitting types 1 and 2. Therefore $\tau$ must hold in $\mathbf{A}$ only for $p$ being a majority term (and $t$ a projection map, of course). Further more, algebra $\mathbf{B}$ has to satisfy the system also, so let us analyze possible cases:
\begin{itemize}
\item both $t$ and $p$ can be defined as projection maps in $\mathbf{B}$ -- this is impossible because if this were the case then both terms could be defined as projection maps in $\mathbf{A}$, and we have already excluded that.
\item $t$ can be defined as a projection map and $p$ as a binary term (i.e. one of the $x \land y$, $y \land z$, $x \land z$) in $\mathbf{B}$ -- then we can define $t$ to be the same projection map, and $p$ to be one of the terms $3x+3y$, $3y+3z$, $3x+3z$ in a full idempotent reduct of a module over $\mathbb{Z}_5$, i.e. $t$ and $p$ exist in this reduct, therefore the system $\tau$ does not describe omitting types 1 and 2.
\item $t$ can be defined as a projection map and $p$ as a ternary term in $\mathbf{B}$ (meaning, of course, that we cannot define $p$ as either a projection map or a binary term in $\mathbf{B}$ )-- in this case the system $\tau$ cannot contain an identity on $t$ and $p$, i.e. all the identities are either only on $t$ or only on $p$. Since the identities on $t$ allow $t$ to be a projection map they can be ignored, so $\tau$ describes omitting types 1 and 2 if and only if remaining identities only on $p$ do the same. This is already proved to be impossible (subsection \ref{singleternary}).
\item $t$ can be defined only as a binary term and $p$ as a projection map in $\mathbf{B}$ -- once again $\tau$ cannot include an identity on $t$ and $p$, i.e. all the identities are either only on $t$ or only on $p$. Since $t$ has to be a projection map in $\mathbf{A}$, the identities on $t$ must allow that, which means $t$ can be defined as the same projection map in $\mathbf{B}$. Therefore this case is impossible. 
\item both $t$ and $p$ can be defined as  binary terms in $\mathbf{B}$ (and of course, none of them as a projection map) -- if this is the case it is easily seen that both can be defined as binary terms in a full idempotent reduct of a module over $\mathbb{Z}_5$ (some of the terms $3x+3y$, $3y+3z$, $3x+3z$), so the system $\tau$ does not describe omitting types 1 and 2.
\item $t$ can be defined as a binary term and $p$ as a ternary term in $\mathbf{B}$ (and no other possibilities, as before) -- then there is no identity only on $t$ in $\tau$, since it would have to be this one $t(x,y) \approx t(y,x)$, and it cannot hold in $\mathbf{A}$. Further more, if $t(x,y)$ is on the left, then on the right we have term $p$ with variables $x$ and $y$ only. This allows us to eliminate term $t$ from all the identities except for one, obtaining an equivalent system. Now we can ignore the identity with $t$ (the only one of the form $t(x,y) \approx p(u,v,w)$, where $\{u,v,w\}=\{x,y\}$) and state that the system $\tau$ describes omitting types 1 and 2 if and only if the remaining identities only on $p$ do the same, which is impossible (subsection \ref{singleternary}).   
\end{itemize}

\vspace{0.1cm}
\noindent By this we have proved that omitting types 1 and 2 cannot be described by a binary and a ternary term, both idempotent (using any number of linear identities).
\section{Two ternary idempotent terms}\label{stt}
\vspace{0.1 cm}
In the following section we shall discuss systems of linear identities on two ternary terms, both idempotent, and we shall prove that only three of these systems could possibly describe omitting types 1 and 2 (all three imply omitting these two types). However, we do not resolve whether any of them actually describes this property.

\vspace{0.1 cm}
  
\noindent Let $p(x,y,z)$ and $q(x,y,z)$ be ternary idempotent terms; as before we shall suppose there is a system of linear identities on $p$ and $q$ describing omitting types 1 and 2, and we shall denote it by $\phi$. 

\noindent Let us notice the important fact: if the system $\phi$ has no identity on $p$ and $q$, i.e. all its identities are either only on $p$ or only on $q$, then we can apply the conclusion that we came to in subsection \ref{singleternary}: if $p$ exists in algebras $\mathbf{B}$ and $\mathbf{A}$ from examples 1 and 2, then $p$ also exists in a full idempotent reduct of a module over $\mathbb{Z}_5$, and the same holds for $q$. Therefore the system $\phi$ needs to have at least one identity on $p$ and $q$.

\noindent Regarding the fact that $\phi$ has to hold in algebra $\mathbf{A}$ (i.e. terms $p$ and $q$ have to exist in this algebra) there are three possible cases:
\begin{enumerate}
\item $p$ and $q$ can both be projection maps in algebra $\mathbf{A}$ -- then $\phi$ holds in any algebra ($p$ and $q$ being the same projection maps as in $\mathbf{A}$), so there is no need to analyze this case any further. 
\item the system $\phi$ allows only one of $p$, $q$ to be a projection map in $\mathbf{A}$, and the other term has to be a majority term in this algebra.
\item the system $\phi$ does not allow either of $p$ and $q$ to be a projection map in $\mathbf{A}$, i.e. both are majority terms in this algebra. 
\end{enumerate}
We shall analyze cases 2 and 3 in the following two subsections -- in the subsection \ref{cases12} we deal with case 2, and in subsection \ref{case3} with case 3.
\subsection{ $\mathbf{p}$ and $\mathbf{q}$ are a projection and a majority term in $\mathbf{A}$ } \label{cases12}

\vspace{0.3 cm}

\noindent Suppose the system $\phi$ holds in $\mathbf{A}$ for a projection map and a majority term (case 2 from above). We can assume with no loss of generality that $p$ is $\pi_1$ and $q$ a majority term, therefore these terms satisfy the following in $\mathbf{A}$: 
\begin{equation}
\begin{array}{r}
x\approx p(x,x,y)\approx p(x,y,y)\approx p(x,y,x)\approx q(x,x,y)\approx q(x,y,x)\approx q(y,x,x)
\label{sys}
\end{array}
\end{equation}
\noindent It is easily seen that $p$ and $q$ satisfying the system do not exist in $\mathbf{B}$ (example 1), so \eqref{sys} cannot be the system describing omitting types 1 and 2. We shall try to obtain the system mentioned by eliminating some identities from \eqref{sys}, so let us eliminate the first one:
\begin{equation}
\begin{array}{r}
p(x,x,y)\approx p(x,y,y)\approx p(x,y,x) \approx q(x,x,y)\approx q(x,y,x)\approx q(y,x,x)
\end{array}\label{system1}
\end{equation}
\noindent We shall prove here that system \eqref{system1} implies omitting types 1 and 2, because terms $p$, $q$ do not exist in any full idempotent reduct of a module over a finite ring:
\begin{proof}
If term $p$ is projection map $\pi_1$ in a reduct, then $q$ would have to be at most a binary term, which is impossible. Term $p$ cannot be a binary term in any reduct either (we can see this directly from the identities), so it has to be a ternary term, i.e. $\alpha x + \beta y + \gamma z$ for $\alpha + \beta + \gamma =1$ and none of $\alpha, \beta, \gamma$ is zero. Now, from the first identity we obtain $\alpha x + \beta x + \gamma y = \alpha x + \beta y + \gamma y$, which yields $\alpha  + \beta = \alpha$, i.e. $\beta = 0$. Therefore $p$ does not exist in any reduct over a finite ring, and this completes the proof.
\end{proof}
\vspace{0.1cm}
\noindent So, we have obtained the system that implies omitting types 1 and 2, but it is not minimal -- namely, the system given below, obtained from the previous by eliminating identity $p(x,y,y)\approx p(x,y,x)$, also implies omitting types 1 and 2 (this is proven the same way like the above case). 
\begin{equation}
\left\{
\begin{array}{r}
p(x,x,y)\approx p(x,y,y)\\
p(x,y,x)\approx q(x,x,y) \approx q(x,y,x) \approx q(y,x,x)
\end{array}\right. \label{system2}
\end{equation}
\noindent Moreover, it can be proved that \eqref{system2} is a minimal system with this property:
\begin{proof}
It'll be sufficient to prove that any set of identities which is a proper subset of \eqref{system2} allows us to define $p$ and $q$ in some reduct of a module over a finite ring.

\vspace{0.4 cm}

\noindent If we eliminate the first identity, we obtain the system:
\begin{equation*}
\begin{array}{r}
p(x,y,x)\approx q(x,x,y) \approx q(x,y,x)\approx q(y,x,x)
\end{array} 
\end{equation*}
Now both terms can be defined as $2x + 2y + 2z$ in a reduct over $\mathbb{Z}_5$. So the first identity ($p(x,x,y)\approx p(x,y,y)$) must stay.

\vspace{0.2 cm}

\noindent If the term $p(x,y,x)$ is completely omitted, we have an identity only on $p$ (the first one), and the rest of them are only on $q$, but in this case $p$ can be defined as $\pi_1$ and $q$ as $2x + 2y + 2z$ in a reduct over $\mathbb{Z}_5$. This means we have to keep one of the following identities (i.e. at least one) $p(x,y,x) \approx q(x,x,y), p(x,y,x) \approx q(x,y,x), p(x,y,x) \approx q(y,x,x)$, and we shall consider it the second. Let us go through the cases now:
\begin{itemize}
\item if we have $p(x,y,x) \approx q(x,x,y)$ as the second identity (the first identity being $p(x,x,y)\approx p(x,y,y)$, as explained), obviously we need to add one more identity from the system \eqref{system2} to these two, for if we do not, $p$ and $q$ can both be projection maps (in fact a single identity is all we can add here -- adding any two identities from  \eqref{system2} gives us the whole system \eqref{system2}). We have three options for the third identity: 
\begin{equation*}
\left\{
\begin{array}{r}
p(x,x,y)\approx p(x,y,y)\\
p(x,y,x) \approx q(x,x,y)\\
q(x,y,x)\approx q(y,x,x)
\end{array}\right. 
\end{equation*}
this system allows $p$ to be $\pi_1$ and $q$ to be $3x + 3y$ in a reduct over $\mathbb{Z}_5$;
\begin{equation*}
\left\{
\begin{array}{r}
p(x,x,y)\approx p(x,y,y)\\
p(x,y,x) \approx q(x,x,y)\approx q(y,x,x)
\end{array}\right. 
\end{equation*}
this system allows $p$ to be $\pi_1$ and $q$ to be $\pi_2$ in a reduct over $\mathbb{Z}_5$;
\begin{equation*}
\left\{
\begin{array}{r}
p(x,x,y)\approx p(x,y,y)\\
p(x,y,x) \approx q(x,x,y)\approx q(x,y,x)
\end{array}\right. 
\end{equation*}
this system allows both $p$ and $q$ to be $\pi_1$ in a reduct over $\mathbb{Z}_5$;

So, nothing new can be obtained with the second identity being $p(x,y,x) \approx q(x,x,y)$.

\item if we have $p(x,y,x) \approx q(x,y,x)$ as the second identity, adding a third one from \eqref{system2} may give us the following (again, adding any two identities from  \eqref{system2} gives us the whole system \eqref{system2}):
\begin{equation*}
\left\{
\begin{array}{r}
p(x,x,y)\approx p(x,y,y)\\
p(x,y,x) \approx q(x,y,x)\\
q(x,x,y)\approx q(y,x,x)
\end{array}\right. 
\end{equation*}
this system allows $p$ to be $\pi_1$ and $q$ to be $3x + 3z$ in a reduct over $\mathbb{Z}_5$;
\begin{equation*}
\left\{
\begin{array}{r}
p(x,x,y)\approx p(x,y,y)\\
p(x,y,x) \approx q(x,y,x)\approx q(x,x,y)
\end{array}\right. 
\end{equation*}
this system allows both $p$ and $q$ to be $\pi_1$ in a reduct over $\mathbb{Z}_5$;
\begin{equation*}
\left\{
\begin{array}{r}
p(x,x,y)\approx p(x,y,y)\\
p(x,y,x) \approx q(x,y,x)\approx q(y,x,x)
\end{array}\right. 
\end{equation*}
this system allows $p$ to be $\pi_1$ and $q$ to be $\pi_3$ in a reduct over $\mathbb{Z}_5$;

There are no more cases with $p(x,y,x) \approx q(x,y,x)$ as the second identity, except for the whole system \eqref{system2}.

\item if we have $p(x,y,x) \approx q(y,x,x)$ as the second identity, adding a third one from \eqref{system2} may give us the following (adding two identities gives the whole system, as before):
\begin{equation*}
\left\{
\begin{array}{r}
p(x,x,y)\approx p(x,y,y)\\
p(x,y,x) \approx q(y,x,x)\approx q(x,y,x)
\end{array}\right. 
\end{equation*}
this system allows $p$ to be $\pi_1$ and $q$ to be $\pi_3$ in a reduct over $\mathbb{Z}_5$;
\begin{equation*}
\left\{
\begin{array}{r}
p(x,x,y)\approx p(x,y,y)\\
p(x,y,x) \approx q(y,x,x)\approx q(x,x,y)
\end{array}\right. 
\end{equation*}
this system allows $p$ to be $\pi_1$ and $q$ to be $\pi_2$ in a reduct over $\mathbb{Z}_5$;
\begin{equation*}
\left\{
\begin{array}{r}
p(x,x,y)\approx p(x,y,y)\\
p(x,y,x) \approx q(y,x,x)\\
q(x,x,y)\approx q(x,y,x)
\end{array}\right. 
\end{equation*}
this system allows $p$ to be $\pi_1$ and $q$ to be $3y + 3z$ in a reduct over $\mathbb{Z}_5$;

Once again we have obtained nothing new with $p(x,y,x) \approx q(y,x,x)$ as the second identity. 
\end{itemize}
By this we have proved minimality of the system \eqref{system2}, with respect to implying omitting types 1 and 2.
\end{proof}

\noindent Further more, the following can be proved (simply by analyzing all possible cases): elimination of identities from the system \eqref{system1} gives us either a system equivalent to \eqref{system2}, up to the permutation of variables, or a system that holds in a reduct of a module over some finite ring, which means \eqref{system2} is the only system implying omitting types 1 and 2 obtainable from \eqref{system1} (and a proper subset of \eqref{system1}). We shall not provide the whole proof here, for it is too long, but only analyze two proper subsets of the system \eqref{system1} (the whole proof, however, is provided in the section \ref{peta}):
\begin{itemize}
\item[subset 1] 
\begin{equation*}
\left\{
\begin{array}{r}
p(x,x,y)\approx p(x,y,y)\\
p(x,y,x)\approx q(y,x,x)\approx q(x,y,x) \approx q(x,x,y)
\end{array}\right. 
\end{equation*}
This system implies nonexistence of $p$ (and $q$) in any reduct of a module over a finite ring, so it implies omitting types 1 and 2, but it is equivalent to system \eqref{system2}, and is obtainable from it by a permutation of variables of the term $q$ (we substitute $q(z,x,y)$ for $q(x,y,z)$).
\item[subset 2]
\begin{equation*}
\left\{
\begin{array}{r}
p(x,x,y)\approx p(x,y,x)\\
p(x,y,y)\approx q(y,x,x) \approx q(x,y,x)\approx q(x,x,y)
\end{array}\right. 
\end{equation*}
This system allows $p$ and $q$ to be respectively $4x + y + z$, $2x + 2y + 2z$ in a reduct over $\mathbb{Z}_5$, so it does not describe omitting types 1 and 2.
\end{itemize}

 \vspace{0.3cm}
 
\noindent Up to this point we have started from \eqref{sys}, eliminated the first identity obtaining \eqref{system1}, and then eliminated yet another identity from \eqref{system1} obtaining \eqref{system2}, which is proven to imply omitting types 1 and 2, and to be the only (minimal) system with this property obtainable from \eqref{system1}, up to the permutation of variables. So, the next step would be analyzing what happens if we eliminate an identity other than the first one from \eqref{sys}.

\vspace{0.2 cm}

\noindent First we shall suppose the system obtained includes the identity $p(x,x,y)\approx x$ . It also has to include an identity on  both $p$ and $q$, as explained at the beginning of the current section. So, up to this point we have a subset of the system \eqref{sys} with the first identity being $p(x,x,y)\approx x$, and the second is an identity from \eqref{sys} on $p$ and $q$. There may be more identities from the system \eqref{sys} in this subset, which (the subset) we shall denote by  $\sigma$. We keep in mind $\sigma$ needs to hold in algebra $\mathbf{B}$ from example 1, so let us discuss on possible cases:
\begin{itemize}
\item in the identity on both $p$ and $q$ (the second identity in $\sigma$), left hand side must not be $p(x,x,y)$, for if this is the case, $q$ has to be at most a binary term in algebra $\mathbf{B}$, as well as $p$. This would allow us to define both terms in a reduct over $\mathbb{Z}_{5}$, as projections and/or binary terms (some of the terms $3x + 3y$, $3x + 3z$, $3y + 3z$). So, in the second identity of the system $\sigma$ left hand side is either $p(x,y,y)$ or $p(x,y,x)$.
\item if the terms $x$ and $p(x,x,y)$ do not occur at all in the rest of the system $\sigma$, then all the identities except for the first one include only some of the terms $p(x,y,x)$, $p(x,y,y)$, $q(x,x,y)$, $q(x,y,x)$, $q(y,x,x)$. Therefore we could define $p$ and $q$ to be $4x+2y$ and $2x + 2y + 2z$ respectively in a reduct over $\mathbb{Z}_5$. So, $p(x,x,y)$ or $x$ alone  must occur somewhere in the rest of the system $\sigma$. 
\item if the term $p(x,x,y)$ (or $x$ alone) occurs in an identity on $p$ in the rest of the system, we shall obtain either $p(x,x,y)\approx x \approx p(x,y,y)$, or $p(x,x,y)\approx x \approx p(x,y,x)$. In both cases  $p$ has to be a projection map in $\mathbf{B}$, and because of the identity on both $p$ and $q$ (i.e. the second identity of the system $\sigma$), $q$ is at most a binary term in this algebra, but this allows $p$ and $q$ to be defined in a reduct over $\mathbb{Z}_{5}$, as explained earlier.  
\item if the term $p(x,x,y)$ (or $x$ alone) occurs in an identity on $p$ and $q$ (i.e. on $x$ and $q$) we shall obtain one of the following identities: $p(x,x,y)\approx q(x,x,y) \approx x $, $p(x,x,y)\approx q(y,x,x) \approx x $, $p(x,x,y)\approx q(x,y,x) \approx x $. Again each of them   means $q$ is at most a binary term in $\mathbf{B}$, as well as $p$, so both are definable in a reduct over $\mathbb{Z}_{5}$.   
\end{itemize}
By this it is proved that the system $\sigma$ can not describe omitting types 1 and 2.

\vspace{0.1 cm}

\noindent By elimination of identities from the system \eqref{sys}, we can also obtain a system that includes the identity $p(x,y,x) \approx x$ or $p(x,y,y) \approx x$ (of course, in each case it has to include an identity on both $p$ and $q$). Let us provide a brief overview on these systems:
\begin{itemize}
\item[case 1] Let $\sigma_1$ be a subset of the system \eqref{sys} with the first identity being $p(x,y,x) \approx x$ (the second identity in $\sigma_1$ is an identity on $p$ and $q$ from \eqref{sys}).  We can obtain an equivalent system  by a permutation of variables of the term $p$ (we substitute $p(x,y,z)$ for $p(x,z,y)$ in $\sigma_1$ ) but this is the system of the form $\sigma$, with the first identity being $p(x,x,y)\approx x$,  which we have already proved not to describe omitting types 1 and 2. 
\item [case 2] Let $\sigma_2$ be a subset of the system \eqref{sys} with the first identity being $p(x,y,y) \approx x$ (the second identity in $\sigma_2$ is an identity on $p$ and $q$ from \eqref{sys}). Since the system $\sigma_2$ needs to hold in $\mathbf{B}$, $p$ is to be $\pi_1$ and $q$ at most a binary term in this algebra. This allows both of them to be defined in a reduct over $\mathbb{Z}_{5}$. Therefore $\sigma_2$ cannot describe omitting types 1 and 2.
\end{itemize}

\noindent We can conclude now that none of the systems obtained from \eqref{sys} including either of the identities $x \approx p(x,x,y)$, $x \approx p(x,y,x)$, $x \approx p(x,y,y)$ can describe omitting types 1 and 2.

\vspace{0.3 cm}

\noindent Let us now discuss the subsets of the system \eqref{sys} in which the first identity is one of the following three: $x \approx q(x,x,y)$, $x \approx q(x,y,x)$, $x \approx q(y,x,x)$. By examination of these systems (which is, for being too long, provided in a separate section, i.e.   \ref{sesta}, of this paper), we come to a single system, up to the permutation of variables, that implies (and possibly describes) omitting types 1 and 2:
\begin{equation}
\left\{
\begin{array}{r}
x\approx q(x,y,x)\\
p(x,y,y)\approx p(x,y,x)\\ 
p(x,x,y)\approx q(x,x,y) \approx q(y,x,x)\label{newone}
\end{array}\right.
\end{equation}

\vspace{0.3 cm}

\subsection{both majority terms} \label{case3}
If both $p$ and $q$ have to be majority terms in $\mathbf{A}$ they satisfy the following in this algebra:
\begin{equation}
\begin{array}{r}
x\approx p(x,x,y)\approx p(x,y,x)\approx p(y,x,x)\approx q(y,x,x)\approx q(x,y,x)\approx q(x,x,y)
\label{system4}
\end{array}
\end{equation}
\noindent It is easily seen that $p$ and $q$ satisfying the system cannot exist in $\mathbf{B}$ (example 1), so some identities have to be eliminated. Notice that we have to keep at least one  identity having $x$ on the left, for if we do not, the remaining system holds in a reduct of a module over $\mathbb{Z}_5$ (we can define both $p$, $q$ to be $2x + 2y + 2z$). 

Let us try eliminating the identity $p(x,x,y)\approx p(x,y,x)$. Now we have the following system that holds in $\mathbf{B}$:
\begin{equation}
\left\{
\begin{array}{r}
x\approx p(x,x,y)\\
p(x,y,x)\approx p(y,x,x)\approx q(y,x,x)\approx q(x,y,x)\approx q(x,x,y)
\label{system3}
\end{array}\right.
\end{equation}
\noindent We shall prove now that the system \eqref{system3} implies omitting types 1 and 2. 
\begin{proof}
To prove that the system implies omitting types 1 and 2 it is sufficient to show that $p$, $q$ cannot exist in any reduct of a module over a finite ring. From the identities of the system it is easily seen that $q$ can only be a ternary term in any reduct (if it exists at all) i.e. $\alpha x + \beta y + \gamma z$ for $\alpha + \beta + \gamma =1$ and none of $\alpha, \beta, \gamma$ is zero. From the forth identity we obtain the following: $\alpha y + (\beta + \gamma)x = (\alpha + \gamma )x + \beta y$, and this implies $\alpha = \beta$. Then from the last identity we have: $(\alpha + \gamma)x + \alpha y = 2\alpha x + \gamma y$ , which gives $\alpha = \gamma$ also. Therefore, if $q$ exists in any reduct of a module over a finite ring, it has the form $\alpha x + \alpha y + \alpha z$, for $3\alpha = 1$. Then, from the third identity we have $p(x,y,z)\approx \alpha x + 2\alpha y$ (since $p$ can only be a binary term in any reduct), and therefore the second identity gives $\alpha = 2\alpha$ (this should hold in a finite ring mentioned), which implies $\alpha = 0$, and this is a contradiction. By this we have proved that $q$ (and consequently $p$) cannot exist in any reduct of a module over any finite ring, which means the system  \eqref{system3} implies omitting types 1 and 2.
\end{proof}

\vspace{0.3cm}

\noindent The system \eqref{system3} is also a minimal system implying omitting types 1 and 2, for any proper subset of \eqref{system3} holds in some reduct of a module over a finite ring (for some terms $p$ and $q$). This is proven the same way as minimality of the system \eqref{system2}, i.e. by analyzing all possible cases. The proof is omitted in this subsection, but is provided in the section \ref{sedma}. 

\vspace{0.3 cm}

\noindent If we return to the system \eqref{system4} and analyze other ways to eliminate identities in order to obtain a system that describes omitting types 1 and 2, we come to the following conclusion: if a system obtained (by elimination of identities from \eqref{system4}) holds in algebra $\mathbf{B}$, it is either equivalent to \eqref{system3} up to the permutation of variables, or it holds in a reduct of a module over some finite ring (once again, the examination of all the subsets of the system \eqref{system4} is done in the section \ref{sedma} of this paper).

\vspace{0.1cm}

\noindent We can conclude the following: from the system \eqref{system4} we can obtain a single system, up to the permutation of variables, which possibly describes omitting types 1 and 2 and that is the system \eqref{system3}.

\vspace{0.3 cm}

\noindent By this we have examined all possible forms of a system on $p$ and $q$ regarding the existence of terms in $\mathbf{A}$, and obtained three systems (that is \eqref{system2}, \eqref{newone}, \eqref{system3}), on variables $x$ and $y$, that imply and possibly describe omitting types 1 and 2. Before the conclusion there is yet another question left to examine -- can we obtain anything new from systems of identities on $p$ and $q$ with more than two variables?
 
\vspace{0.3 cm}
\subsection{systems of identities on $\mathbf{p}$ and $\mathbf{q}$ including more than two variables}
\noindent In this subsection we shall prove that nothing new can be obtained from systems of identities on $p$ and $q$ including more than two variables.

\vspace{0.1 cm}

\noindent Suppose we have a system on two ternary idempotent terms $p$ and $q$ that describes omitting types 1 and 2, and suppose there is an identity (or more of them) including more than two variables in this system. We shall denote this system by $\tau$ and discuss possible cases according to the number of variables:
\begin{itemize} 
\item if there is an identity including six variables in $\tau$ , it can only be one of these two identities: $p(x,y,z) \approx p(u,v,w)$, $p(x,y,z) \approx q(u,v,w)$. In both cases we obtain the identity  $x \approx w$, which only holds in a trivial algebra (i.e. a trivial variety), so this case is impossible (assuming that $\tau$ describes omitting types 1 and 2). 
\item if there is an identity including five variables in the system $\tau$, there are two possibilities:
\begin{itemize}
\item the identity mentioned can yield a trivial variety -- this happens there are $x,y,z$ on the left and $u,v$ on the right,  e.g. $p(x,y,z) \approx p(u,u,v)$, $p(x,y,z) \approx q(v,u,v)$, but also in these two cases: $p(x,y,z) \approx p(u,x,v)$, $p(x,y,z) \approx p(u,v,x)$ (if we substitute $x$ for $u$ and $v$ in the latter two identities, we shall obtain $p(x,y,z) \approx x$, which means $p$ has to be the first projection map, but that would give us  $x \approx u$, which yields a trivial variety). 
\item the identity considered can imply that one of the terms $p$ and $q$, or both of them, must be a projection map (maps) -- this happens if the identity is one of the following: $p(x,y,z) \approx p(x,u,v)$, $p(x,y,z) \approx q(x,u,v)$, $p(x,y,z) \approx q(u,x,v)$, $p(x,y,z) \approx q(u,v,x)$. The identity $p(x,y,z) \approx p(x,u,v)$ implies that $p$ has to be a projection map, which exists in any algebra, so in this case we can substitute $x$ (or $y$ or $z$) for $p(x,y,z)$ in the system $\tau$, obtaining a system only on $q$, which cannot describe omitting types 1 and 2 (this has been proven in the subsection \ref{singleternary}). If both terms have to be projection maps then the system $\tau$ obviously can not describe omitting types 1 and 2.
 \end{itemize}
\item if there is an identity including four variables in the system $\tau$, there are three possibilities:
\begin{itemize}
\item the identity mentioned can yield a trivial variety, like in these cases: $p(x,y,z) \approx w$, $p(x,y,z) \approx p(w,w,x)$, $p(x,y,z) \approx p(w,x,w)$, $p(x,y,z) \approx p(w,x,x)$, $p(x,y,y) \approx p(w,w,z)$... Obviously none of these identities cannot occur in the system $\tau$.
\item the identity including four variables can imply that one of the terms has to be a projection map in any algebra, i.e.: $p(x,y,z) \approx p(x,w,w)$, $p(x,y,z) \approx q(w,w,x)$, $p(x,y,z) \approx p(x,w,x)$, $p(x,y,z) \approx q(w,x,x)$, $p(x,y,x) \approx p(z,w,x)$, $p(x,y,x) \approx q(z,w,x)$ ... If this were the case, we could substitute a single variable for one of the terms in $\tau$ (for example, if $p$ has to be the first projection map then we can substitute $x$ for $p(x,y,z)$ in the whole system), obtaining a system on a single ternary term which does not describe omitting types 1 and 2 (proved in the subsection \ref{singleternary}). Therefore none of the identities from above may occur in the system $\tau$.
\item the identity including four variables can imply that one of the terms has to be at most a binary term in any algebra, i.e.: $p(x,y,z) \approx p(w,x,y)$, $p(x,y,z) \approx p(x,w,y)$, $p(x,y,z) \approx q(y,x,w)$...(from the first identity we obtain $p(x,y,z) \approx p(x,x,y)\approx t(x,y)$, for some new binary term $t$). This means we can substitute a binary term $t(x,y)$ for $p(x,y,z)$ in the whole system $\tau$, obtaining a system on a binary and a ternary term ($t$ and $q$ respectively), and this is proven not to describe omitting types 1 and 2 (subsection \ref{bin_ter}). Therefore none of the identities from above may occur in the system $\tau$.
\end{itemize}
\item if there is an identity including three variables in the system $\tau$, these are the possible cases:
\begin{itemize}
\item there are three variables on one side of the identity , and only two of them on the other:$p(x,y,z) \approx p(x,x,y)$, $p(x,y,z) \approx q(x,y,y)$... In these cases we can substitute a new binary term $t(x,y)$ for the term $p$ in the whole system $\tau$, obtaining a system on a binary and a ternary term which cannot describe omitting types 1 and 2 (subsection \ref{bin_ter}). Therefore none of these identities may occur in the system $\tau$. 
\item there are three variables on both sides of the identity: $p(x,y,z) \approx p(x,z,y)$, $p(x,y,z) \approx p(y,x,z)$, $p(x,y,z) \approx p(y,z,x)$... Each of these identities may occur in $\tau$. On the other side, $\tau$ can not include any of the identities $p(x,y,z) \approx q(x,z,y)$, $p(x,y,z) \approx q(y,z,x)$, $p(x,y,z) \approx q(y,x,z)$, etc, for if this were the case we could simply substitute $p$ for $q$ in the whole system obtaining a system only on $p$ which cannot describe omitting types 1 and 2 (subsection \ref{singleternary}). So, $\tau$ can include only some of the identities on $p$ with three variables on both sides.
\item there are two variables on each side of the identity: $p(x,y,y) \approx p(x,z,x)$, $p(x,y,x) \approx p(z,z,x)$... Each of these identities may occur in the system $\tau$. As for the identities on both $p$ and $q$ (such as $p(x,y,y) \approx q(x,z,x)$, $p(x,x,y) \approx q(x,z,z)$...), we can notice the following: in algebra $\mathbf{B}$ from example 1 both terms have to be at most binary, but if the system $\tau$ allows that, both terms can be defined in a reduct over $\mathbb{Z}_5$ (as at most binary also). This is impossible assuming that $\tau$ describes omitting types 1 and 2, so $\tau$ includes no identities on $p$ and $q$ with two variables on each side. 
\end{itemize}

\vspace{0.1 cm}

\noindent So, if $\tau$ includes identities with more than two variables, they can only have three variables, and be of two kinds:
\begin{itemize}
\item identities on $p$ with $x$, $y$, $z$ on both sides
\item identities on $p$ with $x$, $y$ on one side and $x$, $z$ on another
\end{itemize}
\end{itemize}

\vspace{0.1 cm}

\noindent Now, the system $\tau$ has to hold in algebras $\mathbf{A}$ and $\mathbf{B}$, so if we substitute both $x$ and $y$ for $z$ in $\tau$ we shall obtain a system -- consequence (with more identities, but including only $x$ and $y$) that also holds in these two algebras. Let us denote this new system by $\tau_1$, and discuss what happens in a reduct of a module over a finite ring:
\begin{itemize}
\item identities on $p$ with $x$, $y$, $z$ on both sides
\begin{itemize}
\item if we substitute both $x$ and $y$ for $z$ in the identity $p(x,y,z) \approx p(x,z,y)$, we shall obtain the following two identities: $p(x,y,x) \approx p(x,x,y)$, $p(x,y,y) \approx p(x,y,y)$. The second identity is obviously a trivial one, and the first can hold in a reduct of a module over a finite ring if we define $p$ to be the first projection map, or a term $\alpha x+\beta y+ \beta z$, where $\alpha + 2\beta =1$. In both cases the identity $p(x,y,z) \approx p(x,z,y)$ holds in the same reduct.
\item if we substitute both $x$ and $y$ for $z$ in the identity $p(x,y,z) \approx p(z,x,y)$, we shall obtain the following two identities: $p(x,y,x) \approx p(x,x,y)$, $p(x,y,y) \approx p(y,x,y)$. These two hold in a reduct of a module over a finite ring if we define $p$ to be $\alpha x+\alpha y+ \alpha z$ (of course $3 \alpha = 1$), but this means that the identity $p(x,y,z) \approx p(z,x,y)$ also holds in this reduct.
\item the same holds for the identity $p(x,y,z) \approx p(z,y,x)$-- namely, by substituting $x$ and $y$ for $z$ we obtain two identities, $p(x,y,x) \approx p(x,y,x)$, $p(x,y,y) \approx p(y,y,x)$  . If we define $p$ (in any possible way) so that the two identities obtained hold in some reduct of a module over a finite ring, then the identity $p(x,y,z) \approx p(z,y,x)$ also holds in that reduct for the same term $p$. 
\item it is easy to check that the same holds for the identities $p(x,y,z) \approx p(y,z,x)$, $p(x,y,z) \approx p(y,x,z)$.
\end{itemize}
\item identities on $p$ with $x$, $y$ on one side and $x$, $z$ on another 
\begin{itemize}
\item it is easy to see that an identity like that does not hold in any reduct of a module over a finite ring if and only if the variable $z$ on the right side of the identity occurs in all the positions where $x$ is on the left (and perhaps in some more), e.g. $p(x,y,x) \approx p(z,x,z)$, $p(x,y,y) \approx p(z,z,x)$... For all of these identities (that do not hold in any reduct) holds the following: if we substitute both $x$ and $y$ for $z$, we shall obtain two identities such that both of them also cannot hold in any reduct (e.g. from the identity $p(x,y,x) \approx p(z,x,z)$ we would obtain the following two $p(x,y,x) \approx x$ , $p(x,y,x) \approx p(y,x,y)$. Now from the first one we have that $p$ would have to be one of the terms $\pi_1$, $\pi_3$, $\alpha x +\beta z$, but the second identity does not allow either of these).
\item according to the previous item, the following holds: if we have an identity on $p$ with $x$, $y$ on one side and $x$, $z$ on another, and by substituting both $x$ and $y$ for $z$ we obtain two identities that hold in some full idempotent reduct over a finite ring (for some term $p$), then the identity we started with (the one including $z$) also holds in that reduct for the same term $p$.  
\end{itemize}
\end{itemize}

\vspace{0.1 cm}

\noindent We can now state the following: if the system $\tau_1$ (the system -- consequence,  obtained by substituting both $x$ and $y$ for $z$ in the system $\tau$) holds in some reduct of a module over a finite ring then the system $\tau$ also holds in that reduct. Since we want $\tau$ to describe omitting types 1 and 2, by substituting both $x$ and $y$ for $z$ in $\tau$ we need to  obtain a system with only $x$ and $y$ (i.e. $\tau_1$) that holds in $\mathbf{A}$ and $\mathbf{B}$ and does not hold in any reduct of a module. There are only three systems with only $x$ and $y$ that satisfy this, and these are the systems  \eqref{system2}, \eqref{newone}, \eqref{system3} obtained in the subsections \ref{cases12} and \ref{case3}  . Therefore $\tau_1$ has to contain one of these systems or actually be one of them. In either case, the system $\tau$ with three variables is a stronger condition compared to the obtained system $\tau_1$ with $x$ and $y$ only, so there is no need to consider it. In other words, we have just proved that nothing new can be obtained from systems of identities on $p$ and $q$ including more than two variables.

\vspace{0.3 cm}

\noindent Now we can state the following result:
\begin{tvrdjenje} If it is possible to describe omitting types 1 and 2 by two ternary terms $p$ and $q$, it can only be done by one or more of the systems \eqref{system2}, \eqref{newone}, \eqref{system3}.
\end{tvrdjenje}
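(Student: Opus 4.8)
\emph{Proof plan.} The idea is to collect together what Subsections~\ref{cases12} and \ref{case3} and the preceding subsection have already established, since between them they cover every possible shape of a describing system. Suppose $\phi$ is a system of linear identities on two ternary idempotent terms $p$ and $q$ that describes omitting types $1$ and $2$. I would first recall the reduction from the start of Section~\ref{stt}: $\phi$ must contain at least one identity genuinely involving both $p$ and $q$, since otherwise each of $p$, $q$ is governed by a system on a single ternary term, and by the Fact of Subsection~\ref{singleternary} such a system holds in a full idempotent reduct of a module over $\mathbb{Z}_5$, contradicting the requirement that $\phi$ fail in every such reduct. This observation stays in force throughout.

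The second step is to eliminate the extra variables, which is exactly the content of the preceding subsection. An identity of $\phi$ in six variables collapses the variety; an identity in five or four variables either collapses the variety, or forces one of $p$, $q$ to be a projection in every algebra --- whence $\phi$ reduces to a system on a single ternary term, impossible by Subsection~\ref{singleternary} --- or forces one of them to be at most binary in every algebra --- whence $\phi$ reduces to a system on a binary and a ternary term, impossible by Subsection~\ref{bin_ter}; a three-variable identity is harmless only if it is an identity on $p$ alone, with $x,y,z$ on both sides or with $x,y$ on one side and $x,z$ on the other. In that remaining situation, substituting both $x$ and $y$ for $z$ produces a two-variable consequence system $\tau_1$ that still holds in $\mathbf{A}$ and $\mathbf{B}$, and the key point is that whenever $\tau_1$ holds in a reduct of a module over a finite ring so does $\phi$; hence $\tau_1$ too fails in every such reduct and it suffices to classify two-variable systems.

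For two-variable systems I would split according to how $p$ and $q$ can be realised in $\mathbf{A}$: by the classification of the ternary term operations of $\mathbf{A}$ in Section~\ref{section two}, each of $p$, $q$ is either a projection or a majority term in $\mathbf{A}$. If both can be projections, $\phi$ holds in every algebra and describes nothing; so either exactly one of them is a majority term (Case~2) or both are (Case~3). In Case~2, after the harmless normalisation making $p$ the first projection and $q$ a majority term, holding in $\mathbf{A}$ brings $\phi$ within the analysis of subsystems of \eqref{sys}, and Subsection~\ref{cases12} together with Sections~\ref{peta} and \ref{sesta} shows that such a subsystem containing a lone identity $p(x,x,y)\approx x$, $p(x,y,x)\approx x$ or $p(x,y,y)\approx x$ either fails in $\mathbf{B}$ or holds in a reduct over $\mathbb{Z}_5$; that one containing an identity $x\approx q(x,x,y)$, $x\approx q(x,y,x)$ or $x\approx q(y,x,x)$ is, after eliminations, equivalent up to a permutation of variables to \eqref{newone}; and that one with no lone occurrence of $x$ lies inside \eqref{system1}, whose only subsystem implying omitting types $1$ and $2$ is \eqref{system2}, up to a permutation of variables. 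In Case~3, $\phi$ is similarly a subsystem of \eqref{system4}, and Subsection~\ref{case3} together with Section~\ref{sedma} shows its only subsystem holding in $\mathbf{B}$ and implying omitting types $1$ and $2$ is \eqref{system3}, again up to a permutation of variables. Combining the cases, $\phi$ is --- up to renaming variables --- one of \eqref{system2}, \eqref{newone}, \eqref{system3}, or is stronger than one of them, so one of the three already describes omitting types $1$ and $2$, which is the assertion.

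Almost all of this is organised bookkeeping; the genuine content sits inside the individual case analyses, several of them deferred to Sections~\ref{peta}, \ref{sesta} and \ref{sedma}. The main obstacle I expect is exhaustiveness of the case split: a describing $\phi$ need not literally consist of identities drawn from \eqref{sys} or \eqref{system4} --- only hold in $\mathbf{A}$ for the relevant $p$ and $q$ --- so one must be careful that the possibly weaker systems it could be, with identities that are merely consequences of those in \eqref{sys}, cannot slip between the three classified systems and a triviality or a reduct-satisfying system. The two-variable reduction of the preceding subsection is what keeps this manageable: once extra variables are shown never to help, what remains is a finite check over the finitely many systems in $x$ and $y$, and there the division into Cases~2 and~3 and the subsystem lattices of \eqref{sys}, \eqref{system1} and \eqref{system4} are plainly complete.
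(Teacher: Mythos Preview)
Your proposal is correct and follows essentially the same route as the paper. The proposition in the paper is not given a separate formal proof; it is stated as the summary of the entire analysis carried out in Subsections~\ref{cases12}, \ref{case3}, the variable-reduction subsection, and the deferred case checks of Sections~\ref{peta}--\ref{sedma}, and your outline reproduces that structure faithfully --- including the necessity of a mixed $p$/$q$ identity, the elimination of identities with more than two variables via the substitution $z\mapsto x,y$, the trichotomy on the realisation of $p,q$ in $\mathbf{A}$, and the exhaustive subsystem searches yielding \eqref{system2}, \eqref{newone}, \eqref{system3}. Your closing caveat about exhaustiveness (that $\phi$ need only hold in $\mathbf{A}$, not literally be drawn from \eqref{sys} or \eqref{system4}) is well placed: the paper handles it implicitly by the observation that, once restricted to two variables, every identity compatible with the chosen interpretation in $\mathbf{A}$ is, up to the swap $x\leftrightarrow y$, an identity among the terms appearing in \eqref{sys} (respectively \eqref{system4}), so the subsystem lattice really is finite and complete.
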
 
\vspace{1cm}
 
\noindent $\mathbf{Problem:}$ As we have mentioned in the abstract, it is not resolved whether any of these systems actually describes omitting unary and affine types. Finding a counterexample for each of the systems (i.e. a finite algebra omitting types 1 and 2 and not having terms $p$ and $q$) would, of course, lead to conclusion that it is impossible to describe omitting unary and affine types by two ternary terms. On the other hand, perhaps it is possible to prove that any algebra having terms $p$ and $q$ (from any of the systems above) also has terms $v$ and $w$ from theorem 1.3, or that its congruence lattice is meet semi--distributive. In either case it would mean that the system considered describes omitting types 1 and 2.

\vspace{1 cm}

\noindent In the following sections, i.e. \ref{peta}, \ref{sesta} and \ref{sedma}, we shall provide complete proofs of some statements given above. The proofs are given at the end of the paper and in the separate sections for they are too long and based almost entirely on case--analysis.

\section {} \label{peta}

In this section we shall prove the following (this proof is excluded from the subsection \ref{cases12}): 

\vspace{0.5 cm}

\noindent from the system 
\begin{equation}
\begin{array}{r}
p(x,x,y)\approx p(x,y,y)\approx p(x,y,x) \approx q(x,x,y)\approx q(x,y,x)\approx q(y,x,x)
\end{array}\label{jedan}
\end{equation}
(also denoted by (3) in the subsection \ref{cases12} above), which implies omitting types 1 and 2 but is not minimal in that respect, we can obtain only one of the following:
\begin{itemize}
\item [-] the system 
\begin{equation}
\left\{
\begin{array}{r}
p(x,x,y)\approx p(x,y,y)\\
p(x,y,x)\approx q(x,x,y) \approx q(x,y,x) \approx q(y,x,x)
\end{array}\right. \label{dva}
\end{equation} 
(also denoted by (4) in the subsection \ref{cases12} above). This system implies omitting types 1 and 2 and is minimal having this property, which is already proved in \ref{cases12}.
\item [-] a system equivalent to \eqref{dva} (obtained by a permutation of variables)
\item [-] a system that holds in some full idempotent reduct of a module over some finite ring.
\end{itemize}
In other words, up to a permutation of variables, \eqref{dva} is the only system implying omitting types 1 and 2 obtainable from \eqref{jedan} (and a proper subset of \eqref{jedan}).

\vspace{0.2 cm}

\noindent To prove this we need to analyze all proper subsets of the system \eqref{jedan}, and to show that each of them is either equivalent to \eqref{dva} up to a permutation of variables, or can hold in a full idempotent reduct of a module over a finite ring. 
Let us notice the following:
\begin{enumerate}
\item if there is no identity on $p$ and $q$ in a system considered (i.e. a proper subset of \eqref{jedan}), then $p$ and $q$ exist in a full idempotent reduct of a module over $\mathbb{Z}_5$, for we can define $p$ to be $\pi_1$ (the first projection map) and $q$ to be $2x+2y+2z$.  This means there have to be an identity on both $p$ and $q$ in any subset of \eqref{jedan} possibly describing omitting types 1 and 2.
\item if, except for the mentioned identity on $p$ and $q$, term $p$ does not occur in any other identity (i.e. we have an identity on $p$ and $q$ and some identities only on $q$), again both $p$ and $q$ can be defined in a full idempotent reduct of a module over $\mathbb{Z}_5$, $q(x,y,z)$ being $2x+2y+2z$, and $p(x,y,z)$ being one of the terms $4x+2y$, $4x+2z$. So, term $p$ has to occur in at least one more identity -- an identity only on $p$ or on $p$ and $q$.
\end{enumerate}
We shall now go through the cases in the following manner: we shall take an identity on $p$ and $q$ from \eqref{jedan} to be the first one, add an identity only on $p$ from \eqref{jedan} as the second one and discuss what happens. Then we change the second identity for another one only on $p$ from \eqref{jedan} and so on. After we have examined all the cases including the second identity on $p$, we examine the cases with second identity on both $p$ and $q$ (the first identity stays the same all the time). Finally, we change the first identity for another identity on $p$ and $q$ (taken from \eqref{jedan}, of course), and repeat the whole procedure. 
\subsection{} \label{sub1}
In this subsection we shall analyze all proper subsets of the system \eqref{jedan} including   the identity $p(x,y,x) \approx q(x,x,y)$. We shall consider this identity the first in each of these subsets. As explained, now we vary the second identity:
\begin{itemize}
\item the second identity is $p(x,x,y) \approx p(x,y,y)$, i.e. so far we have this system:
\begin{equation}
\left\{
\begin{array}{c}
p(x,y,x) \approx q(x,x,y)\\
p(x,x,y) \approx p(x,y,y) \\
\end{array}\right. \label{b}
\end{equation}
Obviously, both $p$ and $q$ can be projection maps (in any algebra), so we need to add some more identities from \eqref{jedan}. This can be done in a few ways: we can equalize the terms of the two identities given, or equalize either of the terms $q(x,y,x)$, $q(y,x,x)$ with the terms of either of the identities given, or add the identity $q(x,y,x) \approx q(y,x,x)$ to the given two. Let us discuss each possibility:
\begin{itemize}
\item by equalizing the terms of the two identities given, we obtain the following:\begin{equation}
\begin{array}{c}
p(x,y,x) \approx q(x,x,y)\approx
p(x,x,y) \approx p(x,y,y) \label{a}
\end{array}
\end{equation}
Still both $p$ and $q$ can be defined as projections, so more identities from \eqref{jedan} are needed here. We shall discuss each way of adding an identity from \eqref{jedan} to this system: 

If we equalize $q(x,y,x)$ with the terms in  \eqref{a}, we shall obtain \begin{equation*}
\begin{array}{c}
p(x,y,x) \approx q(x,x,y)\approx
p(x,x,y) \approx p(x,y,y)\approx q(x,y,x) 
\end{array}
\end{equation*}
but this still allows both projections, and adding more identities from \eqref{jedan} actually gives us the whole system \eqref{jedan}. 

If we equalize $q(y,x,x)$ with the terms in \eqref{a}, we shall obtain \begin{equation*}
\begin{array}{c}
p(x,y,x) \approx q(x,x,y)\approx
p(x,x,y) \approx p(x,y,y)\approx q(y,x,x) 
\end{array}
\end{equation*}
and for this system holds the same as for the previous. 

If we add the identity $q(x,y,x) \approx q(y,x,x)$ to the system \eqref{a}, we shall  obtain the following: \begin{equation*}
\left\{
\begin{array}{c}
p(x,y,x) \approx q(x,x,y)\approx
p(x,x,y) \approx p(x,y,y)\\
q(x,y,x) \approx q(y,x,x) 
\end{array}\right.
\end{equation*}
This system holds in a full idempotent reduct of a module over $\mathbb{Z}_5$ for $p(x,y,z)$ being the first projection map, and $q(x,y,z)$ being the term $3x+3y$. Adding any new identities from \eqref{jedan} gives us the whole system \eqref{jedan}. 

\noindent By this we have finished analyzing the system \eqref{a}. By adding identities to this system we can obtain either a system that holds in some full idempotent reduct of a module over a finite ring, or the whole system \eqref{jedan}.
\item equalizing the term $q(x,y,x)$ with the terms of the first identity of \eqref{b} gives us the following system:\begin{equation}
\left\{
\begin{array}{c}
p(x,y,x) \approx q(x,x,y)\approx q(x,y,x)\\
p(x,x,y) \approx p(x,y,y) \label{c}
\end{array}\right. 
\end{equation}
Once again both terms can be projections, so more identities from \eqref{jedan} are needed here. We shall discuss each way of adding an identity from \eqref{jedan} to this system:

If we equalize the terms of the identities given  we shall obtain the system \begin{equation*}
\begin{array}{c}
p(x,y,x) \approx q(x,x,y)\approx q(x,y,x)\approx
p(x,x,y) \approx p(x,y,y) 
\end{array}
\end{equation*}
but this still allows both projections, and adding any new identities from \eqref{jedan} gives us \eqref{jedan}.

If we equalize the term $q(y,x,x)$ with the terms of the first identity of the system \eqref{c}, we shall obtain the following\begin{equation*}
\left\{
\begin{array}{c}
p(x,y,x) \approx q(x,x,y)\approx q(x,y,x)\approx q(y,x,x)\\
p(x,x,y) \approx p(x,y,y) 
\end{array}\right. 
\end{equation*} 
but this is exactly the system \eqref{dva}.

If we equalize the term $q(y,x,x)$ with the terms of the second identity of the system \eqref{c}, we shall  obtain the following\begin{equation*}
\left\{
\begin{array}{c}
p(x,y,x) \approx q(x,x,y)\approx q(x,y,x)\\
p(x,x,y) \approx p(x,y,y)\approx q(y,x,x) 
\end{array}\right. 
\end{equation*}
but this system allows both projection maps ($p$ and $q$ being $\pi_3$, $\pi_1$ respectively). Adding any new identities from \eqref{jedan} gives us \eqref{jedan}.

\noindent By this we have finished analyzing the system \eqref{c}. By adding identities to this system we can obtain a system that holds in some full idempotent reduct of a module over a finite ring, or the system \eqref{dva}, or the whole system \eqref{jedan}.
\item equalizing the term $q(x,y,x)$ with the terms of the second identity of \eqref{b} gives us the following system:\begin{equation}
\left\{
\begin{array}{c}
p(x,y,x) \approx q(x,x,y)\\
p(x,x,y) \approx p(x,y,y)\approx q(x,y,x)\label{d}
\end{array}\right. 
\end{equation}
The system allows both projections, so more identities are needed. Let us discuss each way of adding an identity from \eqref{jedan} to this system: 

If we equalize the terms of the identities  given, we shall obtain the system \begin{equation*}
\begin{array}{c}
p(x,y,x) \approx q(x,x,y)\approx
p(x,x,y) \approx p(x,y,y)\approx q(x,y,x) 
\end{array}
\end{equation*}
Still both projections allowed, and by adding any new identity from \eqref{jedan} we obtain \eqref{jedan}. 

If we equalize the term $q(y,x,x)$ with the terms of the first identity of the system \eqref{d}, we shall obtain the following \begin{equation*}
\left\{
\begin{array}{c}
p(x,y,x) \approx q(x,x,y)\approx q(y,x,x)\\
p(x,x,y) \approx p(x,y,y)\approx q(x,y,x)
\end{array}\right. 
\end{equation*}
but this system allows both projections, and by adding any identity we obtain \eqref{jedan}. 

If we equalize the term $q(y,x,x)$ with the terms of the second identity of the system \eqref{d}, we shall  obtain the following \begin{equation*}
\left\{
\begin{array}{c}
p(x,y,x) \approx q(x,x,y)\\
p(x,x,y) \approx p(x,y,y)\approx q(x,y,x)\approx q(y,x,x)
\end{array}\right. 
\end{equation*} 
this system holds in a full idempotent reduct of a module over $\mathbb{Z}_5$, for we can define $p$ and $q$ to be $3x+3z$ and $3x+3y$ respectively in this reduct. By adding identities we can only obtain the whole system \eqref{jedan}.
\noindent By this we have finished analyzing the system \eqref{d}. By adding identities to this system we can obtain either a system that holds in some full idempotent reduct of a module over a finite ring, or the whole system \eqref{jedan}. 
\item equalizing the term $q(y,x,x)$ with the terms of the first identity of \eqref{b} gives us the following system:\begin{equation}
\left\{
\begin{array}{c}
p(x,y,x) \approx q(x,x,y)\approx q(y,x,x)\\
p(x,x,y) \approx p(x,y,y) \\
\end{array}\right. \label{e}
\end{equation}
This system allows both projections, so we need to add more identities from \eqref{jedan}. Let us discuss each way of adding an identity from \eqref{jedan} to this system: 

If we equalize the terms of the  identities given, we shall obtain the system\begin{equation*}
\begin{array}{c}
p(x,y,x) \approx q(x,x,y)\approx q(y,x,x)\approx
p(x,x,y) \approx p(x,y,y) \\
\end{array} 
\end{equation*}
This system also allows both projections, but adding any new identity gives us \eqref{jedan}.

If we equalize the term $q(x,y,x)$ with the terms of the first identity of the system \eqref{e}, we shall obtain the system \eqref{dva}.

If we equalize the term $q(x,y,x)$ with the terms of the second identity of the system \eqref{e}, we shall  obtain the system\begin{equation*}
\left\{
\begin{array}{c}
p(x,y,x) \approx q(x,x,y)\approx q(y,x,x)\\
p(x,x,y) \approx p(x,y,y)\approx q(x,y,x) \\
\end{array}\right. 
\end{equation*} 
which also allows both projections, and by adding identities from \eqref{jedan} we can only obtain \eqref{jedan}.
\noindent By this we have finished analyzing the system \eqref{e}. By adding identities to this system we can obtain either the system \eqref{dva}, or the whole system \eqref{jedan}. 
 
\item  equalizing the term $q(y,x,x)$ with the terms of the second identity of \eqref{b} gives us the following system:\begin{equation}
\left\{
\begin{array}{c}
p(x,y,x) \approx q(x,x,y)\\
p(x,x,y) \approx p(x,y,y)\approx q(y,x,x) \\
\end{array}\right. \label{f}
\end{equation}
The system allows both projections, so more identities are needed. As before, we shall discuss each way of adding an identity from \eqref{jedan} to this system: 

If we equalize the terms of the identities given,  we shall obtain the system \begin{equation*}
\begin{array}{c}
p(x,y,x) \approx q(x,x,y)\approx
p(x,x,y) \approx p(x,y,y)\approx q(y,x,x) \\
\end{array}
\end{equation*}
but this system still allows both projections, and by adding any new identities from \eqref{jedan} we can only obtain the whole system \eqref{jedan}.

If we equalize the term $q(x,y,x)$ with the terms of the first identity of the system \eqref{f}, we shall obtain the system\begin{equation*}
\left\{
\begin{array}{c}
p(x,y,x) \approx q(x,x,y)\approx q(x,y,x)\\
p(x,x,y) \approx p(x,y,y)\approx q(y,x,x) 
\end{array}\right. 
\end{equation*}
which allows both projections, but adding identities from \eqref{jedan} can only give us the whole system \eqref{jedan}.

If we equalize the term $q(x,y,x)$ with the terms of the second identity of the system \eqref{f}, we shall obtain the system\begin{equation*}
\left\{
\begin{array}{c}
p(x,y,x) \approx q(x,x,y)\\
p(x,x,y) \approx p(x,y,y)\approx q(y,x,x)\approx q(x,y,x) 
\end{array}\right. 
\end{equation*}
This system holds in a full idempotent reduct of a module over $\mathbb{Z}_5$, for we can define $p$ and $q$ to be $3x+3z$ and $3x+3y$ respectively in this reduct. By adding identities we can only obtain the whole system \eqref{jedan}.
\noindent By this we have finished analyzing the system \eqref{f}. By adding identities to this system we can obtain either a system that holds in some full idempotent reduct of a module over a finite ring, or the whole system \eqref{jedan}. 

\item adding the identity $q(x,y,x)\approx q(y,x,x)$ to the system \eqref{b} gives us the following system:\begin{equation}
\left\{
\begin{array}{c}
p(x,y,x) \approx q(x,x,y)\\
p(x,x,y) \approx p(x,y,y) \\
q(x,y,x) \approx q(y,x,x)
\end{array}\right. \label{g}
\end{equation} 
This system holds in a full idempotent reduct of a module over $\mathbb{Z}_5$, for we can define $p$ and $q$ to be $3x+3z$ and $3x+3y$ respectively in this reduct. Therefore we need to add some new identities from \eqref{jedan}, and this is done only by equalizing the terms of any two of the three identities given. We shall discuss all possible cases:

If we equalize the terms of the first and the second identity from \eqref{g} we shall obtain the following system:
\begin{equation*}
\left\{
\begin{array}{c}
p(x,y,x) \approx q(x,x,y)\approx 
p(x,x,y) \approx p(x,y,y) \\
q(x,y,x) \approx q(y,x,x)
\end{array}\right. 
\end{equation*}
This system holds in a full idempotent reduct of a module over $\mathbb{Z}_5$, for we can define $p$ and $q$ to be $\pi_1$ and $3x+3y$ respectively in this reduct. This means we have to add more identities from \eqref{jedan}, but that can only give us the whole system \eqref{jedan}.

If we equalize the terms of the first and the third identity from \eqref{g}, we shall obtain the following system:\begin{equation*}
\left\{
\begin{array}{c}
p(x,y,x) \approx q(x,x,y)\approx q(x,y,x) \approx q(y,x,x) \\
p(x,x,y) \approx p(x,y,y) \\
\end{array}\right. 
\end{equation*}
but this is exactly the system \eqref{dva}.

If we equalize the terms of the second and the third identity from \eqref{g} we shall obtain the following system:\begin{equation*}
\left\{
\begin{array}{c}
p(x,y,x) \approx q(x,x,y)\\
p(x,x,y) \approx p(x,y,y) \approx
q(x,y,x) \approx q(y,x,x)
\end{array}\right. 
\end{equation*}
This system holds in a full idempotent reduct of a module over $\mathbb{Z}_5$, for we can define $p$ and $q$ to be $3x+3z$ and $3x+3y$ respectively in this reduct. By adding any identity from \eqref{jedan} we obtain \eqref{jedan}.
\noindent By this we have finished analyzing the system \eqref{g}. By adding identities to this system we can obtain a system that holds in some full idempotent reduct of a module over a finite ring, or the system \eqref{dva}, or the whole system \eqref{jedan}. 
\end{itemize}
Up to this point we have examined all the systems that include identities \eqref{b}, and we have obtained nothing else  but the system \eqref{dva} implying omitting types 1 and 2. We shall now replace the second identity in \eqref{b} for another one on $p$.
\item the second identity is $p(x,x,y) \approx p(x,y,x)$, i.e. we have these identities:\begin{equation}
\begin{array}{c}
p(x,y,x) \approx q(x,x,y)\approx p(x,x,y) 
\end{array}\label{h}
\end{equation} 
The system \eqref{h} obviously allows both projections and we need to add more identities from \eqref{jedan}. This can be done in the following ways: we can equalize each of the terms  $p(x,y,y)$,  $q(x,y,x)$,  $q(y,x,x)$  with the terms of the existing identities, or add a new identity (i.e. one of these three $p(x,y,y) \approx q(x,y,x)$, $p(x,y,y) \approx q(y,x,x)$, $q(x,y,x)\approx q(y,x,x)$) to the existing ones. As before we shall discuss each of these cases:
\begin{itemize}
\item equalizing the term $p(x,y,y)$ with the terms of the identities \eqref{h} gives us the following:\begin{equation}
\begin{array}{c}
p(x,y,x) \approx q(x,x,y)\approx p(x,x,y)\approx p(x,y,y) 
\end{array}\label{i}
\end{equation}
obviously this system allows both projections, so we need to add some more identities from \eqref{jedan}. As before, we shall discuss each way of doing this:

If we equalize the term $q(x,y,x)$ with the terms of \eqref{i}, we shall obtain the system \begin{equation*}
\begin{array}{c}
p(x,y,x) \approx q(x,x,y)\approx p(x,x,y)\approx p(x,y,y)\approx q(x,y,x) 
\end{array}
\end{equation*}
which still allows both projections, but adding any new identity from \eqref{jedan} gives us \eqref{jedan}.

If we equalize the term $q(y,x,x)$ with the terms of \eqref{i}, we shall obtain the system \begin{equation*}
\begin{array}{c}
p(x,y,x) \approx q(x,x,y)\approx p(x,x,y)\approx p(x,y,y)\approx q(y,x,x) 
\end{array}
\end{equation*}
and for this system holds the same as in the previous case.

If we add the identity $q(x,y,x) \approx q(y,x,x)$ to the system \eqref{i}, we shall obtain the system \begin{equation*}
\left\{
\begin{array}{c}
p(x,y,x) \approx q(x,x,y)\approx p(x,x,y)\approx p(x,y,y)\\
q(x,y,x) \approx q(y,x,x) 
\end{array}\right. 
\end{equation*}
This system holds in a full idempotent reduct of a module over $\mathbb{Z}_5$, for we can define $p$ and $q$ to be $\pi_1$ and $3x+3y$ respectively in this reduct. By adding any identity from \eqref{jedan} we obtain \eqref{jedan}.
\noindent By this we have finished analyzing the system \eqref{i}. By adding identities to this system we can obtain either a system that holds in some full idempotent reduct of a module over a finite ring, or the whole system \eqref{jedan}. 

\item equalizing the term $q(x,y,x)$ with the terms of the identities \eqref{h} gives us the following:\begin{equation}
\begin{array}{c}
p(x,y,x) \approx q(x,x,y)\approx p(x,x,y)\approx q(x,y,x) 
\end{array}\label{j}
\end{equation}
obviously this system allows both projections, so we need to add some more identities from \eqref{jedan}. We shall discuss each way of adding an identity to this system: 

If we equalize $p(x,y,y)$ with the terms of the identities \eqref{j}, we shall obtain the system:\begin{equation*}
\begin{array}{c}
p(x,y,x) \approx q(x,x,y)\approx p(x,x,y)\approx q(x,y,x)\approx p(x,y,y) 
\end{array}
\end{equation*}
both projections are still allowed, and adding more identities from \eqref{jedan} can only give us \eqref{jedan}.

If we equalize $q(y,x,x)$ with the terms of the identities \eqref{j}, we shall obtain the system:\begin{equation*}
\begin{array}{c}
p(x,y,x) \approx q(x,x,y)\approx p(x,x,y)\approx q(x,y,x)\approx q(y,x,x) 
\end{array}
\end{equation*}
which holds in a full idempotent reduct of a module over $\mathbb{Z}_5$, for we can define $p$ and $q$ both to be $2x+2y+2z$ in this reduct. As before adding identities can only give us the whole system \eqref{jedan}.

If we add the identity $p(x,y,y) \approx q(y,x,x)$ to the system \eqref{j}, we shall obtain the following system:\begin{equation*}
\left\{
\begin{array}{c}
p(x,y,x) \approx q(x,x,y)\approx p(x,x,y)\approx q(x,y,x)\\
p(x,y,y) \approx q(y,x,x) 
\end{array}\right. 
\end{equation*}
which holds in a full idempotent reduct of a module over $\mathbb{Z}_3$, for we can define $p$ and $q$ both to be $2x+y+z$ in this reduct. As before adding identities can only give us the whole system \eqref{jedan}.
\noindent By this we have finished analyzing the system \eqref{j}. By adding identities to this system we can obtain either a system that holds in some full idempotent reduct of a module over a finite ring, or the whole system \eqref{jedan}. 
\item equalizing the term $q(y,x,x)$ with the terms of the  identities \eqref{h} gives us the following:\begin{equation}
\begin{array}{c}
p(x,y,x) \approx q(x,x,y)\approx p(x,x,y)\approx q(y,x,x) 
\end{array}\label{k}
\end{equation}
obviously this system allows both projections, so we need to add some more identities from \eqref{jedan}. We shall discuss each way of adding an identity to this system:

If we equalize the term $p(x,y,y)$ with the terms of the  identities \eqref{k}, we shall obtain the following system:\begin{equation*}
\begin{array}{c}
p(x,y,x) \approx q(x,x,y)\approx p(x,x,y)\approx q(y,x,x)\approx p(x,y,y)
\end{array}
\end{equation*}
both projections are allowed, and adding identities from \eqref{jedan} can only give us \eqref{jedan}.

If we equalize the term $q(x,y,x)$ with the terms of the identities \eqref{k}, we shall obtain the following system:\begin{equation*}
\begin{array}{c}
p(x,y,x) \approx q(x,x,y)\approx p(x,x,y)\approx q(y,x,x)\approx q(x,y,x)
\end{array}
\end{equation*}
which holds in a full idempotent reduct of a module over $\mathbb{Z}_5$, for we can define $p$ and $q$ both to be $2x+2y+2z$ in this reduct. As before adding identities can only give us the whole system \eqref{jedan}.

If we add the identity $p(x,y,y) \approx q(x,y,x)$ to the system \eqref{k} we shall obtain the following: \begin{equation*}
\left\{
\begin{array}{c}
p(x,y,x) \approx q(x,x,y)\approx p(x,x,y)\approx q(y,x,x)\\
p(x,y,y) \approx q(x,y,x) 
\end{array}\right. 
\end{equation*}
which holds in a full idempotent reduct of a module over $\mathbb{Z}_3$, for we can define $p$ and $q$ to be $2x+y+z$ and $x+2y+z$ respectively in this reduct. As before adding identities can only give us the whole system \eqref{jedan}.
\noindent By this we have finished analyzing the system \eqref{k}. By adding identities to this system we can obtain either a system that holds in some full idempotent reduct of a module over a finite ring, or the whole system \eqref{jedan}. 
\item if we add the identity $p(x,y,y) \approx q(x,y,x)$ to the system \eqref{h} we shall obtain the following system:\begin{equation}
\left\{
\begin{array}{c}
p(x,y,x) \approx q(x,x,y)\approx p(x,x,y)\\
p(x,y,y) \approx q(x,y,x) 
\end{array}\right. \label{l}
\end{equation}
both $p$ and $q$ can be defined as projection maps in any algebra, so we need to add more identities from \eqref{jedan}. We shall discuss each way of adding an identity to this system:

If we equalize the terms of the identities \eqref{l}, we shall obtain the following:\begin{equation*}
\begin{array}{c}
p(x,y,x) \approx q(x,x,y)\approx p(x,x,y)\approx
p(x,y,y) \approx q(x,y,x) 
\end{array} 
\end{equation*}
still both projections are allowed and adding any more identities from \eqref{jedan} gives us \eqref{jedan}.

If we equalize $q(y,x,x)$ with the terms of the first identity of \eqref{l} we shall obtain the following: \begin{equation*}
\left\{
\begin{array}{c}
p(x,y,x) \approx q(x,x,y)\approx p(x,x,y)\approx q(y,x,x)\\
p(x,y,y) \approx q(x,y,x) 
\end{array}\right. 
\end{equation*}
This system holds in a full idempotent reduct of a module over $\mathbb{Z}_3$, for we can define $p$ and $q$ to be $2x+y+z$ and $x+2y+z$ respectively in this reduct. As before adding identities can only give us the whole system \eqref{jedan}.

If we equalize $q(y,x,x)$ with the  terms of the second identity of \eqref{l}, we shall obtain the following: \begin{equation*}
\left\{
\begin{array}{c}
p(x,y,x) \approx q(x,x,y)\approx p(x,x,y)\\
p(x,y,y) \approx q(x,y,x)\approx q(y,x,x) 
\end{array}\right. 
\end{equation*}
This system holds in a full idempotent reduct of a module over $\mathbb{Z}_4$, for we can define $p$ and $q$ to be $3x+y+z$ and $2x+2y+z$ respectively in this reduct. As before adding identities can only give us the whole system \eqref{jedan}.
\noindent By this we have finished analyzing the system \eqref{l}. By adding identities to this system we can obtain either a system that holds in some full idempotent reduct of a module over a finite ring, or the whole system \eqref{jedan}. 
\item if we add the identity $p(x,y,y) \approx q(y,x,x)$ to the system \eqref{h} we shall obtain the following system:\begin{equation}
\left\{
\begin{array}{c}
p(x,y,x) \approx q(x,x,y)\approx p(x,x,y)\\
p(x,y,y) \approx q(y,x,x) 
\end{array}\right. \label{m}
\end{equation}
This system allows both $p$ and $q$ to be projection maps in any algebra, which means we have to add some more identities from \eqref{jedan}. We shall discuss each way of adding an identity to this system:

If we equalize the terms of the identities  \eqref{m}, we shall obtain the following: \begin{equation*}
\begin{array}{c}
p(x,y,x) \approx q(x,x,y)\approx p(x,x,y)\approx
p(x,y,y) \approx q(y,x,x) 
\end{array}
\end{equation*}
Still both terms can be defined as projections, and adding a new identity from \eqref{jedan} gives us the whole system \eqref{jedan}.

If we equalize the term $q(x,y,x)$ with the the terms of the first identity of \eqref{m} we shall obtain the following:\begin{equation*}
\left\{
\begin{array}{c}
p(x,y,x) \approx q(x,x,y)\approx p(x,x,y)\approx q(x,y,x)\\
p(x,y,y) \approx q(y,x,x) 
\end{array}\right. 
\end{equation*}
This system holds in a full idempotent reduct of a module over $\mathbb{Z}_3$, for we can define both $p$ and $q$ to be $2x+y+z$ in this reduct. As before adding identities can only give us the whole system \eqref{jedan}.

If we equalize the term $q(x,y,x)$ with the terms of the second identity of \eqref{m}, we shall obtain the following:\begin{equation*}
\left\{
\begin{array}{c}
p(x,y,x) \approx q(x,x,y)\approx p(x,x,y)\\
p(x,y,y) \approx q(y,x,x)\approx q(x,y,x) 
\end{array}\right. 
\end{equation*}
This system holds in a full idempotent reduct of a module over $\mathbb{Z}_4$, for we can define $p$ and $q$ to be $3x+y+z$ and $2x+2y+z$ respectively in this reduct. As before adding identities can only give us the whole system \eqref{jedan}.
\noindent By this we have finished analyzing the system \eqref{m}. By adding identities to this system we can obtain either a system that holds in some full idempotent reduct of a module over a finite ring, or the whole system \eqref{jedan}. 
\item if we add the identity $q(x,y,x)\approx q(y,x,x)$ to the system \eqref{h} we shall obtain the following system:\begin{equation}
\left\{
\begin{array}{c}
p(x,y,x) \approx q(x,x,y)\approx p(x,x,y)\\
q(x,y,x)\approx q(y,x,x) 
\end{array}\right. \label{n}
\end{equation}
This system holds in a full idempotent reduct of a module over $\mathbb{Z}_5$, for we can define $p$ and $q$ to be $\pi_1$ and $3x+3y$ respectively in this reduct. Therefore we need to add more identities from \eqref{jedan}, so let us discuss each way of doing this:

If we equalize the terms of the identities  \eqref{n}, we shall obtain the following:\begin{equation*}
\begin{array}{c}
p(x,y,x) \approx q(x,x,y)\approx p(x,x,y)\approx
q(x,y,x)\approx q(y,x,x) 
\end{array}
\end{equation*}
This system holds in a full idempotent reduct of a module over $\mathbb{Z}_5$, for we can define both $p$ and $q$ to be $2x+2y+2z$ respectively in this reduct. Adding identities from \eqref{jedan} can only give us the whole system \eqref{jedan}.

If we equalize the term $p(x,y,y)$ with the terms of the first identity of \eqref{n}, we shall obtain the following:\begin{equation*}
\left\{
\begin{array}{c}
p(x,y,x) \approx q(x,x,y)\approx p(x,x,y)\approx p(x,y,y)\\
q(x,y,x)\approx q(y,x,x) 
\end{array}\right. 
\end{equation*}
This system holds in a full idempotent reduct of a module over $\mathbb{Z}_5$, for we can define $p$ and $q$ to be $\pi_1$ and $3x+3y$ respectively in this reduct. Adding any identities from \eqref{jedan} can only give us \eqref{jedan}.

If we equalize the term $p(x,y,y)$ with the terms of the second identity of \eqref{n}, we shall obtain the following:\begin{equation*}
\left\{
\begin{array}{c}
p(x,y,x) \approx q(x,x,y)\approx p(x,x,y)\\
q(x,y,x)\approx q(y,x,x)\approx p(x,y,y) 
\end{array}\right. 
\end{equation*}
This system holds in a full idempotent reduct of a module over $\mathbb{Z}_4$, for we can define $p$ and $q$ to be $3x+y+z$ and $2x+2y+z$ respectively in this reduct. Adding any identities from \eqref{jedan} can only give us \eqref{jedan}.
\noindent By this we have finished analyzing the system \eqref{n}. By adding identities to this system we can obtain either a system that holds in some full idempotent reduct of a module over a finite ring, or the whole system \eqref{jedan}. 
\end{itemize}
By this we have examined all the systems that include identities \eqref{h} and we have obtained no other system but \eqref{dva} implying omitting types 1 and 2. We shall now replace the second identity in \eqref{h} for the remaining one (the third one) on $p$.
\item the second identity is $p(x,y,y) \approx p(x,y,x)$, i.e. we have these identities:\begin{equation}
\begin{array}{c}
p(x,y,x) \approx q(x,x,y)\approx p(x,y,y)
\end{array} \label{o}
\end{equation}
Identities \eqref{o} obviously allow both projection maps, so more identities from the system \eqref{jedan} are needed. For this purpose we can equalize one of the terms $p(x,x,y)$, $q(x,y,x)$, $q(y,x,x)$ with the terms of \eqref{o}, or we can add a new identity to \eqref{o} (i.e. one of these $p(x,x,y) \approx q(x,y,x)$, $p(x,x,y) \approx q(y,x,x)$, $q(x,y,x) \approx q(y,x,x)$). As before we shall discuss each of these cases:
\begin{itemize}
\item if we equalize the term $p(x,x,y)$ with the terms of \eqref{o}, we shall obtain the following system:\begin{equation}
\begin{array}{c}
p(x,y,x) \approx q(x,x,y)\approx p(x,y,y)\approx p(x,x,y)
\end{array}\label{p} 
\end{equation}
Both projections are still allowed, so we need to add some more identities from \eqref{jedan}. Let us discuss each way of doing this:

If we equalize the term $q(x,y,x)$ with the terms of  \eqref{p}, we shall obtain the following:\begin{equation*}
\begin{array}{c}
p(x,y,x) \approx q(x,x,y)\approx p(x,y,y)\approx p(x,x,y)\approx q(x,y,x)
\end{array}
\end{equation*}
This system still allows both projection maps in any algebra, and adding more identities from \eqref{jedan} can only give us the whole system \eqref{jedan}.

If we equalize the term $q(y,x,x)$ with the terms of \eqref{p}, we shall obtain the following:\begin{equation*}
\begin{array}{c}
p(x,y,x) \approx q(x,x,y)\approx p(x,y,y)\approx p(x,x,y)\approx q(y,x,x)
\end{array}
\end{equation*}
For this system holds the same as for the previous.

If we add the identity $q(x,y,x)\approx q(y,x,x)$ to the system \eqref{p}, we shall obtain the following:\begin{equation*}
\left\{
\begin{array}{c}
p(x,y,x) \approx q(x,x,y)\approx p(x,y,y)\approx p(x,x,y)\\
q(x,y,x)\approx q(y,x,x)
\end{array}\right.
\end{equation*}
This system holds in a full idempotent reduct of a module over $\mathbb{Z}_5$, for we can define $p$ and $q$ to be $\pi_1$ and $3x+3y$ respectively in this reduct. Adding any identities from \eqref{jedan} can only give us \eqref{jedan}.
\noindent By this we have finished analyzing the system \eqref{p}. By adding identities to this system we can obtain either a system that holds in some full idempotent reduct of a module over a finite ring, or the whole system \eqref{jedan}. 
\item if we equalize the term $q(x,y,x)$ with the terms of \eqref{o}, we shall obtain the following system:\begin{equation}
\begin{array}{c}
p(x,y,x) \approx q(x,x,y)\approx p(x,y,y)\approx q(x,y,x)
\end{array} \label{r}
\end{equation}
This system allows both $p$ and $q$ to be defined as projection maps in any algebra, so we need to add some more identities from \eqref{jedan}. Let us discuss each way of doing this:

If we equalize the term $p(x,x,y)$ with the terms of \eqref{r}, we shall obtain the following:\begin{equation*}
\begin{array}{c}
p(x,y,x) \approx q(x,x,y)\approx p(x,y,y)\approx q(x,y,x)\approx p(x,x,y)
\end{array} 
\end{equation*}
This system also allows both projections, and adding more identities from \eqref{jedan} may only give us the whole system \eqref{jedan}.

If we equalize the term $q(y,x,x)$ with the terms of \eqref{r}, we shall obtain the following: \begin{equation*}
\begin{array}{c}
p(x,y,x) \approx q(x,x,y)\approx p(x,y,y)\approx q(x,y,x)\approx q(y,x,x)
\end{array} 
\end{equation*}
This system holds in a full idempotent reduct of a module over $\mathbb{Z}_5$, for we can define $p$ and $q$ to be $4x+2y$ and $2x+2y+2z$ respectively in this reduct. Adding any identities from \eqref{jedan} can only give us \eqref{jedan}.

If we add the identity $p(x,x,y) \approx q(y,x,x)$ to the system \eqref{r} we shall obtain the following system:\begin{equation*}
\left\{
\begin{array}{c}
p(x,y,x) \approx q(x,x,y)\approx p(x,y,y)\approx q(x,y,x)\\
p(x,x,y) \approx q(y,x,x)
\end{array}\right. 
\end{equation*}
This system holds in a full idempotent reduct of a module over $\mathbb{Z}_5$, for we can define $p$ and $q$ to be $3x+3y$ and $3y+3z$ respectively in this reduct. Adding any identities from \eqref{jedan} can only give us \eqref{jedan}.
\noindent By this we have finished analyzing the system \eqref{r}. By adding identities to this system we can obtain either a system that holds in some full idempotent reduct of a module over a finite ring, or the whole system \eqref{jedan}. 
\item if we equalize the term $q(y,x,x)$ with the terms of  \eqref{o}, we shall obtain the following system:\begin{equation}
\begin{array}{c}
p(x,y,x) \approx q(x,x,y)\approx p(x,y,y)\approx q(y,x,x)
\end{array} \label{s}
\end{equation}
This system allows both terms to be defined as projections in any algebra, so we shall add some more identities from \eqref{jedan}. Let us discuss each way of doing this: 

If we equalize the term $p(x,x,y)$ with the terms of \eqref{s}, we shall obtain the following:\begin{equation*}
\begin{array}{c}
p(x,y,x) \approx q(x,x,y)\approx p(x,y,y)\approx q(y,x,x)\approx p(x,x,y)
\end{array} 
\end{equation*}
Still both $p$ and $q$ can be projection maps, and adding new identities from \eqref{jedan} can only give us \eqref{jedan}.

If we equalize the term $q(x,y,x)$ with the terms of  \eqref{s}, we shall obtain the following:\begin{equation*}
\begin{array}{c}
p(x,y,x) \approx q(x,x,y)\approx p(x,y,y)\approx q(y,x,x)\approx q(x,y,x)
\end{array} 
\end{equation*}
This system holds in a full idempotent reduct of a module over $\mathbb{Z}_5$, for we can define $p$ and $q$ to be $4x+2y$ and $2x+2y+3z$ respectively in this reduct. Adding any identities from \eqref{jedan} can only give us \eqref{jedan}.

If we add the identity $p(x,x,y)\approx q(x,y,x)$ to the system \eqref{s}, we shall obtain the following:\begin{equation*}
\left\{
\begin{array}{c}
p(x,y,x) \approx q(x,x,y)\approx p(x,y,y)\approx q(y,x,x)\\
p(x,x,y)\approx q(x,y,x)
\end{array}\right.  
\end{equation*}
This system holds in a full idempotent reduct of a module over $\mathbb{Z}_5$, for we can define $p$ and $q$ to be $3x+3y$ and $3x+3z$ respectively in this reduct. Adding any identities from \eqref{jedan} can only give us \eqref{jedan}.
\noindent By this we have finished analyzing the system \eqref{s}. By adding identities to this system we can obtain either a system that holds in some full idempotent reduct of a module over a finite ring, or the whole system \eqref{jedan}. 
\item if we add the identity $p(x,x,y) \approx q(x,y,x)$ to the system \eqref{o} we shall obtain the following system:\begin{equation}
\left\{
\begin{array}{c}
p(x,y,x) \approx q(x,x,y)\approx p(x,y,y)\\
p(x,x,y) \approx q(x,y,x)
\end{array}\right.  \label{t}
\end{equation}
This system allows both projection maps in any algebra, so we need to add some more identities from \eqref{jedan}. Let us discuss each way of doing this:

If we equalize the terms of the identities  \eqref{t}, we shall obtain the following system:\begin{equation*}
\begin{array}{c}
p(x,y,x) \approx q(x,x,y)\approx p(x,y,y)\approx
p(x,x,y) \approx q(x,y,x)
\end{array}  
\end{equation*}
This still allows both projection maps in any algebra, and adding identities from \eqref{jedan} can only give us \eqref{jedan}.

If we equalize the term $q(y,x,x)$ with the terms of the first identity of \eqref{t}, we shall obtain the following:\begin{equation*}
\left\{
\begin{array}{c}
p(x,y,x) \approx q(x,x,y)\approx p(x,y,y)\approx q(y,x,x) \\
p(x,x,y) \approx q(x,y,x)
\end{array}\right. 
\end{equation*}
This system holds in a full idempotent reduct of a module over $\mathbb{Z}_5$, for we can define $p$ and $q$ to be $3x+3y$ and $3x+3z$ respectively in this reduct. Adding any identities from \eqref{jedan} can only give us \eqref{jedan}.

If we equalize the term $q(y,x,x)$ with the terms of the second identity of \eqref{t}, we shall obtain the following:\begin{equation*}
\left\{
\begin{array}{c}
p(x,y,x) \approx q(x,x,y)\approx p(x,y,y)\\
p(x,x,y) \approx q(x,y,x)\approx q(y,x,x)
\end{array}\right.  
\end{equation*}
This system allows both $p$  and $q$ to be defined as projection maps in any algebra ($\pi_2$ and $\pi_3$ respectively), and adding an identity from \eqref{jedan} can only give us \eqref{jedan}. 
\noindent By this we have finished analyzing the system \eqref{t}. By adding identities to this system we can obtain either a system that holds in some full idempotent reduct of a module over a finite ring, or the whole system \eqref{jedan}. 
\item if we add the identity $p(x,x,y) \approx q(y,x,x)$ to the system \eqref{o} we shall obtain the following system:\begin{equation}
\left\{
\begin{array}{c}
p(x,y,x) \approx q(x,x,y)\approx p(x,y,y)\\
p(x,x,y) \approx q(y,x,x)
\end{array} \right. \label{u}
\end{equation}
This system allows both projection maps in any algebra, so we need to add more identities from \eqref{jedan}. Let us discuss each way of doing this:

If we equalize the terms of the identities \eqref{u}, we shall obtain the following system:\begin{equation*}
\left\{
\begin{array}{c}
p(x,y,x) \approx q(x,x,y)\approx p(x,y,y)\approx
p(x,x,y) \approx q(y,x,x)
\end{array} \right. 
\end{equation*}
Still both terms can be defined as projection maps in any algebra, and adding identities from \eqref{jedan} can only give us the whole system \eqref{jedan}.

If we equalize the term $q(x,y,x)$ with the terms of the first identity of \eqref{u}, we shall obtain the following system:\begin{equation*}
\left\{
\begin{array}{c}
p(x,y,x) \approx q(x,x,y)\approx p(x,y,y)\approx q(x,y,x)\\
p(x,x,y) \approx q(y,x,x)
\end{array} \right. 
\end{equation*}
This system holds in a full idempotent reduct of a module over $\mathbb{Z}_5$, for we can define $p$ and $q$ to be $3x+3y$ and $3y+3z$ respectively in this reduct. Adding any identities from \eqref{jedan} can only give us \eqref{jedan}.

If we equalize the term $q(x,y,x)$ with the terms of the second identity of \eqref{u}, we shall obtain the following system:\begin{equation*}
\left\{
\begin{array}{c}
p(x,y,x) \approx q(x,x,y)\approx p(x,y,y)\\
p(x,x,y) \approx q(y,x,x)\approx q(x,y,x)
\end{array} \right. 
\end{equation*}
This system allows both terms to be defined as projection maps ($p$ and $q$ being $\pi_2$ and $\pi_3$ respectively),and adding any more identities from \eqref{jedan} can only give us the whole system \eqref{jedan}.
\noindent By this we have finished analyzing the system \eqref{u}. By adding identities to this system we can obtain either a system that holds in some full idempotent reduct of a module over a finite ring, or the whole system \eqref{jedan}.  

\item if we add the identity $q(x,y,x) \approx q(y,x,x)$ to the system \eqref{o}, we shall obtain the following:\begin{equation}
\left\{
\begin{array}{c}
p(x,y,x) \approx q(x,x,y)\approx p(x,y,y)\\
q(x,y,x) \approx q(y,x,x)
\end{array} \right.   \label{v}
\end{equation}
This system holds in a full idempotent reduct of a module over $\mathbb{Z}_5$, for we can define $p$ and $q$ to be $\pi_1$ and $3x+3y$ respectively in this reduct. We need to add more identities from \eqref{jedan}. Let us discuss each way of doing this:

If we equalize the terms of the identities  \eqref{v}, we shall obtain the following:\begin{equation*}
\begin{array}{c}
p(x,y,x) \approx q(x,x,y)\approx p(x,y,y)\approx
q(x,y,x) \approx q(y,x,x)
\end{array}  
\end{equation*}
This system holds in a full idempotent reduct of a module over $\mathbb{Z}_5$, for we can define $p$ and $q$ to be $4x+2y$ and $2x+2y+2z$ respectively in this reduct. Adding identities from \eqref{jedan} gives us nothing but \eqref{jedan}. 

If we equalize the term $p(x,x,y)$ with the terms of the first identity of \eqref{v}, we shall obtain the following: \begin{equation*}
\left\{
\begin{array}{c}
p(x,y,x) \approx q(x,x,y)\approx p(x,y,y)\approx p(x,x,y)\\
q(x,y,x) \approx q(y,x,x)
\end{array} \right.   
\end{equation*}
This system holds in a full idempotent reduct of a module over $\mathbb{Z}_5$, for we can define $p$ and $q$ to be $\pi_1$ and $3x+3y$ respectively in this reduct. Adding identities from \eqref{jedan} gives us nothing but \eqref{jedan}. 

If we equalize the term $p(x,x,y)$ with the terms of the second identity of \eqref{v}, we shall obtain the following:\begin{equation*}
\left\{
\begin{array}{c}
p(x,y,x) \approx q(x,x,y)\approx p(x,y,y)\\
q(x,y,x) \approx q(y,x,x)\approx p(x,x,y)
\end{array} \right.   
\end{equation*}
This system allows both terms to be defined as projection maps ($p$ and $q$ being $\pi_2$ and $\pi_3$ respectively), and adding any more identities from \eqref{jedan} can only give us the whole system \eqref{jedan}.
\noindent By this we have finished analyzing the system \eqref{v}. By adding identities to this system we can obtain either a system that holds in some full idempotent reduct of a module over a finite ring, or the whole system \eqref{jedan}.  
\end{itemize}
By this we have examined all the systems that include identities \eqref{o} and we have obtained no other systems but \eqref{dva} implying omitting types 1 and 2. 
\end{itemize}
\noindent Let us notice here that, by this point, we have examined all the proper subsets of \eqref{jedan} having the first identity $p(x,y,x) \approx q(x,x,y)$ and the second identity only on $p$. 

\vspace{0.3 cm}

\noindent Before we start analyzing systems having the first identity $p(x,y,x) \approx q(x,x,y)$ and the second on $p$ and $q$ let us notice the following facts:
\begin{enumerate}
\item If the left hand side of the second identity is the same as one of the first (i.e. $p(x,y,x)$), we could substitute the second identity for an identity only on $q$ (simply by equalizing right hand sides), and that would make for an equivalent system. As said before, the term $p$ has to occur in at least one more identity besides the first one in the system considered (i.e. a proper subset of \eqref{jedan} possibly describing omitting types 1 and 2), which means there has to be an identity including $p(x,x,y)$ or $p(x,y,y)$ in the system. That identity may be considered the second, instead of the identity with $p(x,y,x)$ on the left, so we may assume that left hand side of the second identity is not $p(x,y,x)$.
\item If the right hand side of the second identity is the same as one of the first (i.e. $q(x,x,y)$), we could substitute the second identity with the one only on $p$ (by equalizing left hand sides), obtaining an equivalent system. All such cases are already examined, so we may assume that the second identity does not have $q(x,x,y)$ on the right.
\item According to the previous two items, it is sufficient to examine second identities on $p$ and $q$ that include neither $p(x,y,x)$ on the left, nor $q(x,x,y)$ on the right. 
\end{enumerate}
\begin{itemize}
\item the second identity is $p(x,x,y)\approx q(x,y,x)$, i.e. we have these two identities:\begin{equation}
\left\{
\begin{array}{c}
p(x,y,x) \approx q(x,x,y)\\
p(x,x,y)\approx q(x,y,x)
\end{array} \right.\label{w}   
\end{equation}
Identities \eqref{w} obviously allow both projection maps, so we need to add more identities from \eqref{jedan}. This can be done in the following ways: we can equalize the terms of the existing identities, or equalize each of the terms $p(x,y,y)$, $q(y,x,x)$ with the terms of each of the two identities, or add the identity $p(x,y,y) \approx q(y,x,x)$ to the system \eqref{w}. Let us discuss all these cases:
\begin{itemize}
\item if we equalize the terms of the  identities \eqref{w}, we shall obtain the following:\begin{equation}
\begin{array}{c}
p(x,y,x) \approx q(x,x,y)\approx
p(x,x,y)\approx q(x,y,x)\label{y}
\end{array}    
\end{equation}
Both projection maps are still allowed by his system, so we need to add some more identities from \eqref{jedan}. Let us discuss each way of doing this:

If we equalize the term $p(x,y,y)$ with the terms of \eqref{y}, we shall obtain the following:\begin{equation*}
\begin{array}{c}
p(x,y,x) \approx q(x,x,y)\approx
p(x,x,y)\approx q(x,y,x)\approx p(x,y,y)
\end{array}    
\end{equation*}
This system still allows both projection maps in any algebra, and adding any new identities from \eqref{jedan} can only give us the whole system \eqref{jedan}. 

If we equalize the term $q(y,x,x)$ with the terms of  \eqref{y}, we shall obtain the following:\begin{equation*}
\begin{array}{c}
p(x,y,x) \approx q(x,x,y)\approx
p(x,x,y)\approx q(x,y,x)\approx q(y,x,x)
\end{array}    
\end{equation*}
This system holds in a full idempotent reduct of a module over $\mathbb{Z}_5$, for we can define both $p$ and $q$ to be $2x+2y+2z$ in this reduct. Adding identities from \eqref{jedan} can only give us \eqref{jedan}.

If we add the identity $p(x,y,y) \approx q(y,x,x)$ to the system \eqref{y} we shall obtain the following:\begin{equation*}
\left\{
\begin{array}{c}
p(x,y,x) \approx q(x,x,y)\approx
p(x,x,y)\approx q(x,y,x)\\
p(x,y,y) \approx q(y,x,x)
\end{array} \right.    
\end{equation*}
This system holds in a full idempotent reduct of a module over $\mathbb{Z}_3$, for we can define both $p$ and $q$ to be $2x+y+z$ in this reduct. Adding identities from \eqref{jedan} can only give us \eqref{jedan}.
\noindent By this we have finished analyzing the system \eqref{y}. By adding identities to this system we can obtain either a system that holds in some full idempotent reduct of a module over a finite ring, or the whole system \eqref{jedan}.  
\item if we equalize the term $p(x,y,y)$ with the terms of the first identity of \eqref{w}, we shall obtain the following system:\begin{equation}
\left\{
\begin{array}{c}
p(x,y,x) \approx q(x,x,y)\approx p(x,y,y)\\
p(x,x,y)\approx q(x,y,x)
\end{array} \right.\label{x}   
\end{equation}
This system allows both terms to be defined as projections in any algebra, so we need to add more identities from \eqref{jedan}. Let us discuss each way of doing this:

If we equalize the terms of the identities  \eqref{x}, we shall obtain the following:\begin{equation*}
\begin{array}{c}
p(x,y,x) \approx q(x,x,y)\approx p(x,y,y)\approx
p(x,x,y)\approx q(x,y,x)
\end{array}   
\end{equation*}
This still allows both projections, and adding identities from \eqref{jedan} gives us nothing but \eqref{jedan}.

If we equalize the term $q(y,x,x)$ with the terms of the first identity of \eqref{x}, we shall obtain the following:\begin{equation*}
\left\{
\begin{array}{c}
p(x,y,x) \approx q(x,x,y)\approx p(x,y,y)\approx q(y,x,x)\\
p(x,x,y)\approx q(x,y,x)
\end{array} \right.  
\end{equation*}
This system holds in a full idempotent reduct of a module over $\mathbb{Z}_5$, for we can define $p$ and $q$ to be $3x+3y$ and $3x+3z$ in this reduct. Adding identities from \eqref{jedan} can only give us \eqref{jedan}.

If we equalize the term $q(y,x,x)$ with the terms of the second identity of \eqref{x}, we shall obtain the following:\begin{equation*}
\left\{
\begin{array}{c}
p(x,y,x) \approx q(x,x,y)\approx p(x,y,y)\\
p(x,x,y)\approx q(x,y,x)\approx q(y,x,x)
\end{array} \right.  
\end{equation*}
This system allows both $p$ and $q$ to be defined as projection maps in any algebra, and adding identities from \eqref{jedan} gives us only the whole system \eqref{jedan}.
\noindent By this we have finished analyzing the system \eqref{x}. By adding identities to this system we can obtain either a system that holds in some full idempotent reduct of a module over a finite ring, or the whole system \eqref{jedan}.  
\item if we equalize the term $p(x,y,y)$ with the terms of the  second identity of \eqref{w}, we shall obtain the following system:\begin{equation}
\left\{
\begin{array}{c}
p(x,y,x) \approx q(x,x,y)\\
p(x,x,y)\approx q(x,y,x)\approx p(x,y,y)
\end{array} \right.\label{q}   
\end{equation}
This system allows both projections, so we need to add more identities from \eqref{jedan}. Let us discuss each way of doing this:

If we equalize the terms of the identities  \eqref{q}, we shall obtain the following:\begin{equation*}
\begin{array}{c}
p(x,y,x) \approx q(x,x,y)\approx
p(x,x,y)\approx q(x,y,x)\approx p(x,y,y)
\end{array}  
\end{equation*}
Both projections are still allowed, and adding identities from \eqref{jedan} can only give us \eqref{jedan}.

If we equalize the term $q(y,x,x)$ with the terms of the first identity of \eqref{q}, we shall obtain the following system: \begin{equation*}
\left\{
\begin{array}{c}
p(x,y,x) \approx q(x,x,y)\approx q(y,x,x)\\
p(x,x,y)\approx q(x,y,x)\approx p(x,y,y)
\end{array} \right.  
\end{equation*}
This allows both projection maps in any algebra ($p$, $q$ being $\pi_3$, $\pi_2$ respectively), and adding identities from \eqref{jedan} can only give us \eqref{jedan}.

If we equalize the term $q(y,x,x)$ with the terms of the second identity of \eqref{q}, we shall obtain the following system:\begin{equation*}
\left\{
\begin{array}{c}
p(x,y,x) \approx q(x,x,y)\\
p(x,x,y)\approx q(x,y,x)\approx p(x,y,y)\approx q(y,x,x)
\end{array} \right.  
\end{equation*}
This system holds in a full idempotent reduct of a module over $\mathbb{Z}_5$, for we can define $p$ and $q$ to be $3x+3z$ and $3x+3y$ respectively in this reduct. Adding identities from \eqref{jedan} can only give us \eqref{jedan}.
\noindent By this we have finished analyzing the system \eqref{q}. By adding identities to this system we can obtain either a system that holds in some full idempotent reduct of a module over a finite ring, or the whole system \eqref{jedan}.
\item if we equalize the term $q(y,x,x)$ with the terms of the first identity of \eqref{w}, we shall obtain the following system:\begin{equation}
\left\{
\begin{array}{c}
p(x,y,x) \approx q(x,x,y)\approx q(y,x,x)\\
p(x,x,y)\approx q(x,y,x)
\end{array} \right.\label{z}   
\end{equation}
This system allows both projection maps in any algebra, so we need to add more identities from \eqref{jedan}. Let us discuss each way of doing this: 

If we equalize the terms of the identities  \eqref{z}, we shall obtain the following:\begin{equation*}
\begin{array}{c}
p(x,y,x) \approx q(x,x,y)\approx q(y,x,x)\approx
p(x,x,y)\approx q(x,y,x)
\end{array}   
\end{equation*}
This system holds in a full idempotent reduct of a module over $\mathbb{Z}_5$, for we can define both $p$ and $q$ to be $2x+2y+2z$ in this reduct. Adding identities from \eqref{jedan} can only give us \eqref{jedan}.

If we equalize the term $p(x,y,y)$ with the terms of the  first identity of \eqref{z}, we shall obtain the following: \begin{equation*}
\left\{
\begin{array}{c}
p(x,y,x) \approx q(x,x,y)\approx q(y,x,x) \approx p(x,y,y)\\
p(x,x,y)\approx q(x,y,x)
\end{array} \right. 
\end{equation*}
This system holds in a full idempotent reduct of a module over $\mathbb{Z}_5$, for we can define  $p$ and $q$ to be $3x+3y$, $3x+3z$ respectively in this reduct. Adding identities from \eqref{jedan} can only give us \eqref{jedan}.

If we equalize the term $p(x,y,y)$ with the terms of the  second identity of \eqref{z}, we shall obtain the following:\begin{equation*}
\left\{
\begin{array}{c}
p(x,y,x) \approx q(x,x,y)\approx q(y,x,x)\\
p(x,x,y)\approx q(x,y,x) \approx p(x,y,y)
\end{array} \right.   
\end{equation*}
This system allows both $p$ and $q$ to be defined as projection maps in any algebra ($p$, $q$ being $\pi_3$, $\pi_2$ respectively), and adding identities from \eqref{jedan} can only give us \eqref{jedan}.
\noindent By this we have finished analyzing the system \eqref{z}. By adding identities to this system we can obtain either a system that holds in some full idempotent reduct of a module over a finite ring, or the whole system \eqref{jedan}.
\item if we equalize the term $q(y,x,x)$ with the terms of the second identity of \eqref{w}, we shall obtain the following system:\begin{equation}
\left\{
\begin{array}{c}
p(x,y,x) \approx q(x,x,y)\\
p(x,x,y)\approx q(x,y,x)\approx q(y,x,x)
\end{array} \right.\label{aa}   
\end{equation}
This system allows both $p$ and $q$ to be defined as projection maps in any algebra, so we need to add some more identities from \eqref{jedan}. Let us discuss each way of doing this:  

If we equalize the terms of the identities \eqref{aa}, we shall obtain the following:\begin{equation*}
\begin{array}{c}
p(x,y,x) \approx q(x,x,y) \approx 
p(x,x,y)\approx q(x,y,x)\approx q(y,x,x)
\end{array} 
\end{equation*}
This system holds in a full idempotent reduct of a module over $\mathbb{Z}_5$, for we can define  both $p$ and $q$ to be $2x+2y+2z$ in this reduct. Adding identities from \eqref{jedan} can only give us \eqref{jedan}.

If we equalize the term $p(x,y,y)$ with the terms of the first identity of  \eqref{aa}, we shall obtain the following system:\begin{equation*}
\left\{
\begin{array}{c}
p(x,y,x) \approx q(x,x,y) \approx p(x,y,y)\\
p(x,x,y)\approx q(x,y,x)\approx q(y,x,x)
\end{array} \right.  
\end{equation*}
This system allows both projections, and adding identities from \eqref{jedan} can only give us \eqref{jedan}. 

If we equalize the term $p(x,y,y)$ with the terms of the second identity of  \eqref{aa}, we shall obtain the following system:\begin{equation*}
\left\{
\begin{array}{c}
p(x,y,x) \approx q(x,x,y) \\
p(x,x,y)\approx q(x,y,x)\approx q(y,x,x) \approx p(x,y,y)
\end{array} \right.  
\end{equation*}
This system holds in a full idempotent reduct of a module over $\mathbb{Z}_5$, for we can define  $p$ and $q$ to be $3x+3z$, $3x+3y$ respectively in this reduct. Adding identities from \eqref{jedan} can only give us \eqref{jedan}.
\noindent By this we have finished analyzing the system \eqref{aa}. By adding identities to this system we can obtain either a system that holds in some full idempotent reduct of a module over a finite ring, or the whole system \eqref{jedan}.
\item if we add the identity $p(x,y,y) \approx q(y,x,x)$ to the system \eqref{w} we shall obtain the following system:\begin{equation}
\left\{
\begin{array}{c}
p(x,y,x) \approx q(x,x,y)\\
p(x,x,y)\approx q(x,y,x)\\
p(x,y,y) \approx q(y,x,x)
\end{array} \right.\label{ab}   
\end{equation}
This system holds in a full idempotent reduct of a module over $\mathbb{Z}_5$, for we can define  $p$ and $q$ to be $3x+3z$, $3x+3y$ respectively in this reduct. We need to add more identities from \eqref{jedan}, but in this case that can only mean equalizing terms of any two of the three identities in the system. Let us discuss each way of doing this: 

If we equalize the  terms of the first and the second identity of \eqref{ab}, we shall obtain the following system:\begin{equation*}
\left\{
\begin{array}{c}
p(x,y,x) \approx q(x,x,y)\approx
p(x,x,y)\approx q(x,y,x)\\
p(x,y,y) \approx q(y,x,x)
\end{array} \right.   
\end{equation*}
This system holds in a full idempotent reduct of a module over $\mathbb{Z}_3$, for we can define  both $p$ and $q$ to be $2x+y+z$ in this reduct. Adding identities from \eqref{jedan} gives us nothing but the whole system \eqref{jedan}. 

If we equalize the terms of the second and the third identity of \eqref{ab}, we shall obtain the following system:\begin{equation*}
\left\{
\begin{array}{c}
p(x,y,x) \approx q(x,x,y)\\
p(x,x,y)\approx q(x,y,x)\approx
p(x,y,y) \approx q(y,x,x)
\end{array} \right. 
\end{equation*}
This system holds in a full idempotent reduct of a module over $\mathbb{Z}_5$, for we can define  $p$ and $q$ to be $3x+3z$, $3x+3y$ respectively in this reduct. Adding identities from \eqref{jedan} gives us nothing but the whole system \eqref{jedan}. 

If we equalize the terms of the first and the third identity of \eqref{ab}, we shall obtain the following system:\begin{equation*}
\left\{
\begin{array}{c}
p(x,y,x) \approx q(x,x,y)\approx
p(x,y,y) \approx q(y,x,x)\\
p(x,x,y)\approx q(x,y,x)
\end{array} \right.  
\end{equation*}
This system holds in a full idempotent reduct of a module over $\mathbb{Z}_5$, for we can define  $p$ and $q$ to be $3x+3y$, $3x+3z$ respectively in this reduct. Adding identities from \eqref{jedan} gives us nothing but the whole system \eqref{jedan}.
\noindent By this we have finished analyzing the system \eqref{ab}. By adding identities to this system we can obtain either a system that holds in some full idempotent reduct of a module over a finite ring, or the whole system \eqref{jedan}. 
\end{itemize}
\item the second identity is $p(x,x,y)\approx q(y,x,x)$, i.e.we have these two identities:\begin{equation}
\left\{
\begin{array}{c}
p(x,y,x) \approx q(x,x,y)\\
p(x,x,y)\approx q(y,x,x)
\end{array} \right.\label{ac}   
\end{equation}
We shall not examine subsets of \eqref{jedan} including \eqref{ac}, but instead notice the following: if there is a subset $\sigma_1$ of \eqref{jedan} that includes \eqref{ac} and implies omitting types 1 and 2, then there is a subset $\sigma_2$ of \eqref{jedan} that includes \eqref{w} and implies omitting types 1 and 2 ($\sigma_2$ is obtained simply  by substituting $q(y,x,z)$ for $q(x,y,z)$ in $\sigma_1$). We have already proven that $\sigma_2$ does not exist, which means $\sigma_1$  also does not exist, so there is no need to discuss this case at all. 
\item the second identity is $p(x,y,y)\approx q(x,y,x)$, i.e.we have these two identities:
\begin{equation}
\left\{
\begin{array}{c}
p(x,y,x) \approx q(x,x,y)\\
p(x,y,y)\approx q(x,y,x)
\end{array} \right.\label{ad}   
\end{equation}
This system obviously allows both $p$ and $q$ to be defined as projection maps in any algebra, so we need to add some more identities from \eqref{jedan}. This can be done in several ways: we can equalize the terms of the identities \eqref{ad}, or equalize each of the terms $p(x,x,y)$, $q(y,x,x)$ with  the terms in each of the identities, or add the identity $p(x,x,y)\approx q(y,x,x)$ to \eqref{ad}. Let us discuss each of these cases:
\begin{itemize}
\item if we equalize the terms of the identities \eqref{ad}, we shall obtain the following:\begin{equation}
\begin{array}{c}
p(x,y,x) \approx q(x,x,y)\approx
p(x,y,y)\approx q(x,y,x)
\end{array} \label{ae}   
\end{equation}
This system still allows both projections, so we must add more identities from \eqref{jedan}. Let us discuss each way of doing this: 

If we equalize the term $p(x,x,y)$ with the terms of  \eqref{ae}, we shall obtain the following:\begin{equation*}
\begin{array}{c}
p(x,y,x) \approx q(x,x,y)\approx
p(x,y,y)\approx q(x,y,x)\approx p(x,x,y)
\end{array}   
\end{equation*}
This still allows both terms to be defined as projections, and adding more identities from \eqref{jedan} can only give us \eqref{jedan}.

If we equalize the term $q(y,x,x)$ with  the terms of \eqref{ae}, we shall obtain the following:\begin{equation*}
\begin{array}{c}
p(x,y,x) \approx q(x,x,y)\approx
p(x,y,y)\approx q(x,y,x) \approx q(y,x,x)
\end{array}  
\end{equation*}
This system holds in a full idempotent reduct of a module over $\mathbb{Z}_5$, for we can define  $p$ and $q$ to be $4x+2y$, $2x+2y+2z$ respectively in this reduct. Adding identities from \eqref{jedan} gives us nothing but the whole system \eqref{jedan}.

If we add the identity $p(x,x,y)\approx q(y,x,x)$ to the system \eqref{ae} we shall obtain the following system:\begin{equation*}
\left\{
\begin{array}{c}
p(x,y,x) \approx q(x,x,y)\approx
p(x,y,y)\approx q(x,y,x)\\
p(x,x,y)\approx q(y,x,x)
\end{array}\right.   
\end{equation*}
This system holds in a full idempotent reduct of a module over $\mathbb{Z}_5$, for we can define  $p$ and $q$ to be $3x+3y$, $3y+3z$ respectively in this reduct. Adding identities from \eqref{jedan} gives us nothing but the whole system \eqref{jedan}.
\noindent By this we have finished analyzing the system \eqref{ae}. By adding identities to this system we can obtain either a system that holds in some full idempotent reduct of a module over a finite ring, or the whole system \eqref{jedan}.
\item if we equalize the term $p(x,x,y)$ with the terms in the first identity of \eqref{ad}, we shall obtain the following:\begin{equation}
\left\{
\begin{array}{c}
p(x,y,x) \approx q(x,x,y)\approx p(x,x,y)\\
p(x,y,y)\approx q(x,y,x)
\end{array} \right.\label{af}   
\end{equation}
This system allows both $p$ and $q$ to be defined as projection maps in any algebra, so we need to add more identities from \eqref{jedan}. Let us discuss each way of doing this: 

If we equalize the terms of the  identities  \eqref{af}, we shall obtain the following:\begin{equation*}
\begin{array}{c}
p(x,y,x) \approx q(x,x,y)\approx p(x,x,y)\approx 
p(x,y,y)\approx q(x,y,x)
\end{array}  
\end{equation*}
This system still allows both projections, and adding identities from \eqref{jedan} can only give us \eqref{jedan}.

If we equalize the term $q(y,x,x)$ with the terms of the  first identity of \eqref{af}, we shall obtain the following:\begin{equation*}
\left\{
\begin{array}{c}
p(x,y,x) \approx q(x,x,y)\approx p(x,x,y)\approx q(y,x,x)\\
p(x,y,y)\approx q(x,y,x)
\end{array} \right.   
\end{equation*}
This system holds in a full idempotent reduct of a module over $\mathbb{Z}_3$, for we can define  $p$ and $q$ to be $2x+y+z$, $x+2y+z$ respectively in this reduct. Adding identities from \eqref{jedan} gives us nothing but the whole system \eqref{jedan}.

If we equalize the term $q(y,x,x)$ with the terms of the  second identity of \eqref{af}, we shall obtain the following:\begin{equation*}
\left\{
\begin{array}{c}
p(x,y,x) \approx q(x,x,y)\approx p(x,x,y)\\
p(x,y,y)\approx q(x,y,x)\approx q(y,x,x)
\end{array} \right.  
\end{equation*}
This system holds in a full idempotent reduct of a module over $\mathbb{Z}_4$, for we can define  $p$ and $q$ to be $3x+y+z$, $2x+2y+z$ respectively in this reduct. Adding identities from \eqref{jedan} gives us nothing but the whole system \eqref{jedan}.
\noindent By this we have finished analyzing the system \eqref{af}. By adding identities to this system we can obtain either a system that holds in some full idempotent reduct of a module over a finite ring, or the whole system \eqref{jedan}.
\item if we equalize the term $p(x,x,y)$ with the terms of the second identity of \eqref{ad}, we shall obtain the following:\begin{equation}
\left\{
\begin{array}{c}
p(x,y,x) \approx q(x,x,y)\\
p(x,y,y)\approx q(x,y,x)\approx p(x,x,y)
\end{array} \right.\label{ag}
\end{equation}
This system allows both $p$ and $q$ to be defined as projection maps in any algebra, so we need to add more identities from \eqref{jedan}. Let us discuss each way of doing this:  

If we equalize the terms of the  identities  \eqref{ag}, we shall obtain the following system:\begin{equation*}
\begin{array}{c}
p(x,y,x) \approx q(x,x,y)\approx
p(x,y,y)\approx q(x,y,x)\approx p(x,x,y)
\end{array} 
\end{equation*}
This still allows both projections, and adding identities from \eqref{jedan} can only give us the whole system \eqref{jedan}.

If we equalize the term $q(y,x,x)$ with the terms of the first identity of \eqref{ag}, we shall obtain the following: \begin{equation*}
\left\{
\begin{array}{c}
p(x,y,x) \approx q(x,x,y)\approx q(y,x,x)\\
p(x,y,y)\approx q(x,y,x)\approx p(x,x,y)
\end{array} \right.
\end{equation*}
This system allows both projection maps ($p$ and $q$ being $\pi_3$ and $\pi_2$ respectively), and adding identities from \eqref{jedan} can only give us the whole system \eqref{jedan}.

If we equalize the term $q(y,x,x)$ with the terms of the second identity of \eqref{ag}, we shall obtain the following: \begin{equation*}
\left\{
\begin{array}{c}
p(x,y,x) \approx q(x,x,y)\\
p(x,y,y)\approx q(x,y,x)\approx p(x,x,y)\approx q(y,x,x)
\end{array} \right.
\end{equation*}
This system holds in a full idempotent reduct of a module over $\mathbb{Z}_5$, for we can define  $p$ and $q$ to be $3x+3z$, $3x+3y$ respectively in this reduct. Adding identities from \eqref{jedan} gives us nothing but the whole system \eqref{jedan}.
\noindent By this we have finished analyzing the system \eqref{ag}. By adding identities to this system we can obtain either a system that holds in some full idempotent reduct of a module over a finite ring, or the whole system \eqref{jedan}.
\item if we equalize the term $q(y,x,x)$ with the terms of the  first identity of \eqref{ad}, we shall obtain the following:\begin{equation}
\left\{
\begin{array}{c}
p(x,y,x) \approx q(x,x,y) \approx q(y,x,x)\\
p(x,y,y)\approx q(x,y,x)
\end{array} \right.\label{ah}   
\end{equation}
This system allows both projection maps, so we need to add some more identities from \eqref{jedan}. Let us discuss each way of doing this:  

If we equalize the terms of the  identities \eqref{ah}, we shall obtain the following system:\begin{equation*}
\begin{array}{c}
p(x,y,x) \approx q(x,x,y) \approx q(y,x,x)\approx
p(x,y,y)\approx q(x,y,x)
\end{array}   
\end{equation*}
This system holds in a full idempotent reduct of a module over $\mathbb{Z}_5$, for we can define  $p$ and $q$ to be $4x+2y$, $2x+2y+2z$ respectively in this reduct. Adding identities from \eqref{jedan} gives us nothing but the whole system \eqref{jedan}.

If we equalize the term $p(x,x,y)$ with the terms of the first identity of the system \eqref{ah}, we shall obtain the following system:\begin{equation*}
\left\{
\begin{array}{c}
p(x,y,x) \approx q(x,x,y) \approx q(y,x,x)\approx $p(x,x,y)$ \\
p(x,y,y)\approx q(x,y,x)
\end{array} \right.   
\end{equation*}
This system holds in a full idempotent reduct of a module over $\mathbb{Z}_3$, for we can define  $p$ and $q$ to be $2x+y+z$, $x+2y+z$ respectively in this reduct. Adding identities from \eqref{jedan} gives us nothing but the whole system \eqref{jedan}.

If we equalize the term $p(x,x,y)$ with the terms of the  second identity of the system \eqref{ah}, we shall obtain the following system:\begin{equation*}
\left\{
\begin{array}{c}
p(x,y,x) \approx q(x,x,y) \approx q(y,x,x)\\
p(x,y,y)\approx q(x,y,x)\approx $p(x,x,y)$
\end{array} \right. 
\end{equation*}
This system allows both projection maps, and adding identities from \eqref{jedan} gives us nothing but the whole system \eqref{jedan}.
\noindent By this we have finished analyzing the system \eqref{ah}. By adding identities to this system we can obtain either a system that holds in some full idempotent reduct of a module over a finite ring, or the whole system \eqref{jedan}.
\item if we equalize the term $q(y,x,x)$ with the terms of the second identity of \eqref{ad}, we shall obtain the following:\begin{equation}
\left\{
\begin{array}{c}
p(x,y,x) \approx q(x,x,y) \\
p(x,y,y)\approx q(x,y,x)\approx q(y,x,x)
\end{array} \right.\label{ai}   
\end{equation}
This system holds in a full idempotent reduct of a module over $\mathbb{Z}_5$, for we can define  $p$ and $q$ to be $4x+2y$, $2x+2y+2z$ respectively in this reduct. We need to add some more identities from \eqref{jedan}. Let us discuss each way of doing this:

If we equalize the terms of the identities  \eqref{ai}, we shall obtain the following:\begin{equation*}
\begin{array}{c}
p(x,y,x) \approx q(x,x,y) \approx
p(x,y,y)\approx q(x,y,x)\approx q(y,x,x)
\end{array}  
\end{equation*}
This still holds in a full idempotent reduct of a module over $\mathbb{Z}_5$, for we can define  $p$ and $q$ to be $4x+2y$, $2x+2y+2z$ respectively in this reduct, as in the previous case. Adding identities from the system \eqref{jedan} can only give us the whole system \eqref{jedan}. 

If we equalize the term $p(x,x,y)$ with the terms of the  first identity of the system \eqref{ai}, we shall obtain the following:\begin{equation*}
\left\{
\begin{array}{c}
p(x,y,x) \approx q(x,x,y)\approx p(x,x,y) \\
p(x,y,y)\approx q(x,y,x)\approx q(y,x,x)
\end{array} \right.
\end{equation*}
This system holds in a full idempotent reduct of a module over $\mathbb{Z}_4$, for we can define  $p$ and $q$ to be $3x+y+z$, $2x+2y+z$ respectively in this reduct. Adding identities from the system \eqref{jedan} can only give us the whole system \eqref{jedan}. 

If we equalize the term $p(x,x,y)$ with the terms of the second identity of the system \eqref{ai}, we shall obtain the following:\begin{equation*}
\left\{
\begin{array}{c}
p(x,y,x) \approx q(x,x,y) \\
p(x,y,y)\approx q(x,y,x)\approx q(y,x,x)\approx p(x,x,y)
\end{array} \right.
\end{equation*}
This system holds in a full idempotent reduct of a module over $\mathbb{Z}_5$, for we can define  $p$ and $q$ to be $3x+3z$, $3x+3y$ respectively in this reduct. Adding identities from the system \eqref{jedan} can only give us the whole system \eqref{jedan}.
\noindent By this we have finished analyzing the system \eqref{ai}. By adding identities to this system we can obtain either a system that holds in some full idempotent reduct of a module over a finite ring, or the whole system \eqref{jedan}. 
\item if we add the identity $p(x,x,y)\approx q(y,x,x)$ to the system \eqref{ad} we shall obtain the following system:\begin{equation}
\left\{
\begin{array}{c}
p(x,y,x) \approx q(x,x,y)\\
p(x,y,y)\approx q(x,y,x)\\
p(x,x,y)\approx q(y,x,x)
\end{array} \right.\label{aj}   
\end{equation}
This system holds in a full idempotent reduct of a module over $\mathbb{Z}_5$, for we can define  $p$ and $q$ to be $3x+3y$, $3y+3z$ respectively in this reduct. We need to add more identities from the system \eqref{jedan}, but in this case that can only be done by equalizing terms of any two of the three identities given above. Let us discuss each way of doing this: 

If we equalize the terms of the first and the second identity of the system \eqref{aj}, we shall obtain the following:\begin{equation*}
\left\{
\begin{array}{c}
p(x,y,x) \approx q(x,x,y)\approx
p(x,y,y)\approx q(x,y,x)\\
p(x,x,y)\approx q(y,x,x)
\end{array} \right.  
\end{equation*}
This still holds in a full idempotent reduct of a module over $\mathbb{Z}_5$, for we can define  $p$ and $q$ to be $3x+3y$, $3y+3z$ respectively in this reduct, as in the previous case. Adding identities from the system \eqref{jedan} can only give us the whole system \eqref{jedan}. 

If we equalize the terms of the second and the third identity of the system \eqref{aj}, we shall obtain the following:\begin{equation*}
\left\{
\begin{array}{c}
p(x,y,x) \approx q(x,x,y)\\
p(x,y,y)\approx q(x,y,x)\approx
p(x,x,y)\approx q(y,x,x)
\end{array} \right. 
\end{equation*}
This system holds in a full idempotent reduct of a module over $\mathbb{Z}_5$, for we can define  $p$ and $q$ to be $3x+3z$, $3x+3y$ respectively in this reduct. Adding identities from the system \eqref{jedan} can only give us the whole system \eqref{jedan}. 

If we equalize the terms of the  first and the third identity of the system \eqref{aj}, we shall obtain the following:\begin{equation*}
\left\{
\begin{array}{c}
p(x,y,x) \approx q(x,x,y)\approx
p(x,x,y)\approx q(y,x,x)\\
p(x,y,y)\approx q(x,y,x)
\end{array} \right.  
\end{equation*}
This system holds in a full idempotent reduct of a module over $\mathbb{Z}_3$, for we can define  $p$ and $q$ to be $2x+y+z$, $x+2y+z$ respectively in this reduct. Adding identities from the system \eqref{jedan} can only give us the whole system \eqref{jedan}.
\noindent By this we have finished analyzing the system \eqref{aj}. By adding identities to this system we can obtain either a system that holds in some full idempotent reduct of a module over a finite ring, or the whole system \eqref{jedan}.  
\end{itemize}

By this we have examined all proper subsets of \eqref{jedan} that include the identities \eqref{ad}. Each of them holds in a reduct of a module over some finite ring, therefore does not describe omitting types 1 and 2.  
 
\item the second identity is $p(x,y,y)\approx q(y,x,x)$, i.e.we have these two identities:
\begin{equation}
\left\{
\begin{array}{c}
p(x,y,x) \approx q(x,x,y)\\
p(x,y,y)\approx q(y,x,x)
\end{array} \right.\label{ak}   
\end{equation}
We shall not examine subsets of \eqref{jedan} including \eqref{ak}, but instead notice the following: if there is a subset $\sigma_1$ of \eqref{jedan} that includes \eqref{ak} and implies omitting types 1 and 2, then there is a subset $\sigma_2$ of \eqref{jedan} that includes \eqref{ad} and implies omitting types 1 and 2 ($\sigma_2$ is obtained simply  by substituting $q(y,x,z)$ for $q(x,y,z)$ in $\sigma_1$). We have already proven that $\sigma_2$ does not exist, which means $\sigma_1$  also does not exist, so there is no need to discuss this case at all. 
\end{itemize}

\vspace{0.2 cm}

\noindent By this point we have examined all proper subsets of the system \eqref{jedan} including   the identity $p(x,y,x) \approx q(x,x,y)$ (this identity is considered the first in the examined systems). We have obtained no other system but \eqref{dva} implying omitting types 1 and 2.

\subsection{}\label{sub2}
In this subsection we shall examine proper subsets of the system \eqref{jedan} having one of the following identities for the first one: $p(x,y,x) \approx q(x,y,x)$, $p(x,y,x) \approx q(y,x,x)$, $p(x,x,y) \approx q(x,x,y)$ , $p(x,x,y) \approx q(x,y,x)$, $p(x,x,y) \approx q(y,x,x)$. Let us discuss each of these cases:
\begin{itemize}
\item the first identity is $p(x,y,x) \approx q(x,y,x)$ -- suppose there is a subset of \eqref{jedan} having  $p(x,y,x) \approx q(x,y,x)$ for the first identity and implying omitting types 1 and 2. We shall denote this subset by  $\sigma_1$ . Now we can substitute $q(x,y,z)$ for $q(x,z,y)$ in $\sigma_1$ and obtain an equivalent system $\sigma_2$ (also a subset of \eqref{jedan}). The system $\sigma_2$ has $p(x,y,x) \approx q(x,x,y)$ for the first identity, and of course, implies omitting types 1 and 2. We have proved in the previous subsection that there is no other system but \eqref{dva} having these properties, so $\sigma_2$ is actually the system \eqref{dva}, and $\sigma_1$ is equivalent to \eqref{dva} (obtained by the permutation of variables). This means we can obtain nothing new with the first identity being $p(x,y,x) \approx q(x,y,x)$.
\item the first identity is $p(x,y,x) \approx q(y,x,x)$ -- everything is the same as in the previous item, except for the permutation needed, for in this case we need to substitute $q(x,y,z)$ for $q(z,y,x)$. 
\item the first identity is $p(x,x,y) \approx q(x,x,y)$ -- we shall substitute $p(x,y,z)$ for $p(x,z,y)$ here.
\item $p(x,x,y) \approx q(x,y,x)$ -- we shall substitute $p(x,y,z)$ for $p(x,z,y)$ here and $q(x,y,z)$ for $q(x,z,y)$.
\item $p(x,x,y) \approx q(y,x,x)$ -- we shall substitute $p(x,y,z)$ for $p(x,z,y)$ and $q(x,y,z)$ for $q(z,y,x)$.
\end{itemize}

\vspace{0.2 cm}

\noindent We can now conclude the following: in each of these cases we can obtain either a system equivalent to \eqref{dva}, obtained by a permutation of variables, or a system that holds in some idempotent reduct of a module over a finite ring. 

\subsection{}
In this subsection we shall examine proper subsets of the system \eqref{jedan} having one of the remaining three identities for the first one: $p(x,y,y) \approx q(x,y,x)$, $p(x,y,y) \approx q(x,x,y)$, $p(x,y,y) \approx q(y,x,x)$. 

\vspace{0.2 cm}

\noindent There is no need to go through all of these cases -- namely, if the first identity is  $p(x,y,y) \approx q(x,x,y)$, then by substituting $q(x,y,z)$ for $q(x,z,y)$ in the whole system (which is a subset of \eqref{jedan}, of course), we obtain an equivalent system, also a subset of \eqref{jedan}, with the first identity being $p(x,y,y) \approx q(x,y,x)$. If the first identity is $p(x,y,y) \approx q(y,x,x)$, we shall substitute $q(x,y,z)$ for $q(y,x,z)$ and obtain the same as in the previous case. Therefore \emph{it will be sufficient to examine proper subsets of \eqref{jedan} having $p(x,y,y) \approx q(x,y,x)$ for the first identity}. As done in the subsection \ref{sub1}, we shall vary the second identity, first considering ones only on $p$ and then ones on $p$ and $q$. 

\begin{itemize}
\item the second identity is $p(x,x,y) \approx p(x,y,x)$, i.e. so far we have this system:
\begin{equation}
\left\{
\begin{array}{c}
p(x,y,y) \approx q(x,y,x)\\
p(x,x,y) \approx p(x,y,x) \\
\end{array}\right. \label{al}
\end{equation}
Obviously both $p$ and $q$ can be defined as projection maps in any algebra, so we need to add some more identities from \eqref{jedan}. This can be done in the following ways: we can equalize the  terms of the two identities given, or equalize each of the terms $q(x,x,y)$, $q(y,x,x)$ with terms of each of the identities, or add the identity $q(x,x,y) \approx q(y,x,x)$ to the system. Let us discuss each possibility:
\begin{itemize}
\item by equalizing the terms of the identities  \eqref{al}, we obtain the following:\begin{equation}
\begin{array}{c}
p(x,y,y) \approx q(x,y,x)\approx
p(x,x,y) \approx p(x,y,x) 
\end{array} \label{am}
\end{equation}
Still both terms can be defined as projection maps, so more identities from \eqref{jedan} are needed. Let us discuss each way of adding an identity: 

If we equalize the term $q(x,x,y)$ with the terms of \eqref{am}, we shall obtain the following:\begin{equation*}
\begin{array}{c}
p(x,y,y) \approx q(x,y,x)\approx
p(x,x,y) \approx p(x,y,x)\approx q(x,x,y) 
\end{array} 
\end{equation*}
This still allows both projections, and adding identities from \eqref{jedan} can only give us the whole system \eqref{jedan}.

If we equalize the term $q(y,x,x)$ with the terms of \eqref{am}, we shall obtain the following:\begin{equation*}
\begin{array}{c}
p(x,y,y) \approx q(x,y,x)\approx
p(x,x,y) \approx p(x,y,x)\approx q(y,x,x) 
\end{array} 
\end{equation*}
This still allows both projections, and adding identities from \eqref{jedan} can only give us the whole system \eqref{jedan}.

If we add the identity $q(x,x,y) \approx q(y,x,x)$ to the system \eqref{am} we shall obtain the following system:\begin{equation*}
\begin{array}{c}
p(x,y,y) \approx q(x,y,x)\approx
p(x,x,y) \approx p(x,y,x)\\
q(x,x,y) \approx q(y,x,x) 
\end{array} 
\end{equation*}
This system holds in a full idempotent reduct of a module over $\mathbb{Z}_5$, for we can define  $p$ and $q$ to be $\pi_1$, $3x+3z$ respectively in this reduct. Adding identities from the system \eqref{jedan} can only give us the whole system \eqref{jedan}. 
\noindent By this we have finished analyzing the system \eqref{am}. By adding identities to this system we can obtain either a system that holds in some full idempotent reduct of a module over a finite ring, or the whole system \eqref{jedan}. 
\item if we equalize the term $q(x,x,y)$ with the terms of the  first identity of the system \eqref{al}, we shall obtain the following:\begin{equation}
\left\{
\begin{array}{c}
p(x,y,y) \approx q(x,y,x)\approx q(x,x,y) \\
p(x,x,y) \approx p(x,y,x) 
\end{array}\right. \label{an}
\end{equation}
This system allows both projection maps in any algebra, so we need to add more identities from \eqref{jedan}. Let us discuss each way of adding an identity: 

If we equalize the terms of the identities \eqref{an}, we shall obtain the following: \begin{equation*}
\begin{array}{c}
p(x,y,y) \approx q(x,y,x)\approx q(x,x,y) \approx
p(x,x,y) \approx p(x,y,x) 
\end{array} 
\end{equation*}
Still both terms can be projection maps, and adding any new identities from \eqref{jedan} can only give us \eqref{jedan}.

If we equalize the term $q(y,x,x)$ with the terms of the first identity of \eqref{an}, we shall obtain the following:\begin{equation*}
\left\{
\begin{array}{c}
p(x,y,y) \approx q(x,y,x)\approx q(x,x,y)\approx q(y,x,x) \\
p(x,x,y) \approx p(x,y,x) 
\end{array}\right.
\end{equation*}
This system holds in a full idempotent reduct of a module over $\mathbb{Z}_5$, for we can define  $p$ and $q$ to be $4x+y+z$, $2x+2y+2z$ respectively in this reduct. Adding identities from the system \eqref{jedan} can only give us the whole system \eqref{jedan}.

If we equalize the term $q(y,x,x)$ with the terms of the second identity of \eqref{an}, we shall obtain the following: \begin{equation*}
\left\{
\begin{array}{c}
p(x,y,y) \approx q(x,y,x)\approx q(x,x,y) \\
p(x,x,y) \approx p(x,y,x)\approx q(y,x,x) 
\end{array}\right.
\end{equation*}
This system holds in a full idempotent reduct of a module over $\mathbb{Z}_4$, for we can define  $p$ and $q$ to be $3x+y+z$, $x+2y+2z$ respectively in this reduct. Adding identities from the system \eqref{jedan} can only give us the whole system \eqref{jedan}.
\noindent By this we have finished analyzing the system \eqref{an}. By adding identities to this system we can obtain either a system that holds in some full idempotent reduct of a module over a finite ring, or the whole system \eqref{jedan}. 
\item if we equalize the term $q(x,x,y)$ with the terms of the  second identity of the system \eqref{al}, we shall obtain the following:\begin{equation}
\left\{
\begin{array}{c}
p(x,y,y) \approx q(x,y,x) \\
p(x,x,y) \approx p(x,y,x)\approx q(x,x,y) 
\end{array}\right. \label{ao}
\end{equation}
This system allows both projection maps, so we need to add more identities from \eqref{jedan}. Let us discuss each way of adding an identity: 

If we equalize the terms of the identities \eqref{ao}, we shall obtain the following: \begin{equation*}
\begin{array}{c}
p(x,y,y) \approx q(x,y,x) \approx
p(x,x,y) \approx p(x,y,x)\approx q(x,x,y) 
\end{array}
\end{equation*}
Still both terms can be defined as projection maps in any algebra, and adding identities from the system \eqref{jedan} can only give us the whole system \eqref{jedan}.

If we equalize the term $q(y,x,x)$ with the terms of the first identity of \eqref{ao}, we shall obtain the following: \begin{equation*}
\left\{
\begin{array}{c}
p(x,y,y) \approx q(x,y,x) \approx q(y,x,x) \\
p(x,x,y) \approx p(x,y,x)\approx q(x,x,y) 
\end{array}\right. 
\end{equation*}
This system holds in a full idempotent reduct of a module over $\mathbb{Z}_4$, for we can define  $p$ and $q$ to be $3x+y+z$, $2x+2y+z$ respectively in this reduct. Adding identities from the system \eqref{jedan} can only give us the whole system \eqref{jedan}. 

If we equalize the term $q(y,x,x)$ with the terms of the second identity of \eqref{ao}, we shall obtain the following:\begin{equation*}
\left\{
\begin{array}{c}
p(x,y,y) \approx q(x,y,x)  \\
p(x,x,y) \approx p(x,y,x)\approx q(x,x,y)\approx q(y,x,x) 
\end{array}\right. 
\end{equation*}
This system holds in a full idempotent reduct of a module over $\mathbb{Z}_3$, for we can define  $p$ and $q$ to be $2x+y+z$, $x+2y+z$ respectively in this reduct. Adding identities from the system \eqref{jedan} can only give us the whole system \eqref{jedan}.
\noindent By this we have finished analyzing the system \eqref{ao}. By adding identities to this system we can obtain either a system that holds in some full idempotent reduct of a module over a finite ring, or the whole system \eqref{jedan}. 
\item if we equalize the term $q(y,x,x)$ with the terms of the  first identity of the system \eqref{al}, we shall obtain the following: \begin{equation}
\left\{
\begin{array}{c}
p(x,y,y) \approx q(x,y,x)\approx q(y,x,x)\\
p(x,x,y) \approx p(x,y,x) \\
\end{array}\right. \label{ap}
\end{equation}
This system allows both $p$ and $q$ to be defined as projection maps in any algebra, so we need to add some more identities from \eqref{jedan}. Let us discuss each way of adding an identity: 

If we equalize the terms of the identities \eqref{ap}, we shall obtain the following:\begin{equation*}
\begin{array}{c}
p(x,y,y) \approx q(x,y,x)\approx q(y,x,x)\approx
p(x,x,y) \approx p(x,y,x) 
\end{array}
\end{equation*}
Still both terms can be projection maps, and adding identities from \eqref{jedan} can only give us \eqref{jedan}.

If we equalize the term $q(x,x,y)$ with the terms of the  first identity of \eqref{ap}, we shall obtain he following: \begin{equation*}
\left\{
\begin{array}{c}
p(x,y,y) \approx q(x,y,x)\approx q(y,x,x)\approx q(x,x,y) \\
p(x,x,y) \approx p(x,y,x) \\
\end{array}\right. 
\end{equation*}
This system holds in a full idempotent reduct of a module over $\mathbb{Z}_5$, for we can define  $p$ and $q$ to be $4x+y+z$, $2x+2y+2z$ respectively in this reduct. Adding identities from the system \eqref{jedan} can only give us the whole system \eqref{jedan}.

If we equalize the term $q(x,x,y)$ with the terms of the second identity of \eqref{ap}, we shall obtain he following: \begin{equation*}
\left\{
\begin{array}{c}
p(x,y,y) \approx q(x,y,x)\approx q(y,x,x)\\
p(x,x,y) \approx p(x,y,x) \approx q(x,x,y)  
\end{array}\right. 
\end{equation*}
This system holds in a full idempotent reduct of a module over $\mathbb{Z}_4$, for we can define  $p$ and $q$ to be $3x+y+z$, $2x+2y+z$ respectively in this reduct. Adding identities from the system \eqref{jedan} can only give us the whole system \eqref{jedan}. 
\noindent By this we have finished analyzing the system \eqref{ap}. By adding identities to this system we can obtain either a system that holds in some full idempotent reduct of a module over a finite ring, or the whole system \eqref{jedan}. 
\item if we equalize the term $q(y,x,x)$ with the terms of the  second identity of the system \eqref{al}, we shall obtain the following:\begin{equation}
\left\{
\begin{array}{c}
p(x,y,y) \approx q(x,y,x)\\
p(x,x,y) \approx p(x,y,x) \approx q(y,x,x) 
\end{array}\right. \label{ar}
\end{equation}
Both $p$ and $q$ can be defined as projection maps in any algebra, so we need to add some more identities from \eqref{jedan}. Let us discuss each way of adding an identity: 

If we equalize the terms of the  identities  \eqref{ar}, we shall obtain the following: \begin{equation*}
\begin{array}{c}
p(x,y,y) \approx q(x,y,x)\approx
p(x,x,y) \approx p(x,y,x) \approx q(y,x,x) 
\end{array}
\end{equation*}
Still both terms can be defined as projection maps, and adding identities from \eqref{jedan} can only give us the whole system \eqref{jedan}.

If we equalize the term $q(x,x,y)$ with the terms of the first identity of \eqref{ar}, we shall obtain the following:\begin{equation*}
\left\{
\begin{array}{c}
p(x,y,y) \approx q(x,y,x) \approx q(x,x,y)\\
p(x,x,y) \approx p(x,y,x) \approx q(y,x,x) 
\end{array}\right. 
\end{equation*}
This system holds in a full idempotent reduct of a module over $\mathbb{Z}_4$, for we can define  $p$ and $q$ to be $3x+y+z$, $x+2y+2z$ respectively in this reduct. Adding identities from the system \eqref{jedan} can only give us the whole system \eqref{jedan}. 

If we equalize the term $q(x,x,y)$ with the terms of the second identity of \eqref{ar}, we shall obtain the following:\begin{equation*}
\left\{
\begin{array}{c}
p(x,y,y) \approx q(x,y,x) \\
p(x,x,y) \approx p(x,y,x) \approx q(y,x,x) \approx q(x,x,y) 
\end{array}\right. 
\end{equation*}
This system holds in a full idempotent reduct of a module over $\mathbb{Z}_3$, for we can define  $p$ and $q$ to be $2x+y+z$, $x+2y+z$ respectively in this reduct. Adding identities from the system \eqref{jedan} can only give us the whole system \eqref{jedan}.
\noindent By this we have finished analyzing the system \eqref{ar}. By adding identities to this system we can obtain either a system that holds in some full idempotent reduct of a module over a finite ring, or the whole system \eqref{jedan}. 
\item if we add the identity $q(x,x,y) \approx q(y,x,x)$ to the system \eqref{al} we shall obtain the following:\begin{equation}
\left\{
\begin{array}{c}
p(x,y,y) \approx q(x,y,x)\\
p(x,x,y) \approx p(x,y,x) \\
q(x,x,y) \approx q(y,x,x)
\end{array}\right. \label{as}
\end{equation}
This system holds in a full idempotent reduct of a module over $\mathbb{Z}_5$, for we can define  $p$ and $q$ to be $3y+3z$, $\pi_2$ respectively in this reduct. We need to add some more identities from \eqref{jedan}, and is this case this is done by equalizing  terms of any two of the three identities given above. Let us discuss each way of doing this:  

If we equalize the terms of the first and the second identity of \eqref{as}, we shall obtain the following:\begin{equation*}
\left\{
\begin{array}{c}
p(x,y,y) \approx q(x,y,x)\approx
p(x,x,y) \approx p(x,y,x) \\
q(x,x,y) \approx q(y,x,x)
\end{array}\right. 
\end{equation*}
This system holds in a full idempotent reduct of a module over $\mathbb{Z}_5$, for we can define  $p$ and $q$ to be $\pi_1$, $3x+3z$ respectively in this reduct. Adding any new identities from \eqref{jedan}can only give us \eqref{jedan}. 

If we equalize the terms of the second and the third identity of  \eqref{as}, we shall obtain the following: \begin{equation*}
\left\{
\begin{array}{c}
p(x,y,y) \approx q(x,y,x)\\
p(x,x,y) \approx p(x,y,x) \approx
q(x,x,y) \approx q(y,x,x)
\end{array}\right. 
\end{equation*}
This system holds in a full idempotent reduct of a module over $\mathbb{Z}_3$, for we can define  $p$ and $q$ to be $2x+y+z$, $x+2y+z$ respectively in this reduct. Adding any new identities from \eqref{jedan}can only give us \eqref{jedan}.

If we equalize the terms of the  first and the third identity of \eqref{as}, we shall obtain  the following system:\begin{equation*}
\left\{
\begin{array}{c}
p(x,y,y) \approx q(x,y,x)\approx
q(x,x,y) \approx q(y,x,x)\\
p(x,x,y) \approx p(x,y,x)
\end{array}\right. 
\end{equation*}
This system holds in a full idempotent reduct of a module over $\mathbb{Z}_5$, for we can define  $p$ and $q$ to be $4x+y+z$, $2x+2y+2z$ respectively in this reduct. Adding any new identities from \eqref{jedan} can only give us \eqref{jedan}.
\noindent By this we have finished analyzing the system \eqref{as}. By adding identities to this system we can obtain either a system that holds in some full idempotent reduct of a module over a finite ring, or the whole system \eqref{jedan}. 
\end{itemize}
\noindent By this we have examined all proper subsets of \eqref{jedan} that include the identities \eqref{al}. Each of them holds in a reduct of a module over some finite ring, therefore does not describe omitting types 1 and 2. 
\item the second identity is $p(x,y,y) \approx p(x,x,y)$, i.e. we have the following system:\begin{equation}
\begin{array}{c}
p(x,y,y) \approx q(x,y,x) \approx p(x,x,y)
\end{array}\label{at}
\end{equation} 
This system obviously allows both $p$ and $q$ to be defined as projection maps in any algebra, so we need to add more identities from the system \eqref{jedan}. This can be done in several ways: we can equalize one of the terms $p(x,y,x)$, $q(x,x,y)$, $q(y,x,x)$ with the terms of the identities given, or add a new identity to the system, i.e. one of the identities $p(x,y,x) \approx q(x,x,y)$, $p(x,y,x) \approx q(y,x,x)$, $q(x,x,y) \approx q(y,x,x)$. We shall go through all these cases:
\begin{itemize}
\item by equalizing the term $p(x,y,x)$ with the terms of the  identities \eqref{at}, we obtain the following system:\begin{equation}
\begin{array}{c}
p(x,y,y) \approx q(x,y,x) \approx p(x,x,y)\approx p(x,y,x)
\end{array}\label{au}
\end{equation} 
This still allows both projection maps, so we need to add more identities from \eqref{jedan}. Let us discuss each way of doing this:

If we equalize the term $q(x,x,y)$ with the terms of the  identities \eqref{au}, we shall obtain the following:\begin{equation*}
\begin{array}{c}
p(x,y,y) \approx q(x,y,x) \approx p(x,x,y)\approx p(x,y,x)\approx q(x,x,y)
\end{array}
\end{equation*} 
Still both terms can be defined as projection maps in any algebra, and adding more identities from \eqref{jedan} can only give us the whole system \eqref{jedan}.

If we equalize the term $q(y,x,x)$ with the terms of the  identities \eqref{au}, we shall obtain the following: \begin{equation*}
\begin{array}{c}
p(x,y,y) \approx q(x,y,x) \approx p(x,x,y)\approx p(x,y,x)\approx q(y,x,x)
\end{array}
\end{equation*} 
Still both terms can be defined as projection maps in any algebra, and adding more identities from \eqref{jedan} can only give us the whole system \eqref{jedan}.

If we add the identity $q(x,x,y) \approx q(y,x,x)$ to the identities \eqref{au} we shall obtain the following:\begin{equation*}
\left\{
\begin{array}{c}
p(x,y,y) \approx q(x,y,x) \approx p(x,x,y)\approx p(x,y,x)\\
q(x,x,y) \approx q(y,x,x)
\end{array}\right. 
\end{equation*}
This system holds in a full idempotent reduct of a module over $\mathbb{Z}_5$, for we can define  $p$ and $q$ to be $\pi_1$, $3x+3z$ respectively in this reduct. Adding any new identities from \eqref{jedan} can only give us \eqref{jedan}.
\noindent By this we have finished analyzing the system \eqref{au}. By adding identities to this system we can obtain either a system that holds in some full idempotent reduct of a module over a finite ring, or the whole system \eqref{jedan}. 
\item by equalizing the term $q(x,x,y)$ with the terms of the identities \eqref{at}, we obtain the following system:\begin{equation}
\begin{array}{c}
p(x,y,y) \approx q(x,y,x) \approx p(x,x,y) \approx q(x,x,y)
\end{array}\label{av}
\end{equation}
This system obviously allows both $p$ and $q$ to be defined as projection maps in any algebra, so we need to add more identities from the system \eqref{jedan}. Let us discuss each way of adding an identity: 

If we equalize the term $p(x,y,x)$ with the terms of the identities \eqref{av}, we shall obtain the following:\begin{equation*}
\begin{array}{c}
p(x,y,y) \approx q(x,y,x) \approx p(x,x,y) \approx q(x,x,y)\approx p(x,y,x)
\end{array}
\end{equation*}
Still both terms can be defined as projection maps in any algebra, and adding more identities from \eqref{jedan} can only give us the whole system \eqref{jedan}.

If we equalize the term $q(y,x,x)$ with the terms of the  identities \eqref{av}, we shall obtain the following:\begin{equation*}
\begin{array}{c}
p(x,y,y) \approx q(x,y,x) \approx p(x,x,y) \approx q(x,x,y)\approx q(y,x,x)
\end{array}
\end{equation*}
This system holds in a full idempotent reduct of a module over $\mathbb{Z}_5$, for we can define  $p$ and $q$ to be $4x+2z$, $2x+2y+2z$ respectively in this reduct. Adding any new identities from \eqref{jedan} can only give us \eqref{jedan}.

If we add the identity $p(x,y,x)\approx q(y,x,x)$ to the identities \eqref{av} we shall obtain the following: \begin{equation*}
\left\{
\begin{array}{c}
p(x,y,y) \approx q(x,y,x) \approx p(x,x,y) \approx q(x,x,y)\\
p(x,y,x)\approx q(y,x,x)
\end{array}\right. 
\end{equation*}
This system holds in a full idempotent reduct of a module over $\mathbb{Z}_5$, for we can define  $p$ and $q$ to be $3x+3z$, $3y+3z$ respectively in this reduct. Adding any new identities from \eqref{jedan} can only give us \eqref{jedan}.
\noindent By this we have finished analyzing the system \eqref{av}. By adding identities to this system we can obtain either a system that holds in some full idempotent reduct of a module over a finite ring, or the whole system \eqref{jedan}. 
\item by equalizing the term $q(y,x,x)$ with the terms of the  identities \eqref{at}, we obtain the following system:\begin{equation}
\begin{array}{c}
p(x,y,y) \approx q(x,y,x) \approx p(x,x,y) \approx q(y,x,x)
\end{array}\label{aw}
\end{equation}
This system allows both $p$ and $q$ to be defined as projection maps in any algebra, so we need to add more identities from the system \eqref{jedan}. Let us discuss each way of adding an identity: 

If we equalize the term $p(x,y,x)$ with the terms of the identities \eqref{aw}, we shall obtain the following: \begin{equation*}
\begin{array}{c}
p(x,y,y) \approx q(x,y,x) \approx p(x,x,y) \approx q(y,x,x)\approx p(x,y,x)
\end{array}
\end{equation*}
Still both terms can be defined as projection maps in any algebra, and adding more identities from \eqref{jedan} can only give us the whole system \eqref{jedan}.

If we equalize the term $q(x,x,y)$ with the terms of the identities \eqref{aw}, we shall obtain the following: \begin{equation*}
\begin{array}{c}
p(x,y,y) \approx q(x,y,x) \approx p(x,x,y) \approx q(y,x,x)\approx q(x,x,y)
\end{array}
\end{equation*}
This system holds in a full idempotent reduct of a module over $\mathbb{Z}_5$, for we can define  $p$ and $q$ to be $4x+2z$, $2x+2y+2z$ respectively in this reduct. Adding any new identities from \eqref{jedan} can only give us \eqref{jedan}.

If we add the identity $p(x,y,x)\approx q(x,x,y)$ to the identities \eqref{aw} we shall obtain the following:\begin{equation*}
\left\{
\begin{array}{c}
p(x,y,y) \approx q(x,y,x) \approx p(x,x,y) \approx q(y,x,x)\\
p(x,y,x)\approx q(x,x,y)
\end{array}\right.
\end{equation*}
This system holds in a full idempotent reduct of a module over $\mathbb{Z}_5$, for we can define  $p$ and $q$ to be $3x+3z$, $3x+3y$ respectively in this reduct. Adding any new identities from \eqref{jedan} can only give us \eqref{jedan}.
\noindent By this we have finished analyzing the system \eqref{aw}. By adding identities to this system we can obtain either a system that holds in some full idempotent reduct of a module over a finite ring, or the whole system \eqref{jedan}. 
\item if we add the identity $p(x,y,x) \approx q(x,x,y)$ to the system \eqref{at} we shall obtain the following:\begin{equation}
\left\{
\begin{array}{c}
p(x,y,y) \approx q(x,y,x) \approx p(x,x,y)\\
p(x,y,x) \approx q(x,x,y)
\end{array} \right. \label{ax}
\end{equation}
This system allows both $p$ and $q$ to be defined as projection maps in any algebra, so we need to add more identities from \eqref{jedan}. Let us discuss each way of adding an identity: 

If we equalize the terms of the identities \eqref{ax}, we shall obtain the following: \begin{equation*}
\begin{array}{c}
p(x,y,y) \approx q(x,y,x) \approx p(x,x,y)\approx
p(x,y,x) \approx q(x,x,y)
\end{array} 
\end{equation*} 
Still both projections are allowed, and adding identities from \eqref{jedan} can only give us the whole system \eqref{jedan}. 

If we equalize the term $q(y,x,x)$ with the terms of the first identity of the system \eqref{ax},  we shall obtain the following:\begin{equation*}
\left\{
\begin{array}{c}
p(x,y,y) \approx q(x,y,x) \approx p(x,x,y)\approx q(y,x,x)\\
p(x,y,x) \approx q(x,x,y)
\end{array} \right. 
\end{equation*}
This system holds in a full idempotent reduct of a module over $\mathbb{Z}_5$, for we can define  $p$ and $q$ to be $3x+3z$, $3x+3y$ respectively in this reduct. Adding any new identities from \eqref{jedan} can only give us \eqref{jedan}.

If we equalize the term $q(y,x,x)$ with the terms of the  second identity of the system \eqref{ax}, we shall obtain the following:\begin{equation*}
\left\{
\begin{array}{c}
p(x,y,y) \approx q(x,y,x) \approx p(x,x,y)\\
p(x,y,x) \approx q(x,x,y)\approx q(y,x,x)
\end{array} \right. 
\end{equation*}
This system allows both projection maps in any algebra ($p$ and $q$ can be defined as $\pi_3$, $\pi_2$ respectively), and adding any more identities from \eqref{jedan} can only give us \eqref{jedan}.
\noindent By this we have finished analyzing the system \eqref{ax}. By adding identities to this system we can obtain either a system that holds in some full idempotent reduct of a module over a finite ring, or the whole system \eqref{jedan}. 
\item if we add the identity $p(x,y,x) \approx q(y,x,x)$ to the system \eqref{at} we shall obtain the following:
\begin{equation}
\left\{
\begin{array}{c}
p(x,y,y) \approx q(x,y,x) \approx p(x,x,y)\\
p(x,y,x) \approx q(y,x,x)
\end{array} \right. \label{ay}
\end{equation}
This system allows both terms to be defined as projection maps in any algebra, so we need to add more identities from \eqref{jedan}. Let us discuss each way of adding an identity:  

If we equalize the terms of the identities  \eqref{ay}, we shall obtain the following: \begin{equation*}
\begin{array}{c}
p(x,y,y) \approx q(x,y,x) \approx p(x,x,y)\approx
p(x,y,x) \approx q(y,x,x)
\end{array} 
\end{equation*}
This still allows both projection maps, and adding any new identities from \eqref{jedan} can only give us \eqref{jedan}.

If we equalize the term $q(x,x,y)$ with the terms of the  first identity of \eqref{ay}, we shall obtain the following:\begin{equation*}
\left\{
\begin{array}{c}
p(x,y,y) \approx q(x,y,x) \approx p(x,x,y)\approx q(x,x,y)\\
p(x,y,x) \approx q(y,x,x)
\end{array} \right. 
\end{equation*}
This system holds in a full idempotent reduct of a module over $\mathbb{Z}_5$, for we can define  $p$ and $q$ to be $3x+3z$, $3y+3z$ respectively in this reduct. Adding any new identities from \eqref{jedan} can only give us \eqref{jedan}.

If we equalize the term $q(x,x,y)$ with the terms of the second identity of \eqref{ay}, we shall obtain the following:\begin{equation*}
\left\{
\begin{array}{c}
p(x,y,y) \approx q(x,y,x) \approx p(x,x,y)\\
p(x,y,x) \approx q(y,x,x) \approx q(x,x,y)
\end{array} \right. 
\end{equation*}
This system allows both projection maps in any algebra ($p$ and $q$ can be defined as $\pi_3$, $\pi_2$ respectively), and adding any more identities from \eqref{jedan} can only give us \eqref{jedan}.
\noindent By this we have finished analyzing the system \eqref{ay}. By adding identities to this system we can obtain either a system that holds in some full idempotent reduct of a module over a finite ring, or the whole system \eqref{jedan}. 
\item if we add the identity $q(x,x,y) \approx q(y,x,x)$ to the system \eqref{at} we shall obtain the following:
\begin{equation}
\left\{
\begin{array}{c}
p(x,y,y) \approx q(x,y,x) \approx p(x,x,y)\\
q(x,x,y) \approx q(y,x,x)
\end{array} \right. \label{az}
\end{equation}
This system allows both projection maps in any algebra ($p$ and $q$ can be defined as $\pi_3$, $\pi_2$ respectively), so we need to add more identities from \eqref{jedan}. Let us discuss each way of adding an identity:  

By equalizing  the terms of the identities \eqref{az}, we obtain the following system: \begin{equation*}
\begin{array}{c}
p(x,y,y) \approx q(x,y,x) \approx p(x,x,y)\approx
q(x,x,y) \approx q(y,x,x)
\end{array} 
\end{equation*}
This system holds in a full idempotent reduct of a module over $\mathbb{Z}_5$, for we can define  $p$ and $q$ to be $4x+2z$, $2x+2y+2z$ respectively in this reduct. Adding any new identities from \eqref{jedan} can only give us \eqref{jedan}.

If we equalize the term $p(x,y,x)$ with the terms of the  first identity of  \eqref{az}, we shall obtain the following:\begin{equation*}
\left\{
\begin{array}{c}
p(x,y,y) \approx q(x,y,x) \approx p(x,x,y) \approx p(x,y,x) \\
q(x,x,y) \approx q(y,x,x)
\end{array} \right. 
\end{equation*}
This system holds in a full idempotent reduct of a module over $\mathbb{Z}_5$, for we can define  $p$ and $q$ to be $\pi_1$, $3x+3z$ respectively in this reduct. Adding any new identities from \eqref{jedan} can only give us \eqref{jedan}.

If we equalize the term $p(x,y,x)$ with the terms of the  second identity of  \eqref{az}, we shall obtain the following:\begin{equation*}
\left\{
\begin{array}{c}
p(x,y,y) \approx q(x,y,x) \approx p(x,x,y) \\
q(x,x,y) \approx q(y,x,x) \approx p(x,y,x)
\end{array} \right. 
\end{equation*}
This system allows both projection maps in any algebra ($p$ and $q$ can be defined as $\pi_3$, $\pi_2$ respectively), and adding any identities from \eqref{jedan} can only give us \eqref{jedan}. 
\noindent By this we have finished analyzing the system \eqref{az}. By adding identities to this system we can obtain either a system that holds in some full idempotent reduct of a module over a finite ring, or the whole system \eqref{jedan}. 
\end{itemize}

\vspace{0.2 cm}

\noindent By this we have examined all proper subsets of \eqref{jedan} that include the identities \eqref{at}. Each of them holds in some full idempotent reduct of a module over a finite ring, therefore does not imply omitting types 1 and 2. 
\item the second identity is $p(x,y,y) \approx p(x,y,x)$, i.e. we have the following system:\begin{equation}
\begin{array}{c}
p(x,y,y) \approx q(x,y,x) \approx p(x,y,x)
\end{array}\label{aq}
\end{equation} 
This system obviously allows both $p$ and $q$ to be defined as projection maps, so we need to add more identities from \eqref{jedan}. This can be done in the following ways: we can equalize one of the terms $p(x,x,y)$, $q(x,x,y)$, $q(y,x,x)$ with the terms in the identities given, or add to them one of the identities $p(x,x,y) \approx q(x,x,y)$, $q(x,x,y) \approx q(y,x,x)$ , $p(x,x,y) \approx q(y,x,x)$. As before we shall discuss each case:
\begin{itemize}
\item if we equalize the term $p(x,x,y)$ with the terms of the identities \eqref{aq}, we shall obtain the following:\begin{equation}
\begin{array}{c}
p(x,y,y) \approx q(x,y,x) \approx p(x,y,x)\approx p(x,x,y)
\end{array}\label{ba}
\end{equation}
This system still allows both terms to be defined as projection maps in any algebra, so we need to add more identities from \eqref{jedan}. Let us discuss each way of adding an identity:  

If we equalize the term $q(x,x,y)$ with the terms of the identities \eqref{ba}, we shall obtain the following: \begin{equation*}
\begin{array}{c}
p(x,y,y) \approx q(x,y,x) \approx p(x,y,x)\approx p(x,x,y) \approx q(x,x,y) 
\end{array}
\end{equation*}
This still allows both projections, and adding identities from \eqref{jedan} can only give us \eqref{jedan}.

If we equalize the term $q(y,x,x)$ with the terms of the identities \eqref{ba}, we shall obtain the following: \begin{equation*}
\begin{array}{c}
p(x,y,y) \approx q(x,y,x) \approx p(x,y,x)\approx p(x,x,y) \approx q(y,x,x) 
\end{array}
\end{equation*}
This still allows both projections, and adding identities from \eqref{jedan} can only give us \eqref{jedan}.

If we add the identity  $q(x,x,y) \approx q(y,x,x)$ to the system \eqref{ba} we shall obtain the following: \begin{equation*}
\left\{
\begin{array}{c}
p(x,y,y) \approx q(x,y,x) \approx p(x,y,x)\approx p(x,x,y)\\
q(x,x,y) \approx q(y,x,x)
\end{array}\right.
\end{equation*}
This system holds in a full idempotent reduct of a module over $\mathbb{Z}_5$, for we can define  $p$ and $q$ to be $\pi_1$, $3x+3z$ respectively in this reduct. Adding any new identities from \eqref{jedan} can only give us \eqref{jedan}.
\noindent By this we have finished analyzing the system \eqref{ba}. By adding identities to this system we can obtain either a system that holds in some full idempotent reduct of a module over a finite ring, or the whole system \eqref{jedan}. 
\item if we equalize the term $q(x,x,y)$ with the terms of the  identities \eqref{aq}, we shall obtain the following:\begin{equation}
\begin{array}{c}
p(x,y,y) \approx q(x,y,x) \approx p(x,y,x)\approx q(x,x,y)
\end{array}\label{bb}
\end{equation}
This system obviously allows both terms to be projection maps in any algebra, so more identities from \eqref{jedan} are needed here. Let us discuss each way of adding an identity:

If we equalize the term $p(x,x,y)$ with the terms of the identities \eqref{bb}, we shall obtain the following:\begin{equation*}
\begin{array}{c}
p(x,y,y) \approx q(x,y,x) \approx p(x,y,x)\approx q(x,x,y) \approx p(x,x,y)
\end{array}
\end{equation*}
Still both projection maps are allowed, and adding any new identities from \eqref{jedan} can only give us \eqref{jedan}.

If we equalize the term $q(y,x,x)$ with the terms of the identities \eqref{bb}, we shall obtain the following:\begin{equation*}
\begin{array}{c}
p(x,y,y) \approx q(x,y,x) \approx p(x,y,x)\approx q(x,x,y) \approx q(y,x,x)
\end{array}
\end{equation*}
This system holds in a full idempotent reduct of a module over $\mathbb{Z}_5$, for we can define  $p$ and $q$ to be $4x+2y$, $2x+2y+2z$ respectively in this reduct. Adding any new identities from \eqref{jedan} can only give us \eqref{jedan}.

If we add the identity $p(x,x,y) \approx q(y,x,x)$ to the identities \eqref{bb} we shall obtain the following:\begin{equation*}
\left\{
\begin{array}{c}
p(x,y,y) \approx q(x,y,x) \approx p(x,y,x)\approx q(x,x,y)\\
p(x,x,y) \approx q(y,x,x)
\end{array}\right.
\end{equation*}
This system holds in a full idempotent reduct of a module over $\mathbb{Z}_5$, for we can define  $p$ and $q$ to be $3x+3y$, $3y+3z$ respectively in this reduct. Adding any new identities from \eqref{jedan} can only give us \eqref{jedan}.
\noindent By this we have finished analyzing the system \eqref{bb}. By adding identities to this system we can obtain either a system that holds in some full idempotent reduct of a module over a finite ring, or the whole system \eqref{jedan}. 
\item if we equalize the term $q(y,x,x)$ with the terms of the  identities \eqref{aq}, we shall obtain the following:\begin{equation}
\begin{array}{c}
p(x,y,y) \approx q(x,y,x) \approx p(x,y,x)\approx q(y,x,x)
\end{array}\label{bc}
\end{equation}
This allows both $p$ and $q$ to be defined as projection maps in any algebra, so more identities from \eqref{jedan} are needed here. Let us discuss each way of adding an identity:  

If we equalize the term $p(x,x,y)$ with the terms of the identities \eqref{bc}, we shall obtain the following:\begin{equation*}
\begin{array}{c}
p(x,y,y) \approx q(x,y,x) \approx p(x,y,x)\approx q(y,x,x)\approx p(x,x,y)
\end{array}
\end{equation*}
Still both projections are allowed, and adding any new identities from \eqref{jedan} can only give us \eqref{jedan}. 

If we equalize the term $q(x,x,y)$ with the terms of the  identities \eqref{bc}, we shall obtain the following:\begin{equation*}
\begin{array}{c}
p(x,y,y) \approx q(x,y,x) \approx p(x,y,x)\approx q(y,x,x)\approx q(x,x,y)
\end{array}
\end{equation*}
This system holds in a full idempotent reduct of a module over $\mathbb{Z}_5$, for we can define  $p$ and $q$ to be $4x+2y$, $2x+2y+2z$ respectively in this reduct. Adding any new identities from \eqref{jedan} can only give us \eqref{jedan}.

If we add the identity $p(x,x,y) \approx q(x,x,y)$ to the identities \eqref{bc} we shall obtain the following:\begin{equation*}
\left\{
\begin{array}{c}
p(x,y,y) \approx q(x,y,x) \approx p(x,y,x)\approx q(y,x,x)\\
p(x,x,y) \approx q(x,x,y)
\end{array} \right.
\end{equation*}
This system holds in a full idempotent reduct of a module over $\mathbb{Z}_5$, for we can define  $p$ and $q$ to be $3x+3y$, $3x+3y$ respectively in this reduct. Adding any new identities from \eqref{jedan} can only give us \eqref{jedan}.
\noindent By this we have finished analyzing the system \eqref{bc}. By adding identities to this system we can obtain either a system that holds in some full idempotent reduct of a module over a finite ring, or the whole system \eqref{jedan}. 
\item if we add the identity $p(x,x,y) \approx q(x,x,y)$ to the identities \eqref{aq} we shall obtain the following:\begin{equation}
\left\{
\begin{array}{c}
p(x,y,y) \approx q(x,y,x) \approx p(x,y,x)\\
p(x,x,y) \approx q(x,x,y)
\end{array} \right. \label{bd}
\end{equation}
This system allows both terms to be defined as projection maps in any algebra, so more identities from \eqref{jedan} are needed. Let us discuss each way of adding an identity:  

If we equalize the terms of the identities \eqref{bd}, we shall obtain the following: \begin{equation*}
\begin{array}{c}
p(x,y,y) \approx q(x,y,x) \approx p(x,y,x)\approx
p(x,x,y) \approx q(x,x,y)
\end{array} 
\end{equation*}
Still both terms can be defined as projections, and adding any new identity from \eqref{jedan} gives us only the whole system \eqref{jedan}.

If we equalize the term $q(y,x,x)$ with the terms of the first identity of \eqref{bd}, we shall obtain the following: \begin{equation*}
\left\{
\begin{array}{c}
p(x,y,y) \approx q(x,y,x) \approx p(x,y,x) \approx q(y,x,x)\\
p(x,x,y) \approx q(x,x,y)
\end{array} \right. 
\end{equation*}
This system holds in a full idempotent reduct of a module over $\mathbb{Z}_5$, for we can define  $p$ and $q$ to be $3x+3y$, $3x+3y$ respectively in this reduct. Adding any new identities from \eqref{jedan} can only give us \eqref{jedan}.

If we equalize the term $q(y,x,x)$ with the terms of the second identity of \eqref{bd}, we shall obtain the following: \begin{equation*}
\left\{
\begin{array}{c}
p(x,y,y) \approx q(x,y,x) \approx p(x,y,x) \\
p(x,x,y) \approx q(x,x,y)\approx q(y,x,x)
\end{array} \right. 
\end{equation*}
Both terms can now be defined as projection maps in any algebra, and adding any new identities from \eqref{jedan} can only give us \eqref{jedan}.
\noindent By this we have finished analyzing the system \eqref{bd}. By adding identities to this system we can obtain either a system that holds in some full idempotent reduct of a module over a finite ring, or the whole system \eqref{jedan}. 
\item if we add the identity $q(x,x,y) \approx q(y,x,x)$ to the identities \eqref{aq} we shall obtain the following:\begin{equation}
\left\{
\begin{array}{c}
p(x,y,y) \approx q(x,y,x) \approx p(x,y,x)\\
q(x,x,y) \approx q(y,x,x)
\end{array} \right. \label{be}
\end{equation}
This system holds in a full idempotent reduct of a module over $\mathbb{Z}_5$, for we can define  $p$ and $q$ to be $\pi_1$, $3x+3z$ respectively in this reduct. Therefore we need to add some more identities from \eqref{jedan}. Let us discuss each way of adding an identity: 

If we equalize the terms of the  identities \eqref{be}, we shall obtain the following: \begin{equation*}
\begin{array}{c}
p(x,y,y) \approx q(x,y,x) \approx p(x,y,x)\approx
q(x,x,y) \approx q(y,x,x)
\end{array} 
\end{equation*}
This system holds in a full idempotent reduct of a module over $\mathbb{Z}_5$, for we can define  $p$ and $q$ to be $4x+2y$, $2x+2y+2z$ respectively in this reduct. Adding any new identities from \eqref{jedan} can only give us \eqref{jedan}.

If we equalize the term $p(x,x,y)$ with the terms of the first identity of \eqref{be}, we shall obtain the following: \begin{equation*}
\left\{
\begin{array}{c}
p(x,y,y) \approx q(x,y,x) \approx p(x,y,x) \approx p(x,x,y)\\
q(x,x,y) \approx q(y,x,x)
\end{array} \right. 
\end{equation*}
This system holds in a full idempotent reduct of a module over $\mathbb{Z}_5$, for we can define  $p$ and $q$ to be $\pi_1$, $3x+3z$ respectively in this reduct. Adding more identities from \eqref{jedan} can only give us \eqref{jedan}.

If we equalize the term $p(x,x,y)$ with the terms of the  second identity of \eqref{be}, we shall obtain the following: \begin{equation*}
\left\{
\begin{array}{c}
p(x,y,y) \approx q(x,y,x) \approx p(x,y,x)\\
q(x,x,y) \approx q(y,x,x)\approx p(x,x,y)
\end{array} \right. 
\end{equation*}
This system allows both projection maps in any algebra ($p$ and $q$ can both be defined as $\pi_2$ ), and adding any identities from \eqref{jedan} can only give us \eqref{jedan}. 
\noindent By this we have finished analyzing the system \eqref{be}. By adding identities to this system we can obtain either a system that holds in some full idempotent reduct of a module over a finite ring, or the whole system \eqref{jedan}.
\item if we add the identity $p(x,x,y) \approx q(y,x,x)$ to the identities \eqref{aq} we shall obtain the following:\begin{equation}
\left\{
\begin{array}{c}
p(x,y,y) \approx q(x,y,x) \approx p(x,y,x)\\
p(x,x,y) \approx q(y,x,x)
\end{array} \right. \label{bf}
\end{equation}
This system allows both $p$ and $q$ to be defined as projection maps in any algebra ($p$, $q$  being $\pi_1$, $\pi_3$ respectively), so we need to add more identities from \eqref{jedan}. Let us discuss each way of adding an identity:

If we equalize the terms of the identities  \eqref{bf}, we shall obtain the following: \begin{equation*}
\begin{array}{c}
p(x,y,y) \approx q(x,y,x) \approx p(x,y,x)\approx
p(x,x,y) \approx q(y,x,x)
\end{array} 
\end{equation*}
Still both terms can be defined as projection maps, and adding identities from \eqref{jedan} can only give us \eqref{jedan}.

If we equalize the term $q(x,x,y)$ with the terms of the first identity of \eqref{bf}, we shall obtain the following:\begin{equation*}
\left\{
\begin{array}{c}
p(x,y,y) \approx q(x,y,x) \approx p(x,y,x) \approx q(x,x,y)\\
p(x,x,y) \approx q(y,x,x)
\end{array} \right.
\end{equation*}
This system holds in a full idempotent reduct of a module over $\mathbb{Z}_5$, for we can define  $p$ and $q$ to be $3x+3y$, $3y+3z$ respectively in this reduct. Adding more identities from \eqref{jedan} can only give us \eqref{jedan}.

If we equalize the term $q(x,x,y)$ with the terms of the second identity of \eqref{bf},  we shall obtain the following:\begin{equation*}
\left\{
\begin{array}{c}
p(x,y,y) \approx q(x,y,x) \approx p(x,y,x) \\
p(x,x,y) \approx q(y,x,x) \approx q(x,x,y)
\end{array} \right.
\end{equation*}
This system allows both $p$ and $q$ to be defined as projection maps in any algebra ($p$, $q$  both being $\pi_2$), and adding more identities from \eqref{jedan} can only give us \eqref{jedan}.
\noindent By this we have finished analyzing the system \eqref{bf}. By adding identities to this system we can obtain either a system that holds in some full idempotent reduct of a module over a finite ring, or the whole system \eqref{jedan}.
\end{itemize} 

\vspace{0.2 cm} 

\noindent We have examined all proper subsets of \eqref{jedan} that include the identities \eqref{aq}. Each of them holds in some full idempotent reduct of a module over a finite ring, therefore does not imply omitting types 1 and 2.

\vspace{0.2 cm}

\noindent Up to this point, we have analyzed all proper subsets of \eqref{jedan} that have the identity $p(x,y,y) \approx q(x,y,x)$ as the first one and the second identity only on $p$. None of them implies omitting types 1 and 2. Let us now discuss proper subsets of \eqref{jedan} that have the identity $p(x,y,y) \approx q(x,y,x)$ as the first one, and the second identity on $p$ and $q$. As explained before (in the subsection \ref{sub1}), it is sufficient to examine second identities on $p$ and $q$ that include neither $p(x,y,y)$ on the left, nor $q(x,y,x)$ on the right.

\item the second identity is $p(x,x,y) \approx q(x,x,y)$, i.e. we have the system:\begin{equation}
\left\{
\begin{array}{c}
p(x,y,y) \approx q(x,y,x) \\
p(x,x,y) \approx q(x,x,y)
\end{array} \right. \label{bg}
\end{equation}
If we permute the identities of this system, we shall obtain the system with the first identity being $p(x,x,y) \approx q(x,x,y)$. This system has already been discussed ( in the subsection \ref{sub2}), and it is proven that adding identities from \eqref{jedan} can only give us a system equivalent to \eqref{dva} or the whole system  \eqref{jedan}.
\item the same holds for the second identity being any of the following: $p(x,x,y) \approx q(y,x,x)$, $p(x,y,x) \approx q(y,x,x)$, $p(x,y,x) \approx q(x,x,y)$ -- we can consider the second identity to be the first one, and all these systems have already been discussed or examined in the previous two subsections (\ref{sub1} and \ref{sub2}).
\end{itemize} 

\vspace{0.3 cm}

\noindent By this we have examined all proper subsets of the system \eqref{jedan}, and we can conclude the following: from the system \eqref{jedan} we can obtain a single system, up to a permutation of variables, that implies omitting types 1 and 2 ( and a proper subset of \eqref{jedan}), and this is the system denoted by \eqref{dva}. 

\section {} \label{sesta}

In this section we shall prove the following (this proof is excluded from the subsection \ref{cases12}):

\vspace{0.5 cm}

\noindent from the system \begin{equation}
\begin{array}{c}
x \approx p(x,y,y) \approx p(x,x,y)\approx p(x,y,x) \approx q(x,y,x)\approx q(x,x,y) \approx q(y,x,x) 
\end{array} \label{bh}
\end{equation}
(also denoted by (2) in the subsection \ref{cases12}), we can obtain no more but two systems (up to a permutation of variables) implying omitting types 1 and 2: the first one is the system \eqref{dva} (also denoted by (4) in the subsection \ref{cases12}), and the second one is the system \begin{equation}
\left\{
\begin{array}{r}
x \approx q(x,y,x)\\
p(x,y,y)\approx p(x,y,x)\\ 
p(x,x,y)\approx q(x,x,y) \approx q(y,x,x)\label{bi}
\end{array}\right.
\end{equation}
(denoted by (5) in the subsection \ref{cases12}). 

\vspace{0.2 cm}

\noindent Let us notice the following facts:
\begin{enumerate}
\item the system \eqref{bh} does not imply omitting types 1 and 2, as explained in the subsection \ref{cases12}, so we need to eliminate some of its identities, i.e. we have to form a proper subset of \eqref{bh}.
\item if we have a subset of \eqref{bh} in which no identity has $x$ alone on either side, then it is in fact a subset of \eqref{jedan}, and all of its subsets have already been examined in the section \ref{peta}.
\end{enumerate}
\noindent Therefore it is sufficient to examine proper subsets of \eqref{bh} that have at least one identity with $x$ alone on one side. We shall consider this identity the first in each one of these systems.
\subsection{}\label{sub4}
Proper subsets of the system \eqref{bh} with the first identity being one of the following three: $x \approx p(x,x,y)$, $x \approx p(x,y,x)$, $x \approx p(x,y,y)$ are already examined in the subsection 4.1 of this paper, and it has been proven that none of them implies omitting types 1 and 2. 
\subsection{} \label{sub5}
In this subsection we shall examine proper subsets of the system \eqref{bh} with the first identity being one of the following three: $x \approx q(x,x,y)$, $x \approx q(x,y,x)$, $x \approx q(y,x,x)$.

\vspace{0.2 cm}

\noindent It is easily seen there is no need to examine all these cases: suppose there is a subset $\sigma_1$ of \eqref{bh} that implies omitting types 1 and 2, and its first identity is $x \approx q(x,y,x)$  or  $x \approx q(y,x,x)$. If we substitute $q(x,y,z)$ for  $q(x,z,y)$ or $q(z,y,x)$ respectively in the whole system $\sigma_1$, we shall obtain a new subset $\sigma_2$ of \eqref{bh} which is equivalent to $\sigma_1$, and whose first identity is $x \approx q(x,x,y)$. This means it is sufficient to examine proper subsets of \eqref{bh} whose first identity is $x \approx q(x,x,y)$. 

\vspace{0.2 cm}

\noindent In the following proof we shall make use of the example given in the section 2 of this paper (denoted by example 1 there) -- it is $\mathbf{B}=\langle \ \{\ \!0 \ ,\ 1 \} \ ,\ \land  \  \rangle$, the semilattice with two elements, and this algebra omits types 1 and 2. 

\vspace{0.2 cm}

\noindent Suppose we have a subset $\theta$ of \eqref{bh}, that describes omitting types 1 and 2, with the first identity being $x \approx q(x,x,y)$. The system $\theta$ has to hold in algebra $\mathbf{B}$, so let us notice some important facts:
\begin{enumerate}
\item if there is the identity  $x \approx p(x,x,y)$ in $\theta$, then we have $x \approx q(x,x,y) \approx p(x,x,y)$. The system $\theta$ holds in $\mathbf{B}$, so both $p$ and $q$ can be defined as at most binary terms in this algebra, but this allows $p$ and $q$ to be defined as at most binary terms in a full idempotent reduct of a module over $\mathbb{Z}_5$. So $\theta$ would have to hold in $\mathbb{Z}_5$, and this is impossible. Therefore  $\theta$ cannot include the identity $x \approx p(x,x,y)$ . The same observation holds for identities $x \approx p(x,y,x)$, $x \approx p(x,y,y)$, i.e.  $\theta$ cannot include either of them.
\item if there is no identity on both $p$ and $q$ in the system $\theta$, i.e. all the identities are either only on $p$ or only on $q$, then $q$ is at most a binary term in $\mathbf{B}$, and $p$ can be defined as a projection map $\pi_1$. This also allows both $p$ and $q$ to be defined in a full idempotent reduct of a module over $\mathbb{Z}_5$, so this case is impossible too. In other words, $\theta$ has to include at least one identity on both $p$ and $q$. Moreover, the term $q(x,x,y)$ cannot occur in the identity mentioned, for the reasons explained in the previous item.
\end{enumerate}

\vspace{0.2 cm}

\noindent We shall vary the second identity now:
\begin{itemize}
\item the second identity is $p(x,x,y) \approx q(x,y,x)$ , i.e. we have the identities:\begin{equation}
\left\{
\begin{array}{c}
x \approx q(x,x,y) \\
p(x,x,y) \approx q(x,y,x)
\end{array} \right. \label{bj}
\end{equation}
This system obviously allows both $p$ and $q$ to be defined as projection maps in any algebra, so more identities from \eqref{bh} are needed here. Adding identities from \eqref{bh} can be done in several ways: we can equalize the terms of the identities \eqref{bj} (but we shall not do this for the reasons explained in the item 1 above), we can equalize the term $q(y,x,x)$ with the terms in  each of the identities, or each of the terms $p(x,y,y)$, $p(x,y,x)$ with the terms of the  second identity only (again for the reasons explained in the item 1), or we can add each of the identities $q(y,x,x) \approx p(x,y,y)$, $q(y,x,x) \approx p(x,y,x)$, $p(x,y,y) \approx p(x,y,x)$ to the system \eqref{bj}. Let us go through all these cases:
\begin{itemize}
\item if we equalize the term $q(y,x,x)$ with the terms of the first identity of the system \eqref{bj}, we shall obtain the following: \begin{equation}
\left\{
\begin{array}{c}
x \approx q(x,x,y) \approx q(y,x,x) \\
p(x,x,y) \approx q(x,y,x)
\end{array} \right. \label{bk}
\end{equation}
This still allows both projection maps in any algebra, so more identities from \eqref{bh} are needed. Let us discuss each way of adding an identity: 

If we equalize the term $p(x,y,x)$ with the terms of the  second identity of the system \eqref{bk}, we shall obtain the following system:\begin{equation*}
\left\{
\begin{array}{c}
x \approx q(x,x,y) \approx q(y,x,x) \\
p(x,x,y) \approx q(x,y,x)\approx p(x,y,x)
\end{array} \right. 
\end{equation*}
These terms do not exist in algebra $\mathbf{B}$ mentioned above, so this system does not describe omitting types 1 and 2. Adding identities from \eqref{bh} does not change a thing.

If we equalize the term $p(x,y,y)$ with the terms of the  second identity of the system \eqref{bk}, we shall obtain the following system:\begin{equation*}
\left\{
\begin{array}{c}
x \approx q(x,x,y) \approx q(y,x,x) \\
p(x,x,y) \approx q(x,y,x)\approx p(x,y,y)
\end{array} \right. 
\end{equation*}
This system allows both projection maps, so we need to add more identities from \eqref{bk}. By equalizing the terms in  these two identities, or by equalizing the term $p(x,y,x)$ with the terms in the first of them, we cannot obtain anything useful (this is explained in the item 1 above), so we can only equalize the term $p(x,y,x)$ with the terms of the second identity. We obtain the following:\begin{equation*}
\left\{
\begin{array}{c}
x \approx q(x,x,y) \approx q(y,x,x) \\
p(x,x,y) \approx q(x,y,x)\approx p(x,y,y) \approx p(x,y,x)
\end{array} \right. 
\end{equation*}
This system does not hold in algebra $\mathbf{B}$, and adding identities from \eqref{bh} cannot change that fact. 

If we add the identity $p(x,y,x) \approx p(x,y,y)$ to the system \eqref{bk} we shall obtain the following:\begin{equation*}
\left\{
\begin{array}{c}
x \approx q(x,x,y) \approx q(y,x,x) \\
p(x,x,y) \approx q(x,y,x)\\
p(x,y,x) \approx p(x,y,y)
\end{array} \right. 
\end{equation*}
This system does not hold in algebra $\mathbf{B}$, and adding identities from \eqref{bh} cannot change that fact.
\noindent By this we have finished analyzing the system \eqref{bk}. By adding identities to this system we can obtain either a system that holds in some full idempotent reduct of a module over a finite ring, or a system that does not hold in algebra $\mathbf{B}$.
\item if we equalize the term $q(y,x,x)$ with the terms of the  second identity of the system \eqref{bj}, we shall obtain the following: \begin{equation}
\left\{
\begin{array}{c}
x \approx q(x,x,y) \\
p(x,x,y) \approx q(x,y,x) \approx q(y,x,x)
\end{array} \right. \label{bl}
\end{equation}
This system holds in a full idempotent reduct of a module over $\mathbb{Z}_5$, for we can define  $p$ and $q$ to be $3x+3z$, $3x+3y$ respectively in this reduct. We need to add more identities from \eqref{bh}. By equalizing the terms of the  two identities given we cannot obtain anything useful, as already explained above, and the same holds for equalizing either of the terms $p(x,y,y)$, $p(x,y,x)$ with the terms of the  first identity. So let us discuss the rest of the cases: 

If we equalize the term $p(x,y,y)$ with the terms of the  second identity of \eqref{bl}, we shall obtain the following:\begin{equation*}
\left\{
\begin{array}{c}
x \approx q(x,x,y) \\
p(x,x,y) \approx q(x,y,x) \approx q(y,x,x)\approx p(x,y,y)
\end{array} \right. 
\end{equation*} 
This system also holds in a full idempotent reduct of a module over $\mathbb{Z}_5$, for we can define  $p$ and $q$ to be $3x+3z$, $3x+3y$ respectively in this reduct. We need more identities from \eqref{bh}, and in this case it can only mean equalizing the term $p(x,y,x)$ with the terms of the  second identity, so we obtain the following system: \begin{equation}
\left\{
\begin{array}{c}
x \approx q(x,x,y) \\
p(x,x,y) \approx q(x,y,x) \approx q(y,x,x)\approx p(x,y,y)\approx p(x,y,x)
\end{array} \right. \label{bm} 
\end{equation}
It is easily proved that the system \eqref{bm} actually implies omitting types 1 and 2, for it cannot hold in any full idempotent reduct of a module over a finite ring:
\begin{proof}
Let $p$ be the term $\alpha x + \beta y + \gamma z$ for $\alpha + \beta + \gamma =1$ (in a reduct). Now, from the second identity we obtain $\alpha + \beta  = \alpha = \alpha+ \gamma$, which gives us  $\beta =  \gamma = 0$, i.e. $p$ has to be the first projection map. Then, from the second identity we see that $q$ has to be  the third projection map in this reduct, which is impossible because of the first identity. Therefore $p$ and $q$ do not exist in any reduct of a module over a finite ring. 
\end{proof}
So we have obtained the system \eqref{bm} that implies omitting types 1 and 2, but it is not minimal in that respect -- namely if we substitute $q(x,y,z)$ for $q(x,z,y)$ in this system, we shall obtain an equivalent system: \begin{equation}
\left\{
\begin{array}{c}
x \approx q(x,y,x) \\
p(x,x,y) \approx q(x,x,y) \approx q(y,x,x)\approx p(x,y,y)\approx p(x,y,x)
\end{array} \right. \label{bn} 
\end{equation}
This system \eqref{bn} also implies omitting types 1 and 2, of course, but it contains the system \eqref{bi}, i.e. it includes all the identities of \eqref{bi} and some more. Therefore \eqref{bn} is not a minimal system having this property.
 
If we equalize the term $p(x,y,x)$ with the terms of the  second identity of \eqref{bl}, we shall obtain the following:\begin{equation*}
\left\{
\begin{array}{c}
x \approx q(x,x,y) \\
p(x,x,y) \approx q(x,y,x) \approx q(y,x,x)\approx p(x,y,x)
\end{array} \right. 
\end{equation*}
This system holds in a full idempotent reduct of a module over $\mathbb{Z}_5$, for we can define  $p$ and $q$ to be $3y+3z$, $3x+3y$ respectively in this reduct. Adding more identities from \eqref{bh} can only mean equalizing the term $p(x,y,y)$ with the terms of the second identity, but that would give us the system \eqref{bm}, which is already examined above. 

If we add the identity $p(x,y,y) \approx p(x,y,x)$ to the system \eqref{bl}, we shall obtain the following:\begin{equation*}
\left\{
\begin{array}{c}
x \approx q(x,x,y) \\
p(x,x,y) \approx q(x,y,x) \approx q(y,x,x)\\
p(x,y,y) \approx p(x,y,x)
\end{array} \right. 
\end{equation*}
This system is equivalent to the system \eqref{bi} (it can be obtained from \eqref{bi} by substituting $q(x,y,z)$ for $q(x,z,y)$).
\noindent By this we have finished analyzing the system \eqref{bl}. By adding identities to this system, we can obtain either a system that holds in some full idempotent reduct of a module over a finite ring, or a system that contains the system \eqref{bi}, up to a permutation of variables, or a system equivalent to the system \eqref{bi}, again up to a permutation of variables.  
\item if we equalize the term $p(x,y,y)$ with the terms of the second identity of the system \eqref{bj}, we shall obtain the following: \begin{equation}
\left\{
\begin{array}{c}
x \approx q(x,x,y) \\
p(x,x,y) \approx q(x,y,x) \approx p(x,y,y)
\end{array} \right. \label{bo}
\end{equation}
This system allows both $p$ and $q$ to be projection maps in any algebra, so we need to add more identities from \eqref{bh}. Let us discuss each way of adding an identity: 

If we equalize the term $p(x,y,x)$ with the terms of the second identity of \eqref{bo}, we shall obtain the following system: \begin{equation}
\left\{
\begin{array}{c}
x \approx q(x,x,y) \\
p(x,x,y) \approx q(x,y,x) \approx p(x,y,y) \approx p(x,y,x)
\end{array} \right.\label{bp}
\end{equation} 
This still allows both projection maps in any algebra, so let us add more identities here:

If we equalize the term $q(y,x,x)$ with the terms of the first identity of the system \eqref{bp}, we shall obtain the following:\begin{equation*}
\left\{
\begin{array}{c}
x \approx q(x,x,y) \approx q(y,x,x) \\
p(x,x,y) \approx q(x,y,x) \approx p(x,y,y) \approx p(x,y,x)
\end{array} \right.
\end{equation*}
This system does not hold in algebra $\mathbf{B}$ (example 1, section 2), so it does not describe omitting types 1 and 2.

If we equalize the term $q(y,x,x)$ with the terms of the second identity of the system \eqref{bp}, we shall obtain the following:\begin{equation*}
\left\{
\begin{array}{c}
x \approx q(x,x,y)  \\
p(x,x,y) \approx q(x,y,x) \approx p(x,y,y) \approx p(x,y,x) \approx q(y,x,x)
\end{array} \right.
\end{equation*}
We have already discussed this system, it is denoted by \eqref{bm} above (it  properly contains the system \eqref{bi}, up to a permutation of variables).
\noindent By this we have finished analyzing the system \eqref{bp}. By adding identities to this system, we can obtain either a system that does not hold in $\mathbf{B}$, or a system that properly contains the system \eqref{bi}, up to a permutation of variables.

If we equalize the term $q(y,x,x)$ with the terms of the first identity of \eqref{bo}, we shall obtain the following system: \begin{equation}
\left\{
\begin{array}{c}
x \approx q(x,x,y) \approx q(y,x,x)\\
p(x,x,y) \approx q(x,y,x) \approx p(x,y,y)
\end{array} \right. \label{br} 
\end{equation}
This system allows both projection maps in any algebra, so let us add more identities from \eqref{bh} (it only make sense to equalize the term $p(x,y,x)$ with the terms of the second identity above).

If we equalize the term $p(x,y,x)$ with the terms of the second identity of \eqref{br}, we shall obtain the following:\begin{equation*}
\left\{
\begin{array}{c}
x \approx q(x,x,y) \approx q(y,x,x)\\
p(x,x,y) \approx q(x,y,x) \approx p(x,y,y) \approx p(x,y,x)
\end{array} \right.  
\end{equation*}
This system does not hold in algebra $\mathbf{B}$ (example 1, section 2), so it does not describe omitting types 1 and 2.
By this we have finished analyzing the system \eqref{br}. By adding identities to this system we can obtain only a system that does not hold in $\mathbf{B}$.

If we equalize the term $q(y,x,x)$ with the terms of the second identity of \eqref{bo}, we shall obtain the following system: \begin{equation*}
\left\{
\begin{array}{c}
x \approx q(x,x,y) \\
p(x,x,y) \approx q(x,y,x) \approx p(x,y,y) \approx q(y,x,x)
\end{array} \right.  
\end{equation*}
This system holds in a full idempotent reduct of a module over $\mathbb{Z}_5$, for we can define  $p$ and $q$ to be $3x+3z$, $3x+3y$ respectively in this reduct. We need to add more identities from \eqref{bh}, i.e. to equalize the term $p(x,y,x)$ with the terms of the second identity of the system \eqref{bs}, but it would give us the system \eqref{bm} which is already discussed above. 

If we add the identity $p(x,y,x) \approx q(y,x,x)$ to the system \eqref{bo} we shall obtain the following system: \begin{equation}
\left\{
\begin{array}{c}
x \approx q(x,x,y) \\
p(x,x,y) \approx q(x,y,x) \approx p(x,y,y)\\
p(x,y,x) \approx q(y,x,x)
\end{array} \right. \label{bs}
\end{equation}
This system allows both $p$ and $q$ to be defined as projection maps in any algebra ($p$, $q$ being $\pi_3$, $\pi_2$ respectively), so more identities from \eqref{bh} are needed here. Adding identities can only be done by equalizing  terms of any two of the three identities given above, so let us go through the cases:

If we equalize the terms of the first and the second identity of the system \eqref{bs}, we shall obtain the following:\begin{equation*}
\left\{
\begin{array}{c}
x \approx q(x,x,y) \approx
p(x,x,y) \approx q(x,y,x) \approx p(x,y,y)\\
p(x,y,x) \approx q(y,x,x)
\end{array} \right. 
\end{equation*}
It is easily seen that this system does not hold in algebra $\mathbf{B}$ (example 1, section 2).

If we equalize the terms of the second and the third identity of the system \eqref{bs},  we shall obtain the following: \begin{equation*}
\left\{
\begin{array}{c}
x \approx q(x,x,y) \\
p(x,x,y) \approx q(x,y,x) \approx p(x,y,y) \approx
p(x,y,x) \approx q(y,x,x)
\end{array} \right. 
\end{equation*}
We have already discussed this system, it is denoted by \eqref{bm} above (it  properly contains the system \eqref{bi}, up to a permutation of variables). 

If we equalize the terms of the  first and the third identity of the system \eqref{bs},  we shall obtain the following:\begin{equation*}
\left\{
\begin{array}{c}
x \approx q(x,x,y) \approx p(x,y,x) \approx q(y,x,x)\\
p(x,x,y) \approx q(x,y,x) \approx p(x,y,y)
\end{array} \right. 
\end{equation*}
This allows both projection maps in any algebra, and adding identities from \eqref{bh} can only give us the whole system \eqref{bh}. 
\noindent By this we have finished analyzing the system \eqref{bo}. By adding identities to this system, we can obtain one of the following: a system that holds in some full idempotent reduct of a module over a finite ring, a system that does not hold in $\mathbf{B}$, a system that properly contains the system \eqref{bi}, up to a permutation of variables, or the whole system \eqref{bh}.
\item if we equalize the term $p(x,y,x)$ with the terms of the second identity of the system \eqref{bj}, we shall obtain the following:\begin{equation}
\left\{
\begin{array}{c}
x \approx q(x,x,y) \\
p(x,x,y) \approx q(x,y,x)\approx p(x,y,x)
\end{array} \right. \label{bt}
\end{equation}
This system allows both $p$ and $q$ to be defined as projection maps, so we need more identities from \eqref{bh}. Let us discuss each way of adding an identity: 

If we equalize the term $p(x,y,y)$ with the terms of the second identity of \eqref{bt}, we shall obtain the following:\begin{equation*}
\left\{
\begin{array}{c}
x \approx q(x,x,y) \\
p(x,x,y) \approx q(x,y,x)\approx p(x,y,x) \approx p(x,y,y)
\end{array} \right.
\end{equation*}
This system has already been examined above (denoted by \eqref{bp}). 

If we equalize the term $q(y,x,x)$ with the terms of the  first identity of \eqref{bt}, we shall obtain the following:\begin{equation*}
\left\{
\begin{array}{c}
x \approx q(x,x,y) \approx q(y,x,x) \\
p(x,x,y) \approx q(x,y,x)\approx p(x,y,x) 
\end{array} \right.
\end{equation*}
It is easily seen that this system does not hold in algebra $\mathbf{B}$ (example 1, section 2).

If we equalize the term $q(y,x,x)$ with the terms of the  second identity of \eqref{bt}, we shall obtain the following:\begin{equation*}
\left\{
\begin{array}{c}
x \approx q(x,x,y)  \\
p(x,x,y) \approx q(x,y,x)\approx p(x,y,x) \approx q(y,x,x)
\end{array} \right.
\end{equation*}
This system holds in a full idempotent reduct of a module over $\mathbb{Z}_5$, for we can define  $p$ and $q$ to be $3y+3z$, $3x+3y$ respectively in this reduct. We need to add more identities from \eqref{bh}, but this can only be done by equalizing the term $p(x,y,y)$ with the terms of teh second identity. We obtain the following system:\begin{equation*}
\left\{
\begin{array}{c}
x \approx q(x,x,y)  \\
p(x,x,y) \approx q(x,y,x)\approx p(x,y,x) \approx q(y,x,x)\approx p(x,y,y)
\end{array} \right.
\end{equation*}
We have already discussed this system, it is denoted by \eqref{bm} above (it  properly contains the system \eqref{bi}, up to a permutation of variables).

If we add the identity $p(x,y,y)\approx q(y,x,x)$ to the system \eqref{bt} we shall obtain the following:\begin{equation}
\left\{
\begin{array}{c}
x \approx q(x,x,y) \\
p(x,x,y) \approx q(x,y,x)\approx p(x,y,x)\\
p(x,y,y)\approx q(y,x,x)
\end{array} \right. \label{bu} 
\end{equation}
This system holds in a full idempotent reduct of a module over $\mathbb{Z}_5$, for we can define  $p$ and $q$ to be $2x+2y+2z$, $4x+2y$ respectively in this reduct. We need more identities from \eqref{bh}, but this can only be done by equalizing any terms of  two of the three identities given above, so let us go through the cases: 

If we equalize the terms of the first and the second identity  of the system \eqref{bu}, we shall obtain the following:\begin{equation*}
\left\{
\begin{array}{c}
x \approx q(x,x,y) \approx
p(x,x,y) \approx q(x,y,x)\approx p(x,y,x)\\
p(x,y,y)\approx q(y,x,x)
\end{array} \right. 
\end{equation*}
It is easily seen that this system does not hold in algebra $\mathbf{B}$ (example 1 in the section 2).

If we equalize the terms of the  second and the third identity of the system \eqref{bu}, we shall obtain the following:\begin{equation*}
\left\{
\begin{array}{c}
x \approx q(x,x,y) \\
p(x,x,y) \approx q(x,y,x)\approx p(x,y,x)\approx
p(x,y,y)\approx q(y,x,x)
\end{array} \right. 
\end{equation*}
We have already discussed this system, it is denoted by \eqref{bm} above (it  properly contains the system \eqref{bi}, up to a permutation of variables).

If we equalize the terms of the first and the third identity of the system \eqref{bu},  we shall obtain the following:\begin{equation*}
\left\{
\begin{array}{c}
x \approx q(x,x,y) \approx p(x,y,y)\approx q(y,x,x)\\
p(x,x,y) \approx q(x,y,x)\approx p(x,y,x)
\end{array} \right.
\end{equation*}
It is easily seen that this system does not hold in algebra $\mathbf{B}$ (example 1, section 2).
\noindent By this we have finished analyzing the system \eqref{bt}. By adding identities to this system, we can obtain one of the following: a system that holds in some full idempotent reduct of a module over a finite ring, a system that does not hold in $\mathbf{B}$ or a system that properly contains the system \eqref{bi}, up to a permutation of variables. 
\item if we add the identity $q(y,x,x) \approx p(x,y,y)$ to the system \eqref{bj} we shall obtain the following:\begin{equation}
\left\{
\begin{array}{c}
x \approx q(x,x,y) \\
p(x,x,y) \approx q(x,y,x)\\
q(y,x,x) \approx p(x,y,y)
\end{array} \right. \label{bv}
\end{equation}
This system allows both $p$ and $q$ to be projection maps in any algebra, so we need to add more identities from \eqref{bh}. Let us discuss each way of adding an identity:  

If we equalize the terms of the  second and the third identity of the system \eqref{bv}, we shall obtain the following system: \begin{equation*}
\left\{
\begin{array}{c}
x \approx q(x,x,y) \\
p(x,x,y) \approx q(x,y,x)\approx
q(y,x,x) \approx p(x,y,y)
\end{array} \right. 
\end{equation*}
This system holds in a full idempotent reduct of a module over $\mathbb{Z}_5$, for we can define  $p$ and $q$ to be $3x+3z$, $3x+3y$ respectively in this reduct. We need more identities from \eqref{bh} here, and this can only be done by equalizing the term $p(x,y,x)$ with the terms of the second identity, but then we obtain the system \eqref{bm} from above, which is already discussed.

If we equalize the term  $p(x,y,x)$  with the terms of the  second identity of the system \eqref{bv}, we shall obtain the following: \begin{equation*}
\left\{
\begin{array}{c}
x \approx q(x,x,y) \\
p(x,x,y) \approx q(x,y,x) \approx p(x,y,x)\\
q(y,x,x) \approx p(x,y,y)
\end{array} \right. 
\end{equation*}
This system has already been discussed above (denoted by \eqref{bu}).

If we equalize the term  $p(x,y,x)$  with the terms of the  third identity of the system \eqref{bv}, we shall obtain the following:\begin{equation*}
\left\{
\begin{array}{c}
x \approx q(x,x,y) \\
p(x,x,y) \approx q(x,y,x)\\
q(y,x,x) \approx p(x,y,y)\approx p(x,y,x)
\end{array} \right. 
\end{equation*}
This system allows both $p$ and $q$ to be defined as projection maps in any algebra, so we need to add more identities from \eqref{bh}, but this can only be done by equalizing the terms of the  second and the third identity, and that would give us the system \eqref{bm} which is already discussed.
\noindent By this we have finished analyzing the system \eqref{bv}. By adding identities to this system, we can obtain one of the following: a system that holds in some full idempotent reduct of a module over a finite ring, a system that does not hold in $\mathbf{B}$, or a system that properly contains the system \eqref{bi}, up to a permutation of variables. 
\item if we add the identity $q(y,x,x) \approx p(x,y,x)$ to the system \eqref{bj} we shall obtain the following:\begin{equation}
\left\{
\begin{array}{c}
x \approx q(x,x,y) \\
p(x,x,y) \approx q(x,y,x)\\
q(y,x,x) \approx p(x,y,x)
\end{array} \right. \label{bw}
\end{equation}
This system allows both $p$ and $q$ to be defined as projection maps in any algebra, so we need to add more identities from \eqref{bh}. Let us discuss each way of adding an identity:  

If we equalize the terms of the second and the third identity of the system \eqref{bw}, we shall obtain the following:\begin{equation*}
\left\{
\begin{array}{c}
x \approx q(x,x,y) \\
p(x,x,y) \approx q(x,y,x)\approx
q(y,x,x) \approx p(x,y,x)
\end{array} \right. 
\end{equation*}
This system holds in a full idempotent reduct of a module over $\mathbb{Z}_5$, for we can define  $p$ and $q$ to be $3y+3z$, $3x+3y$ respectively in this reduct. We need to add more identities from \eqref{bh} here, and this can only be done by equalizing the term $p(x,y,y)$ with the terms of the  second identity of the system above, but that would give us the system \eqref{bm}.

If we equalize the term $p(x,y,y)$ with the terms of the second identity of the system \eqref{bw}, we shall obtain the following:\begin{equation*}
\left\{
\begin{array}{c}
x \approx q(x,x,y) \\
p(x,x,y) \approx q(x,y,x) \approx p(x,y,y)\\
q(y,x,x) \approx p(x,y,x)
\end{array} \right. 
\end{equation*}
This system allows both projection maps, so we need to add more identities from \eqref{bh}, but we can only equalize the terms of the second and the third identity here, which gives us the system \eqref{bm}.

If we equalize the term $p(x,y,y)$ with the terms of the third identity of the system \eqref{bw} we shall obtain the following:\begin{equation*}
\left\{
\begin{array}{c}
x \approx q(x,x,y) \\
p(x,x,y) \approx q(x,y,x)\\
q(y,x,x) \approx p(x,y,x)\approx p(x,y,y)
\end{array} \right. 
\end{equation*}
This allows both projection maps, and by adding more identities from \eqref{bh} we can only obtain the system \eqref{bm}.
\noindent By this we have finished analyzing the system \eqref{bw}. By adding identities to this system, we can obtain one of the following: a system that holds in some full idempotent reduct of a module over a finite ring, or a system that properly contains the system \eqref{bi}, up to a permutation of variables. 
\item if we add the identity $p(x,y,y) \approx p(x,y,x)$ to the system \eqref{bj} we shall obtain the following:\begin{equation}
\left\{
\begin{array}{c}
x \approx q(x,x,y) \\
p(x,x,y) \approx q(x,y,x)\\
p(x,y,y) \approx p(x,y,x)
\end{array} \right. \label{bq}
\end{equation}
This system allows both $p$ and $q$ to be defined as projection maps, so we need to add more identities from \eqref{bh}. Let us discuss each way of adding an identity: 

If we equalize the terms of the second and the third identity of the system \eqref{bq}, we shall obtain the following:\begin{equation}
\left\{
\begin{array}{c}
x \approx q(x,x,y) \\
p(x,x,y) \approx q(x,y,x)\approx
p(x,y,y) \approx p(x,y,x)
\end{array} \right. \label{bz}
\end{equation}
Still both projection maps are allowed, so we need more identities from \eqref{bh}. Let us discuss each way of adding an identity: 

If we equalize the term $q(y,x,x)$ with the terms of the first identity of the system \eqref{bz}, we shall obtain the following:\begin{equation*}
\left\{
\begin{array}{c}
x \approx q(x,x,y) \approx q(y,x,x) \\
p(x,x,y) \approx q(x,y,x)\approx
p(x,y,y) \approx p(x,y,x)
\end{array} \right.
\end{equation*}
It is easily seen that this system does not hold in algebra $\mathbf{B}$ (example 1, section 2).

If we equalize the term $q(y,x,x)$ with terms of the second identity of the system \eqref{bz} we shall obtain the following:\begin{equation*}
\left\{
\begin{array}{c}
x \approx q(x,x,y) \\
p(x,x,y) \approx q(x,y,x)\approx
p(x,y,y) \approx p(x,y,x)\approx q(y,x,x)
\end{array} \right. 
\end{equation*}
This is the system \eqref{bm} which has already been discussed. 

If we equalize the term $q(y,x,x)$ with the terms of the first identity of the system \eqref{bq} we shall obtain the following system:\begin{equation*}
\left\{
\begin{array}{c}
x \approx q(x,x,y)\approx q(y,x,x) \\
p(x,x,y) \approx q(x,y,x)\\
p(x,y,y) \approx p(x,y,x)
\end{array} \right. 
\end{equation*}
It is easily seen that this system does not hold in algebra $\mathbf{B}$ (example 1, section 2).

If we equalize the term $q(y,x,x)$ with the terms of the second identity of the system \eqref{bq} we shall obtain the following system:\begin{equation*}
\left\{
\begin{array}{c}
x \approx q(x,x,y) \\
p(x,x,y) \approx q(x,y,x) \approx q(y,x,x)\\
p(x,y,y) \approx p(x,y,x)
\end{array} \right. 
\end{equation*}
This is the system \eqref{bi} up to a permutation of variables. Adding identities from \eqref{bh} would not make any sense, for if we did, we would obtain also a system that implies omitting types 1 and 2 but not a minimal one in that respect.

If we equalize the term $q(y,x,x)$ with the terms of the  third identity of the system \eqref{bq} we shall obtain the following system:\begin{equation*}
\left\{
\begin{array}{c}
x \approx q(x,x,y) \\
p(x,x,y) \approx q(x,y,x)\\
p(x,y,y) \approx p(x,y,x)\approx q(y,x,x)
\end{array} \right. 
\end{equation*}
This allows both $p$ and $q$ to be defined as projection maps in any algebra, so we need to add more identities from \eqref{bh}, but this can only be done by equalizing the terms of the  second and the third identity of this system , which would give us the system \eqref{bm}.
\noindent By this we have finished analyzing the system \eqref{bq}. By adding identities to this system, we can obtain one of the following: a system that holds in some full idempotent reduct of a module over a finite ring, a system that does not hold in $\mathbf{B}$, a system that properly contains the system \eqref{bi}, up to a permutation of variables, or a system equivalent to the system \eqref{bi}, up to a permutation of variables.
\end{itemize}
Up to this point we have examined all proper subsets of the system \eqref{bh} which include the identities \eqref{bj}.
\item the second identity is $p(x,x,y) \approx q(y,x,x)$, i.e. we have the identities:\begin{equation}
\left\{
\begin{array}{c}
x \approx q(x,x,y) \\
p(x,x,y) \approx q(y,x,x)
\end{array} \right.\label{bx}
\end{equation}
There is no need to examine subsets of the system  \eqref{bh} that include the identities \eqref{bx}, for by substituting $q(x,y,z)$ for  $q(y,x,z)$ in any of them we obtain a subset of \eqref{bh} including the identities \eqref{bj} and all these subsets have already been examined. 
\item the second identity is $p(x,y,x) \approx q(x,y,x)$, i.e. we have the identities:\begin{equation}
\left\{
\begin{array}{c}
x \approx q(x,x,y) \\
p(x,y,x) \approx q(x,y,x)
\end{array} \right.\label{by}
\end{equation}
There is no need to examine subsets of the system  \eqref{bh} that include the identities \eqref{by}, for by substituting $p(x,y,z)$ for  $p(x,z,y)$ in any of them we obtain a subset of \eqref{bh} including the identities \eqref{bj} and all these subsets have already been examined.
\item the second identity is $p(x,y,x) \approx q(y,x,x)$, i.e. we have the identities:
\begin{equation}
\left\{
\begin{array}{c}
x \approx q(x,x,y) \\
p(x,y,x) \approx q(y,x,x)
\end{array} \right.\label{ca}
\end{equation} 
There is no need to examine subsets of the system  \eqref{bh} that include the identities \eqref{ca}, for by substituting $p(x,y,z)$ for  $p(x,z,y)$ and $q(x,y,z)$ for  $q(y,x,z)$ in any of them we obtain a subset of \eqref{bh} including the identities \eqref{bj} and all these subsets have already been examined.
\item the second identity is $p(x,y,y) \approx q(x,y,x)$, i.e. we have the identities:\begin{equation}
\left\{
\begin{array}{c}
x \approx q(x,x,y) \\
p(x,y,y) \approx q(x,y,x)
\end{array} \right.\label{cb}
\end{equation}
This system obviously allows both $p$ and $q$ to be defined as projection maps, so we need to add more identities from \eqref{bh}. This can be done in the following ways: we can equalize the term $q(y,x,x)$ with the terms of each of the identities, or each of the terms $p(x,y,x)$, $p(x,x,y)$ with the terms of the second identity, or we can add one of the identities $q(y,x,x) \approx p(x,y,x)$, $q(y,x,x) \approx p(x,x,y)$, $p(x,y,x) \approx p(x,x,y)$ to the existing two. As before we shall analyze each case:
\begin{itemize}
\item if we equalize the term $q(y,x,x)$ with the terms of the  first identity of the system \eqref{cb} we shall obtain the following: \begin{equation}
\left\{
\begin{array}{c}
x \approx q(x,x,y)\approx q(y,x,x) \\
p(x,y,y) \approx q(x,y,x)
\end{array} \right.\label{cc}
\end{equation}
This system allows both projection maps, so more identities from \eqref{bh} are needed. Let us discuss each way of adding an identity:  

If we equalize the term $p(x,y,x)$ with the terms of the second identity of the system \eqref{cc}, we shall obtain the following:\begin{equation*}
\left\{
\begin{array}{c}
x \approx q(x,x,y)\approx q(y,x,x) \\
p(x,y,y) \approx q(x,y,x) \approx p(x,y,x)
\end{array} \right.
\end{equation*}
This system still allows both terms to be defined as projection maps, so we need to add more identities from \eqref{bh}, but it only makes sense to equalize the term $p(x,x,y)$ with the terms  of the second identity, i.e. we have the following system:\begin{equation*}
\left\{
\begin{array}{c}
x \approx q(x,x,y)\approx q(y,x,x) \\
p(x,y,y) \approx q(x,y,x) \approx p(x,y,x)\approx p(x,x,y)
\end{array} \right.
\end{equation*}
It is easily seen that this system does not hold in algebra $\mathbf{B}$ (example 1, section 2).

If we equalize the term $p(x,x,y)$ with the terms of the second identity of the system \eqref{cc} we shall obtain the following:\begin{equation*}
\left\{
\begin{array}{c}
x \approx q(x,x,y)\approx q(y,x,x) \\
p(x,y,y) \approx q(x,y,x) \approx p(x,x,y)
\end{array} \right.
\end{equation*}
This system allows both projection maps in any algebra, so we need more identities from \eqref{bh}, but it only makes sense to equalize the term $p(x,y,x)$ with the terms of the second identity (for the reasons explained at the beginning of the current subsection). So, we shall obtain the system: \begin{equation*}
\left\{
\begin{array}{c}
x \approx q(x,x,y)\approx q(y,x,x) \\
p(x,y,y) \approx q(x,y,x) \approx p(x,x,y) \approx p(x,y,x)
\end{array} \right.
\end{equation*}
This is the same system as the one above, and it does not hold in algebra $\mathbf{B}$.

If we add the identity $p(x,y,x) \approx p(x,x,y)$ to the system \eqref{cc} we shall obtain the following: \begin{equation*}
\left\{
\begin{array}{c}
x \approx q(x,x,y)\approx q(y,x,x) \\
p(x,y,y) \approx q(x,y,x)\\
p(x,y,x) \approx p(x,x,y)
\end{array} \right. 
\end{equation*}
This system holds in a full idempotent reduct of a module over $\mathbb{Z}_5$, for we can define  $p$ and $q$ to be $3y+3z$, $\pi_2$ respectively in this reduct. We need to add more identities from the system \eqref{bh} here, but as we have explained at the beginning of this subsection, it might only be useful to equalize the terms of the second and the third identity. Therefore, we can obtain the system: \begin{equation*}
\left\{
\begin{array}{c}
x \approx q(x,x,y)\approx q(y,x,x) \\
p(x,y,y) \approx q(x,y,x) \approx
p(x,y,x) \approx p(x,x,y)
\end{array} \right. 
\end{equation*} 
This does not hold in algebra $\mathbf{B}$.
\noindent By this we have finished analyzing the system \eqref{cc}. By adding identities to this system, we can obtain one of the following: a system that holds in some full idempotent reduct of a module over a finite ring or a system that does not hold in $\mathbf{B}$.
\item if we equalize the term $q(y,x,x)$ with the terms of the  second identity of the system \eqref{cb}, we shall obtain the following: \begin{equation}
\left\{
\begin{array}{c}
x \approx q(x,x,y) \\
p(x,y,y) \approx q(x,y,x) \approx q(y,x,x)
\end{array} \right.\label{cd}
\end{equation}
This system holds in a full idempotent reduct of a module over $\mathbb{Z}_5$, for we can define  both $p$ and $q$ to be $3x+3y$ in this reduct. We need to add more identities from \eqref{bh}, so let us analyze each case:

If we equalize the term $p(x,y,x)$ with the terms of the  second identity of the system \eqref{cd}, we shall obtain the following: \begin{equation*}
\left\{
\begin{array}{c}
x \approx q(x,x,y) \\
p(x,y,y) \approx q(x,y,x) \approx q(y,x,x) \approx p(x,y,x)
\end{array} \right.
\end{equation*}
This system holds in a full idempotent reduct of a module over $\mathbb{Z}_5$, for we can define  both $p$ and $q$ to be $3x+3y$ in this reduct. Adding more identities from \eqref{bh} can only mean equalizing the term $p(x,x,y)$ with the terms of the second identity (as already explained), i.e. we would obtain the following system: \begin{equation*}
\left\{
\begin{array}{c}
x \approx q(x,x,y) \\
p(x,y,y) \approx q(x,y,x) \approx q(y,x,x) \approx p(x,y,x) \approx p(x,x,y)
\end{array} \right.
\end{equation*} 
We have already discussed this system, it is denoted by \eqref{bm} above (it  properly contains the system \eqref{bi}, up to a permutation of variables).

If we equalize the term $p(x,x,y)$ with the terms of the second identity of the system \eqref{cd},  we shall obtain the following:\begin{equation*}
\left\{
\begin{array}{c}
x \approx q(x,x,y) \\
p(x,y,y) \approx q(x,y,x) \approx q(y,x,x) \approx p(x,x,y)
\end{array} \right.
\end{equation*}
This system holds in a full idempotent reduct of a module over $\mathbb{Z}_5$, for we can define   $p$ and $q$ to be $3x+3z$, $3x+3y$ respectively in this reduct. Adding more identities from \eqref{bh} can only mean equalizing the term $p(x,y,x)$ with the terms of the second identity (as already explained), i.e. we would obtain the following system: \begin{equation*}
\left\{
\begin{array}{c}
x \approx q(x,x,y) \\
p(x,y,y) \approx q(x,y,x) \approx q(y,x,x) \approx p(x,x,y) \approx p(x,y,x)
\end{array} \right.
\end{equation*}
We have already discussed this system, it is denoted by \eqref{bm} above (it  properly contains the system \eqref{bi}, up to a permutation of variables).

If we add the identity  $p(x,y,x) \approx p(x,x,y)$ to the system \eqref{cd}, we shall obtain the following: \begin{equation*}
\left\{
\begin{array}{c}
x \approx q(x,x,y) \\
p(x,y,y) \approx q(x,y,x) \approx q(y,x,x)\\
p(x,y,x) \approx p(x,x,y)
\end{array} \right.
\end{equation*}
This system holds in a full idempotent reduct of a module over $\mathbb{Z}_3$, for we can define   $p$ and $q$ to be $2x+y+z$, $2x+2y$ respectively in this reduct. We need to add more identities from \eqref{bh} here, but it only make sense to equalize the terms of the second and the third identity, which would give us the system \eqref{bm} again.
\noindent By this we have finished analyzing the system \eqref{cd}. By adding identities to this system, we can obtain one of the following: a system that holds in some full idempotent reduct of a module over a finite ring or a system that properly contains the system \eqref{bi}.
\item if we equalize the term $p(x,y,x)$ with the terms of the  second identity of the system \eqref{cb},  we shall obtain the following: \begin{equation}
\left\{
\begin{array}{c}
x \approx q(x,x,y) \\
p(x,y,y) \approx q(x,y,x)\approx p(x,y,x)
\end{array} \right.\label{ce}
\end{equation}
This system allows both $p$ and $q$ to be projection maps, so we need to add more identities form \eqref{bh}. Let us analyze each case:

If we equalize the term $q(y,x,x)$ with the terms of the first identity of \eqref{ce}, we shall obtain the following: \begin{equation*}
\left\{
\begin{array}{c}
x \approx q(x,x,y) \approx  q(y,x,x) \\
p(x,y,y) \approx q(x,y,x)\approx p(x,y,x)
\end{array} \right.
\end{equation*}
This system allows both projection maps, so more identities from \eqref{bh} are needed. We can only equalize the term $p(x,x,y)$ with the terms of the second identity, and we shall obtain the following system:\begin{equation*}
\left\{
\begin{array}{c}
x \approx q(x,x,y) \approx  q(y,x,x) \\
p(x,y,y) \approx q(x,y,x)\approx p(x,y,x) \approx p(x,x,y)
\end{array} \right.
\end{equation*}
This system does not hold in algebra $\mathbf{B}$.

If we equalize the term $q(y,x,x)$ with the terms of the  second identity of \eqref{ce}, we shall obtain the following: \begin{equation*}
\left\{
\begin{array}{c}
x \approx q(x,x,y) \\
p(x,y,y) \approx q(x,y,x)\approx p(x,y,x) \approx q(y,x,x)
\end{array} \right.
\end{equation*}
This system has already been discussed above, it gives us nothing new.

If we equalize the term $p(x,x,y)$ with the terms of the second identity of \eqref{ce}, we shall obtain the following: \begin{equation*}
\left\{
\begin{array}{c}
x \approx q(x,x,y) \\
p(x,y,y) \approx q(x,y,x)\approx p(x,y,x) \approx p(x,x,y)
\end{array} \right. \label{cf}
\end{equation*}
This system allows both projection maps, so we need to add more identities from \eqref{bh}. This can be done by equalizing the term $q(y,x,x)$ with terms of either of the identities, so let us discuss both cases:

If we equalize the term $q(y,x,x)$ with the terms of the  first identity  of \eqref{cf},  we shall obtain the following system:\begin{equation*}
\left\{
\begin{array}{c}
x \approx q(x,x,y) \approx q(y,x,x) \\
p(x,y,y) \approx q(x,y,x)\approx p(x,y,x) \approx p(x,x,y)
\end{array} \right.
\end{equation*}
This system does not hold in algebra $\mathbf{B}$.

If we equalize the term $q(y,x,x)$ with the terms of the second identity  of \eqref{cf},  we shall obtain the following system:\begin{equation*}
\left\{
\begin{array}{c}
x \approx q(x,x,y) \\
p(x,y,y) \approx q(x,y,x)\approx p(x,y,x) \approx p(x,x,y)  \approx q(y,x,x)
\end{array} \right.
\end{equation*}
We have already discussed this system, it is denoted by \eqref{bm} above (it  properly contains the system \eqref{bi}, up to a permutation of variables).

If we add the identity $q(y,x,x) \approx p(x,x,y)$ to the system \eqref{ce}, we shall obtain the following: \begin{equation*}
\left\{
\begin{array}{c}
x \approx q(x,x,y) \\
p(x,y,y) \approx q(x,y,x)\approx p(x,y,x)\\
q(y,x,x) \approx p(x,x,y)
\end{array} \right.
\end{equation*}
This system allows both projection maps, and adding identities from \eqref{bh} can only be done by equalizing the terms of the  second and the third identity, which would give us the system \eqref{bm}.
\noindent By this we have finished analyzing the system \eqref{ce}. By adding identities to this system, we can obtain one of the following: a system that holds in some full idempotent reduct of a module over a finite ring, a system that does not hold in algebra $\mathbf{B}$, or a system that properly contains the system \eqref{bi}.
\item if we equalize the term $p(x,x,y)$ with the terms of the  second identity of the system \eqref{cb},  we shall obtain the following:\begin{equation}
\left\{
\begin{array}{c}
x \approx q(x,x,y) \\
p(x,y,y) \approx q(x,y,x) \approx p(x,x,y)
\end{array} \right.\label{cg}
\end{equation}
This system allows both $p$ and $q$ to be projection maps, so we need to add more identities from \eqref{bh}. Let us analyze each case:

If we equalize the term $q(y,x,x)$ with the terms of the  first identity of the system \eqref{cg},  we shall obtain the following:\begin{equation*}
\left\{
\begin{array}{c}
x \approx q(x,x,y) \approx q(y,x,x) \\
p(x,y,y) \approx q(x,y,x) \approx p(x,x,y)
\end{array} \right.
\end{equation*} 
This system still allows both projection maps, so more identities from \eqref{bh} are needed. This can by equalizing the term $p(x,y,x)$ with the terms of the second identity, so we obtain the following: \begin{equation*}
\left\{
\begin{array}{c}
x \approx q(x,x,y) \approx q(y,x,x) \\
p(x,y,y) \approx q(x,y,x) \approx p(x,x,y) \approx p(x,y,x)
\end{array} \right.
\end{equation*}
This system does not hold in algebra $\mathbf{B}$.

If we equalize the term $q(y,x,x)$ with the terms of the second identity of the system \eqref{cg} , we shall obtain the following:\begin{equation*}
\left\{
\begin{array}{c}
x \approx q(x,x,y)  \\
p(x,y,y) \approx q(x,y,x) \approx p(x,x,y) \approx q(y,x,x)
\end{array} \right.
\end{equation*}
This system holds in a full idempotent reduct of a module over $\mathbb{Z}_3$, for we can define   $p$ and $q$ to be $3x+3z$, $3x+3y$ respectively in this reduct. By adding identities from the system \eqref{bh} we can only obtain the system \eqref{bm}.

If we equalize the term $p(x,y,x)$ with the terms of the second identity of the system \eqref{cg},  we shall obtain the following: \begin{equation*}
\left\{
\begin{array}{c}
x \approx q(x,x,y) \\
p(x,y,y) \approx q(x,y,x) \approx p(x,x,y) \approx p(x,y,x)
\end{array} \right.
\end{equation*}
This system has already been discussed (is denoted by \eqref{cf} above), and it can give us nothing new.

If we add the identity $p(x,y,x) \approx q(y,x,x)$ to the system \eqref{cg} we shall obtain the following:\begin{equation*}
\left\{
\begin{array}{c}
x \approx q(x,x,y) \\
p(x,y,y) \approx q(x,y,x) \approx p(x,x,y)\\
p(x,y,x) \approx q(y,x,x)
\end{array} \right.
\end{equation*}
This system allows both $p$ and $q$ to be defined as projection maps in any algebra, and adding identities from \eqref{bh} can only give us the system \eqref{bm}.
\noindent By this we have finished analyzing the system \eqref{cg}. By adding identities to this system, we can obtain one of the following: a system that holds in some full idempotent reduct of a module over a finite ring, a system that does not hold in algebra $\mathbf{B}$, or a system that properly contains the system \eqref{bi}.
\item if we add the identity $q(y,x,x) \approx p(x,y,x)$ to the system \eqref{cb} we shall obtain the following:\begin{equation}
\left\{
\begin{array}{c}
x \approx q(x,x,y) \\
p(x,y,y) \approx q(x,y,x)\\
q(y,x,x) \approx p(x,y,x)
\end{array} \right.\label{ch}
\end{equation}
This system holds in a full idempotent reduct of a module over $\mathbb{Z}_5$, for we can define   both $p$ and $q$ to be $3x+3y$ in this reduct. We need to add more identities from \eqref{bh} here, so let us go through the cases: 

If we equalize the terms of the second and the third identity of the system \eqref{ch},  we shall obtain the following:\begin{equation*}
\left\{
\begin{array}{c}
x \approx q(x,x,y) \\
p(x,y,y) \approx q(x,y,x)\approx
q(y,x,x) \approx p(x,y,x)
\end{array} \right.
\end{equation*}
This system holds in a full idempotent reduct of a module over $\mathbb{Z}_5$, for we can define   both $p$ and $q$ to be $3x+3y$ in this reduct. Adding identities from \eqref{bh} can only give us the system \eqref{bm}.

If we equalize the term $p(x,x,y)$ with the terms of the  second identity of the system \eqref{ch}, we shall obtain the following: \begin{equation*}
\left\{
\begin{array}{c}
x \approx q(x,x,y) \\
p(x,y,y) \approx q(x,y,x) \approx p(x,x,y)\\
q(y,x,x) \approx p(x,y,x)
\end{array} \right.
\end{equation*}
Both $p$ and $q$ can be defined as projection maps in any algebra, and adding identities from \eqref{bh} can only be done by equalizing the terms of the second and the third identity, which gives us the system \eqref{bm}.

If we equalize the term $p(x,x,y)$ with the terms of the  third identity of the system \eqref{ch}, we shall obtain the following: \begin{equation*}
\left\{
\begin{array}{c}
x \approx q(x,x,y) \\
p(x,y,y) \approx q(x,y,x)\\
q(y,x,x) \approx p(x,y,x) \approx p(x,x,y)
\end{array} \right.
\end{equation*}
This system holds in a full idempotent reduct of a module over $\mathbb{Z}_5$, for we can define   $p$ and $q$ to be $2x+2y+2z$, $2x+4y$ respectively in this reduct. Adding more identities from \eqref{bh} can only be done by equalizing the terms of the  second and the third identity, which would give us the system \eqref{bm}.
\noindent By this we have finished analyzing the system \eqref{ch}. By adding identities to this system, we can obtain one of the following: a system that holds in some full idempotent reduct of a module over a finite ring, or a system that properly contains the system \eqref{bi}.
\item if we add the identity $q(y,x,x) \approx p(x,x,y)$ to the system \eqref{cb} we shall obtain the following:\begin{equation}
\left\{
\begin{array}{c}
x \approx q(x,x,y) \\
p(x,y,y) \approx q(x,y,x)\\
q(y,x,x) \approx p(x,x,y)
\end{array} \right.\label{ci}
\end{equation}
This system holds in a full idempotent reduct of a module over $\mathbb{Z}_5$, for we can define   $p$ and $q$ to be $3x+3z$, $3x+3y$ respectively in this reduct. We need to add more identities from \eqref{bh}, so let us go through the cases:

If we equalize the terms of the  second and the third identity of the system \eqref{ci},  we shall obtain the following:\begin{equation*}
\left\{
\begin{array}{c}
x \approx q(x,x,y) \\
p(x,y,y) \approx q(x,y,x)\approx
q(y,x,x) \approx p(x,x,y)
\end{array} \right.
\end{equation*}
This system holds in a full idempotent reduct of a module over $\mathbb{Z}_5$, for we can define   $p$ and $q$ to be $3x+3z$, $3x+3y$ respectively in this reduct. We need to add more identities from \eqref{bh}, but this can only be done by equalizing the term $p(x,y,x)$ with the  terms of the  second identity, which would give us the system \eqref{bm}, already discussed above.

If we equalize the term $p(x,y,x)$ with the terms of the  second identity of \eqref{ci}, we shall obtain the following:\begin{equation*}
\left\{
\begin{array}{c}
x \approx q(x,x,y) \\
p(x,y,y) \approx q(x,y,x) \approx p(x,y,x)\\
q(y,x,x) \approx p(x,x,y)
\end{array} \right.
\end{equation*}
This system allows both $p$ and $q$ to be defined as projection maps, and adding identities from \eqref{bh} can only give us the system \eqref{bm}.

If we equalize the term $p(x,y,x)$ with the terms of the third identity of \eqref{ci}, we shall obtain the following:\begin{equation*}
\left\{
\begin{array}{c}
x \approx q(x,x,y) \\
p(x,y,y) \approx q(x,y,x) \\
q(y,x,x) \approx p(x,x,y) \approx p(x,y,x)
\end{array} \right.
\end{equation*}
This system holds in a full idempotent reduct of a module over $\mathbb{Z}_5$, for we can define   $p$ and $q$ to be $2x+2y+2z$, $2x+4y$ respectively in this reduct. Adding more identities from \eqref{bh} can only give us the system \eqref{bm}.
\noindent By this we have finished analyzing the system \eqref{ci}. By adding identities to this system, we can obtain one of the following: a system that holds in some full idempotent reduct of a module over a finite ring, or a system that properly contains the system \eqref{bi}.
\item if we add the identity $p(x,y,x) \approx p(x,x,y)$ to the system \eqref{cb} we shall obtain the following:\begin{equation}
\left\{
\begin{array}{c}
x \approx q(x,x,y) \\
p(x,y,y) \approx q(x,y,x)\\
p(x,y,x) \approx p(x,x,y)
\end{array} \right.\label{cj}
\end{equation}
This system allows both $p$ and $q$ to be defined as projection maps in any algebra, so we need to add more identities from the system \eqref{bh}. Let us analyze each case: 

If we equalize the  terms of the second and the third identity of the system \eqref{cj},  we shall obtain the following system:\begin{equation}
\left\{
\begin{array}{c}
x \approx q(x,x,y) \\
p(x,y,y) \approx q(x,y,x) \approx
p(x,y,x) \approx p(x,x,y)
\end{array}\right.\label{ck} 
\end{equation}
Still both terms can be defined as projection maps, so we need to add more identities from \eqref{bh}. Let us analyze each case: 

If we equalize the term $q(y,x,x)$ with the terms of the  first identity of the system \eqref{ck},  we shall obtain the following:\begin{equation*}
\left\{
\begin{array}{c}
x \approx q(x,x,y) \approx q(y,x,x) \\
p(x,y,y) \approx q(x,y,x) \approx
p(x,y,x) \approx p(x,x,y)
\end{array}\right.
\end{equation*}
This system does not hold in algebra $\mathbf{B}$.

If we equalize the term $q(y,x,x)$ with the terms of the  second identity of the system \eqref{ck},  we shall obtain the following: \begin{equation*}
\left\{
\begin{array}{c}
x \approx q(x,x,y)  \\
p(x,y,y) \approx q(x,y,x) \approx
p(x,y,x) \approx p(x,x,y) \approx q(y,x,x)
\end{array}\right.
\end{equation*}
This is the system \eqref{bm} which has already been discussed above. 

If we equalize the term $q(y,x,x)$ with the terms of the  first identity of the system \eqref{cj},  we shall obtain the following:\begin{equation*}
\left\{
\begin{array}{c}
x \approx q(x,x,y) \approx q(y,x,x) \\
p(x,y,y) \approx q(x,y,x)\\
p(x,y,x) \approx p(x,x,y)
\end{array} \right.
\end{equation*}
This system holds in a full idempotent reduct of a module over $\mathbb{Z}_5$, for we can define   $p$ and $q$ to be $3y+3z$, $\pi_2$ respectively in this reduct. Adding more identities from \eqref{bh} means equalizing the terms of the  third and the second identity, so we obtain the following: \begin{equation*}
\left\{
\begin{array}{c}
x \approx q(x,x,y) \approx q(y,x,x) \\
p(x,y,y) \approx q(x,y,x) \approx
p(x,y,x) \approx p(x,x,y)
\end{array} \right.
\end{equation*}
This system does not hold in algebra $\mathbf{B}$.

If we equalize the term $q(y,x,x)$ with the terms of the  second identity of the system \eqref{cj},  we shall obtain the following:\begin{equation*}
\left\{
\begin{array}{c}
x \approx q(x,x,y)  \\
p(x,y,y) \approx q(x,y,x) \approx q(y,x,x)\\
p(x,y,x) \approx p(x,x,y)
\end{array} \right.
\end{equation*}
This system holds in a full idempotent reduct of a module over $\mathbb{Z}_3$, for we can define   $p$ and $q$ to be $2x+y+z$, $2x+2y$ respectively in this reduct. Adding more identities from the system \eqref{bh} can only mean equalizing the terms of the second and the third identity, but that gives us the system \eqref{bm}, which is already discussed. 

If we equalize the term $q(y,x,x)$ with the terms of the third identity of the system \eqref{cj},  we shall obtain the following:\begin{equation*}
\left\{
\begin{array}{c}
x \approx q(x,x,y)  \\
p(x,y,y) \approx q(x,y,x) \\
p(x,y,x) \approx p(x,x,y) \approx q(y,x,x)
\end{array} \right.
\end{equation*}
This system holds in a full idempotent reduct of a module over $\mathbb{Z}_5$, for we can define   $p$ and $q$ to be $2x+2y+2z$, $2x+4y$ respectively in this reduct. Adding more identities from the system \eqref{bh} can only give us the system \eqref{bm}. 
\noindent By this we have finished analyzing the system \eqref{cj}. By adding identities to this system, we can obtain one of the following: a system that holds in some full idempotent reduct of a module over a finite ring, or a system that does not hold in algebra $\mathbf{B}$, or a system that properly contains the system \eqref{bi}.
\end{itemize}

\vspace{0.2 cm} 

\noindent By this we have examined all proper subsets of the system \eqref{bh} with the first and the second identity being \eqref{cb}. Only the system \eqref{bi} (and some systems properly containing this one, like the system \eqref{bm}) implies omitting types 1 and 2. 
\item the second identity is $p(x,y,y) \approx q(y,x,x)$, i.e. we have the system: 
\begin{equation}
\left\{
\begin{array}{c}
x \approx q(x,x,y) \\
p(x,y,y) \approx q(y,x,x)
\end{array} \right.\label{cl}
\end{equation}
There is no need to examine subsets of the system \eqref{bh} that include identities \eqref{cl}, for if we substitute $q(x,y,z)$ for $q(y,x,z)$ in such a subset, we shall obtain a subset of \eqref{bh} that includes the identities \eqref{cb}, and all these subsets are already examined in the previous item. 
\end{itemize}

\vspace{0.3 cm}

\noindent By this we have examined all the subsets of the system \eqref{bh} with the first identity being $x \approx q(x,x,y)$. Only the system \eqref{bi} (and some systems properly containing this one, like the system \eqref{bm}) implies omitting types 1 and 2. 

\vspace{0.3 cm} 

\noindent As for the subsets of \eqref{bh} with the first identity being $x \approx q(x,y,x)$ (or $x \approx q(y,x,x)$) -- there is no need to examine any of them, for by substituting $q(x,y,z)$ for $q(x,z,y)$ (or $q(z,y,x)$) we obtain subsets of \eqref{bh} with the first identity being $x \approx q(x,x,y)$, which are already examined.

\vspace{0.3 cm}

\noindent This means we have proved the following: from the system \eqref{bh} we can obtain only two systems of identities (up to a permutation of variables, of course) that imply and possibly describe omitting types 1 and 2 -- these are the systems \eqref{dva} and \eqref{bi} (we can also obtain some systems properly containing these two, and implying omitting types 1 and 2, but we are interested only in minimal systems having this property). 

\section {} \label{sedma}  

In this section we shall prove the following (this proof is excluded from the subsection \ref{case3}):

\vspace{0.5 cm}

\noindent from the system \begin{equation}
\begin{array}{r}
x\approx p(x,x,y)\approx p(x,y,x)\approx p(y,x,x)\approx q(y,x,x)\approx q(x,y,x)\approx q(x,x,y)
\end{array}\label{cm} 
\end{equation} 
(also denoted by (6) in the subsection \ref{case3}), we can obtain only a single system implying and possibly describing omitting types 1 and 2, and that is the following system: \begin{equation}
\left\{
\begin{array}{r}
x\approx p(x,x,y)\\
p(x,y,x)\approx p(y,x,x)\approx q(y,x,x)\approx q(x,y,x)\approx q(x,x,y)
\label{cn}
\end{array}\right.
\end{equation}
(denoted by (7) in the subsection \ref{case3}).

\noindent First we shall prove the minimality of the system \eqref{cn}, so let us notice the following facts:
\begin{enumerate}
\item if we omit the identity $x\approx p(x,x,y)$ from the system \eqref{cn}, both terms can be defined as $2x+2y+2z$ in a full idempotent reduct of a module over $\mathbb{Z}_5$. Therefore this identity stays.
\item if there is no identity on both $p$ and $q$ in a subset of \eqref{cn}, again both terms exist in a full idempotent reduct of a module over $\mathbb{Z}_5$, for we can define $p$ , $q$ to be $3x+3y$, $2x+2y+2z$ respectively in this reduct. Therefore any subset of \eqref{cn} possibly describing omitting types 1 and 2 has to have at least one identity on $p$ and $q$.
\item if term $q$ occurs only once in a subset of \eqref{cn}, and that is in the identity on $p$ and $q$, then both terms exist in a full idempotent reduct of a module over $\mathbb{Z}_5$, for we can define $p$ to be $3x+3y$ and $q$ to be one of the binary terms $3x+3y$, $3x+3z$, $3y+3z$. This means $q$ has to occur in at least two identities -- one on $p$ and $q$ and the other only on $q$ or also on $p$ and $q$.  
\end{enumerate}

\noindent So, we shall analyze subsets of the system \eqref{cn} with the first identity being $x\approx p(x,x,y)$. As before we shall vary the second identity:
\begin{itemize}
\item the second identity is $p(x,y,x) \approx q(y,x,x)$, i.e. we have the following identities:\begin{equation}
\left\{
\begin{array}{r}
x\approx p(x,x,y)\\
p(x,y,x) \approx q(y,x,x)
\label{co}
\end{array}\right.
\end{equation}
Obviously both terms can be defined as projection  maps in any algebra, so we need more identities from the system \eqref{cn}. As said before, the term $q$ has to occur in at least two identities, so we have several possibilities here: we can equalize each of the terms $q(x,y,x)$, $q(x,x,y)$ with the terms of the second identity, or we can add one of the identities $q(x,y,x) \approx q(x,x,y)$, $p(y,x,x) \approx q(x,y,x)$, $p(y,x,x) \approx q(x,x,y)$  to the system \eqref{co}. Let us discuss each of these cases:
\begin{itemize}
\item if we equalize the term $q(x,y,x)$ with the terms of the  second identity of the system \eqref{co}, we shall obtain the following system:\begin{equation}
\left\{
\begin{array}{r}
x\approx p(x,x,y)\\
p(x,y,x) \approx q(y,x,x) \approx q(x,y,x)
\label{cp}
\end{array}\right.
\end{equation}
This system holds in a full idempotent reduct of a module over $\mathbb{Z}_5$, for we can define both $p$ and $q$ to be $3x+3y$ in this reduct. We need to add more identities from the system \eqref{cn}, so let us go through the cases:

If we equalize the term $p(y,x,x)$ with the terms of the second identity of the system \eqref{cp},  we shall obtain the following:\begin{equation*}
\left\{
\begin{array}{r}
x\approx p(x,x,y)\\
p(x,y,x) \approx q(y,x,x) \approx q(x,y,x) \approx p(y,x,x)
\end{array}\right.
\end{equation*}
Still both terms can be defined as $3x+3y$ in a full idempotent reduct of a module over $\mathbb{Z}_5$, and by adding identities from \eqref{cn} we can only obtain the whole system \eqref{cn}.

If we equalize the term $q(x,x,y)$ with the terms of the  second identity of the system \eqref{cp},  we shall obtain the following: \begin{equation*}
\left\{
\begin{array}{r}
x\approx p(x,x,y)\\
p(x,y,x) \approx q(y,x,x) \approx q(x,y,x) \approx q(x,x,y)
\end{array}\right.
\end{equation*}
This system holds in a full idempotent reduct of a module over $\mathbb{Z}_5$, for we can define   $p$ and $q$ to be $4x+2y$, $2x+2y+2z$ respectively in this reduct. Adding identities from \eqref{cn} can only give us the whole system \eqref{cn}.

If we add the identity $p(y,x,x) \approx q(x,x,y)$ to the system \eqref{cp}, we  shall obtain the following:\begin{equation*}
\left\{
\begin{array}{r}
x\approx p(x,x,y)\\
p(x,y,x) \approx q(y,x,x) \approx q(x,y,x)\\
p(y,x,x) \approx q(x,x,y)
\end{array}\right.
\end{equation*}
This system allows both $p$ and $q$ to be defined as projection maps in any algebra, so we need more identities from the system \eqref{cn}, but that can only be done by equalizing the terms of the  second and the third identity which gives us the whole system \eqref{cn}.
\noindent By this we have finished analyzing the system \eqref{cp}. By adding identities to this system, we can obtain one of the following: a system that holds in some full idempotent reduct of a module over a finite ring, or the system \eqref{cn}.
\item if we equalize the term $q(x,x,y)$ with the terms of the second identity of the system \eqref{co}, we shall obtain the following system:
\begin{equation}
\left\{
\begin{array}{r}
x\approx p(x,x,y)\\
p(x,y,x) \approx q(y,x,x) \approx q(x,x,y)
\label{cr}
\end{array}\right.
\end{equation}
This system holds in a full idempotent reduct of a module over $\mathbb{Z}_5$, for we can define   $p$ and $q$ to be $3x+3y$, $3x+3z$ respectively in this reduct. We need to add more identities from the system \eqref{cn}, so let us go through the cases:

If we equalize the term $p(y,x,x)$ with the terms of the second identity of the system  \eqref{cr},  we shall obtain the following:\begin{equation*}
\left\{
\begin{array}{r}
x\approx p(x,x,y)\\
p(x,y,x) \approx q(y,x,x) \approx q(x,x,y)\approx p(y,x,x)
\end{array}\right.
\end{equation*} 
This system  still holds in a full idempotent reduct of a module over $\mathbb{Z}_5$, for we can define  $p$ and $q$ to be $3x+3y$, $3x+3z$ respectively in this reduct. Adding more identities from the system \eqref{cn} can only give us the whole system \eqref{cn}.

If we equalize the term $q(x,y,x)$ with the terms of the second identity of the system \eqref{cr},  we shall obtain the following system:\begin{equation*}
\left\{
\begin{array}{r}
x\approx p(x,x,y)\\
p(x,y,x) \approx q(y,x,x) \approx q(x,x,y)\approx q(x,y,x)
\end{array}\right.
\end{equation*} 
This system holds in a full idempotent reduct of a module over $\mathbb{Z}_5$, for we can define   $p$ and $q$ to be $4x+2y$, $2x+2y+2z$ respectively in this reduct. Adding more identities from the system \eqref{cn} can only give us the whole system \eqref{cn}.

If we add the identity $p(y,x,x) \approx q(x,y,x)$ to the system \eqref{cr} we shall obtain the following system:\begin{equation*}
\left\{
\begin{array}{r}
x\approx p(x,x,y)\\
p(x,y,x) \approx q(y,x,x) \approx q(x,x,y)\\
p(y,x,x) \approx q(x,y,x)
\end{array}\right.
\end{equation*}
This system allows both terms to be defined as projection maps, and adding identities from the system \eqref{cn} can only give us the whole system \eqref{cn}.
\noindent By this we have finished analyzing the system \eqref{cr}. By adding identities to this system, we can obtain one of the following: a system that holds in some full idempotent reduct of a module over a finite ring, or the whole system \eqref{cn}.
\item if we add the identity $q(x,y,x) \approx q(x,x,y)$ to the system \eqref{co}, we shall obtain the following system:\begin{equation}
\left\{
\begin{array}{r}
x\approx p(x,x,y)\\
p(x,y,x) \approx q(y,x,x)\\
q(x,y,x) \approx q(x,x,y)
\label{cq}
\end{array}\right.
\end{equation}
This system holds in a full idempotent reduct of a module over $\mathbb{Z}_5$, for we can define   $p$ and $q$ to be $\pi_1$, $3y+3z$ respectively in this reduct. We need to add more identities from the system \eqref{cn}, so let us analyze all the cases:

If we equalize the terms of the  second and the third identity of the system \eqref{cq}, we shall obtain the following system:\begin{equation*}
\left\{
\begin{array}{r}
x\approx p(x,x,y)\\
p(x,y,x) \approx q(y,x,x) \approx
q(x,y,x) \approx q(x,x,y)
\end{array}\right.
\end{equation*}
This system holds in a full idempotent reduct of a module over $\mathbb{Z}_5$, for we can define   $p$ and $q$ to be $4x+2y$, $2x+2y+2z$ respectively in this reduct. Adding identities from the system \eqref{cn} can only give us the whole system \eqref{cn}.

If we equalize the term $p(y,x,x)$ with the terms of the second identity of the system \eqref{cq},  we shall obtain the following:\begin{equation*}
\left\{
\begin{array}{r}
x\approx p(x,x,y)\\
p(x,y,x) \approx q(y,x,x) \approx p(y,x,x)\\
q(x,y,x) \approx q(x,x,y)
\end{array}\right.
\end{equation*}
This system holds in a full idempotent reduct of a module over $\mathbb{Z}_3$, for we can define   $p$ and $q$ to be $2x+2y$, $2x+y+z$ respectively in this reduct. Adding identities from the system \eqref{cn} can only give us the whole system \eqref{cn}.

If we equalize the term $p(y,x,x)$ with the terms of the  third identity of the system \eqref{cq},  we shall obtain the following:\begin{equation*}
\left\{
\begin{array}{r}
x\approx p(x,x,y)\\
p(x,y,x) \approx q(y,x,x)\\
q(x,y,x) \approx q(x,x,y) \approx p(y,x,x)
\end{array}\right.
\end{equation*}
This system allows both $p$ and $q$ to be defined as projection maps in any algebra, and adding identities from \eqref{cn} can only give us \eqref{cn}.
\noindent By this we have finished analyzing the system \eqref{cq}. By adding identities to this system, we can obtain one of the following: a system that holds in some full idempotent reduct of a module over a finite ring, or the whole system \eqref{cn}.
\item if we add the identity $p(y,x,x) \approx q(x,y,x)$ to the system \eqref{co}, we shall obtain the following system:\begin{equation}
\left\{
\begin{array}{r}
x\approx p(x,x,y)\\
p(x,y,x) \approx q(y,x,x)\\
p(y,x,x) \approx q(x,y,x)
\label{cs}
\end{array}\right.
\end{equation}
This system allows both $p$ and $q$ to be defined as projection maps in any algebra, so we need to add more identities from the system \eqref{cn}. Let us analyze all the cases:

If we equalize the terms of the second and the third identity of the system \eqref{cs},  we shall obtain the following: \begin{equation*}
\left\{
\begin{array}{r}
x\approx p(x,x,y)\\
p(x,y,x) \approx q(y,x,x)\approx 
p(y,x,x) \approx q(x,y,x)
\end{array}\right.
\end{equation*}
This system holds in a full idempotent reduct of a module over $\mathbb{Z}_5$, for we can define   both $p$ and $q$ to be $3x+3y$ in this reduct. Adding identities from \eqref{cn} can only give us \eqref{cn}.

If we equalize the term $q(x,x,y)$ with the terms of the  second identity of the system \eqref{cs},  we shall obtain the following:\begin{equation*}
\left\{
\begin{array}{r}
x\approx p(x,x,y)\\
p(x,y,x) \approx q(y,x,x) \approx q(x,x,y)\\
p(y,x,x) \approx q(x,y,x)
\end{array}\right.
\end{equation*}
This system allows both terms to be projection maps, and adding identities from \eqref{cn} can only give us \eqref{cn}.

If we equalize the term $q(x,x,y)$ with the terms of the third identity of the system \eqref{cs},  we shall obtain the following:\begin{equation*}
\left\{
\begin{array}{r}
x\approx p(x,x,y)\\
p(x,y,x) \approx q(y,x,x) \\
p(y,x,x) \approx q(x,y,x) \approx q(x,x,y)
\end{array}\right.
\end{equation*}
This system allows both terms to be projection maps, and adding identities from \eqref{cn} can only give us \eqref{cn}.
\noindent By this we have finished analyzing the system \eqref{cs}. By adding identities to this system, we can obtain one of the following: a system that holds in some full idempotent reduct of a module over a finite ring, or the whole system \eqref{cn}.
\item if we add the identity $p(y,x,x) \approx q(x,x,y)$ to the system \eqref{co}, we shall obtain the following system: \begin{equation}
\left\{
\begin{array}{r}
x\approx p(x,x,y)\\
p(x,y,x) \approx q(y,x,x)\\
p(y,x,x) \approx q(x,x,y)
\label{ct}
\end{array}\right.
\end{equation}
This system allows both terms to be defined as projection maps in any algebra, so more identities from \eqref{cn} are needed. Let us analyze each case:

If we equalize the terms of the  second and the third identity of the system \eqref{ct}, we shall obtain the following: \begin{equation*}
\left\{
\begin{array}{r}
x\approx p(x,x,y)\\
p(x,y,x) \approx q(y,x,x)\approx
p(y,x,x) \approx q(x,x,y)
\end{array}\right.
\end{equation*}
This system holds in a full idempotent reduct of a module over $\mathbb{Z}_5$, for we can define   $p$ and $q$ to be $3x+3y$, $3x+3z$ respectively in this reduct, and adding identities from \eqref{cn} can only give us \eqref{cn}.

If we equalize the term $q(x,y,x)$ with the terms of the second identity of the system \eqref{ct}, we shall obtain the following:\begin{equation*}
\left\{
\begin{array}{r}
x\approx p(x,x,y)\\
p(x,y,x) \approx q(y,x,x) \approx q(x,y,x)\\
p(y,x,x) \approx q(x,x,y)
\end{array}\right.
\end{equation*}
This system allows both terms to be defined as projection maps, and adding identities from \eqref{cn} can only give us \eqref{cn}.

If we equalize the term $q(x,y,x)$ with the terms of the  third identity of the system \eqref{ct}, we shall obtain the following:\begin{equation*}
\left\{
\begin{array}{r}
x\approx p(x,x,y)\\
p(x,y,x) \approx q(y,x,x) \\
p(y,x,x) \approx q(x,x,y) \approx q(x,y,x)
\end{array}\right.
\end{equation*}
This system allows both terms to be defined as projection maps, and adding identities from \eqref{cn} can only give us \eqref{cn}.
\noindent By this we have finished analyzing the system \eqref{ct}. By adding identities to this system, we can obtain one of the following: a system that holds in some full idempotent reduct of a module over a finite ring, or the whole system \eqref{cn}.
\end{itemize}
\item the second identity is one of the following two identities: $p(x,y,x) \approx q(x,y,x)$, $p(x,y,x) \approx q(x,x,y)$ (the first one being  $x \approx p(x,x,y)$, of course) -- there is no need to analyze these subsets of the system \eqref{cn}, for by substituting $q(x,y,z)$ for $q(y,z,x)$ or $q(z,y,x)$ in such a system (i.e. a subset of \eqref{cn}), we obtain an equivalent system, which is also a subset of \eqref{cn} with the second identity being $p(x,y,x) \approx q(y,x,x)$. We have already examined all these systems in the previous item.
\item the second identity is one of the following three identities: $p(y,x,x) \approx q(x,y,x)$, $p(y,x,x) \approx q(y,x,x)$, $p(y,x,x) \approx q(x,x,y)$ -- here holds the same as in the previous item, namely by substituting $p(x,y,z)$ for $p(y,x,z)$ in these systems, we obtain an equivalent system (also a subset of \eqref{cn}) with the second identity having $p(x,y,x)$ on the left. We have already examined or discussed all these systems.
\end{itemize}

\vspace{0.2 cm}

\noindent By this point we have analyzed all the subsets of the system \eqref{cn}, so we can conclude that there is no proper subset of this system that possibly describe omitting types 1 and 2, i.e. the system \eqref{cn} is a minimal one in that respect.

\vspace{0.3 cm}

\noindent Let us now prove that from the system \eqref{cm} we can obtain only a single system, i.e. the system  \eqref{cn}, which implies and possibly describes omitting types 1 and 2 (of course, when we say \emph{a single system} it is actually up to the permutation of variables). 

\vspace{0.2 cm}

\noindent As before, we shall first notice some important facts:
\begin{enumerate}
\item the whole system \eqref{cm} does not hold in algebra $\mathbf{B}$ (example 1, section 2), so we are looking for a proper subset of \eqref{cm}.
\item if $x$ alone does not occur at all in a subset of \eqref{cm}, i.e. all the identities have either $p$ or $q$ on each side, then such a system (i.e. a subset being considered) holds in a full idempotent reduct of a module over $\mathbb{Z}_5$, for we can define both $p$ and $q$ to be $2x+2y+2z$ in this reduct. Therefore we need to have $x$ alone at one side of at least one identity. 
\end{enumerate} 

\vspace{0.2 cm}

\noindent Suppose we have a subset of \eqref{cm}, let us denote it by $\sigma$, with the first identity being $x \approx p(x,x,y)$. We keep in mind that $\sigma$ needs to hold in algebra $\mathbf{B}$. 
\begin{itemize}
\item if the system $\sigma$ does not include an identity on both $p$ and $q$, i.e. all the identities are either on $p$ or on $q$, then there are two possibilities:
\begin{itemize}
\item the system $\sigma$ cannot hold in algebra $\mathbf{B}$ (e.g. it includes the identity $x \approx p(x,x,y) \approx p(x,y,x) \approx p(y,x,x)$); we can exclude this case, for we need a system that holds in $\mathbf{B}$.
\item the system $\sigma$ holds in $\mathbf{B}$, which means $p$ can be defined as either a projection map or a binary term in this algebra. Then $\sigma$ also holds in a full idempotent reduct of a module over $\mathbb{Z}_5$, for we can define $p$ to be either a projection map or one of the terms $3x+3y$, $3x+3z$, $3y+3z$ and $q$ to be the term $2x+2y+2z$ in this reduct.  
\end{itemize}
Therefore we need to have an identity on both $p$ and $q$ in the system $\sigma$.
\item if the system $\sigma$ includes one of the identities $x \approx p(x,x,y) \approx p(x,y,x)$, $x \approx p(x,x,y) \approx p(y,x,x)$ (i.e. from $\sigma$ we can obtain an equivalent system including one of the previous two identities),then in algebra $\mathbf{B}$, $p$ would have to be defined as a projection map. According to the previous item, $\sigma$ includes an identity on both $p$ and $q$, which means $q$ would have to be either a projection map or a binary term in  $\mathbf{B}$. If the system $\sigma$ allows $p$ and $q$ to be defined this way in  $\mathbf{B}$, then $\sigma$ holds in a full idempotent reduct of a module over $\mathbb{Z}_5$, for we can define both terms to be projection maps, or $p$ to be a projection map and $q$ to be one of the terms $3x+3y$, $3x+3z$, $3y+3z$  in this reduct. \\

\vspace{0.1 cm}

\noindent We can conclude now that $\sigma$ cannot include any of the identities $x \approx p(x,x,y) \approx p(x,y,x)$, $x \approx p(x,x,y) \approx p(y,x,x)$.
\item if the system $\sigma$ includes one of the identities $x \approx p(x,x,y) \approx q(x,y,x)$, $x \approx p(x,x,y) \approx q(y,x,x)$, $x \approx p(x,x,y) \approx q(x,x,y)$ (i.e. from $\sigma$ we can obtain an equivalent system including one of the previous two identities), then both $p$ and $q$ would have to be defined as at most binary terms in algebra $\mathbf{B}$. If $\sigma$ allows that, then $p$ and $q$ exist in a full idempotent reduct of a module over $\mathbb{Z}_5$, for we can define them to be some of the terms $\pi_1$, $\pi_2$, $\pi_3$, $3x+3y$, $3x+3z$, $3y+3z$. \\

\vspace{0.1 cm}

\noindent Therefore $\sigma$ must not include any of the identities $x \approx p(x,x,y) \approx q(x,y,x)$, $x \approx p(x,x,y) \approx q(y,x,x)$, $x \approx p(x,x,y) \approx q(x,x,y)$. This conclusion and the conclusion of the previous item imply that neither $x$ alone nor $p(x,x,y)$ may occur in an identity other than the first one in $\sigma$. In other words, in the rest of the system $\sigma$ (i.e. in the identities besides the first one, $x \approx p(x,x,y)$) we may have only the following terms: $p(y,x,x)$, $p(x,y,x)$, $q(y,x,x)$, $q(x,y,x)$, $q(x,x,y)$. This means the system $\sigma$ is actually a subset of the system \eqref{cn}, and we have already proved that no proper subset of this system can describe omitting types 1 and 2. We can now conclude that there is only a single subset of \eqref{cm} with the first identity being $x \approx p(x,x,y)$, that implies and possibly describes omitting types 1 and 2, and this is the system \eqref{cn}.   
\end{itemize}

\vspace{0.3 cm}

\noindent Suppose there is a subset of the system \eqref{cm} with the first identity being $x \approx p(x,y,x)$, which implies and possibly describes omitting types 1 and 2. Let us denote this subset by $\sigma_1$ (so, $\sigma_1$ does not hold in any full idempotent reduct of a module over a finite ring). If we substitute $p(x,y,z)$ for $p(x,z,y)$ in the system $\sigma_1$ we shall obtain an equivalent system which is also a subset of the system \eqref{cm}, but with the first identity being $x \approx p(x,x,y)$. As we have explained, this has to be the system \eqref{cn}. So, $\sigma_1$ is equivalent to the system \eqref{cn}, and can be obtained from \eqref{cn} by a permutation of variables.

\vspace{0.3 cm}

\noindent The similar holds if we consider a subset of \eqref{cm} with the first identity being $x \approx p(y,x,x)$ -- if it implies omitting types 1 and 2, it is equivalent to the system  \eqref{cn}, and can be obtained from \eqref{cn} by a permutation of variables.

\vspace{0.3 cm}

\noindent As for the subsets of the system \eqref{cm} with the first identity being one of the identities $x \approx q(x,x,y)$, $x \approx q(x,y,x)$, $x \approx q(y,x,x)$, we can substitute term $q(x,y,z)$ for the term $p(x,y,z)$ and other way round (i.e. the term $p(x,y,z)$ for the term $q(x,y,z)$) in such a system, and by this obtain a subset of the system \eqref{cm} with the first identity being one of the identities $x \approx p(x,x,y)$, $x \approx p(x,y,x)$, $x \approx p(y,x,x)$. All these subsets are already examined -- we have obtained nothing but the system \eqref{cn} and some systems equivalent to it. This means there is no need to examine subsets of the system \eqref{cm} with the first identity being one of the identities $x \approx q(x,x,y)$, $x \approx q(x,y,x)$, $x \approx q(y,x,x)$, for we can obtain nothing new.

\vspace{0.3 cm}

\noindent By this we have proved that from the system \eqref{cm} we can obtain only a single system, and that is the system \eqref{cn}, that implies and possibly describes omitting types 1 and 2.

\vspace{1 cm}

\end{document}